\newtheorem{theorem}{Theorem}[section]
\newtheorem{proposition}[theorem]{Proposition}
\newtheorem{remark}[theorem]{Remark}
\newtheorem{conjecture}[theorem]{Conjecture}
\newcommand{\doi}[1]{%
\href{http://dx.doi.org/#1}{doi:\nolinkurl{#1}}}
\newcommand{\arxiv}[1]{%
 \href{http://front.math.ucdavis.edu/#1}{ArXiv:#1}}
\newcommand{\mref}[1]{%
\href{http://www.ams.org/mathscinet-getitem?mr=#1}{#1}}
\begin{document}

\title{Nodal domains, spectral minimal partitions and their relation to Aharonov-Bohm operators}

\author{V. Bonnaillie-No\"el\footnote{D\'epartement de Math\'ematiques et Applications, ENS Paris, CNRS, PSL Research University, 45 rue d'Ulm, 75005 Paris, France.
\href{mailto:virginie.bonnaillie@ens.fr}{bonnaillie\@@math.cnrs.fr}}, B. Helffer\footnote{Laboratoire de Math\'ematique Jean Leray  (UMR 6629), Universit\'e de Nantes, CNRS, 2 rue de la Houssini\`ere, BP 92208, F-44322 Nantes cedex 3, France and Laboratoire de Math\'ematiques (UMR 8628), Universit\'e Paris Sud, CNRS, B\^at 425, F-91405 Orsay cedex, France.
\href{mailto:bernard.helffer@univ-nantes.fr}{bernard.helffer\@@univ-nantes.fr}},  and T. Hoffmann-Ostenhof\footnote{Vienna University, Department of Theoretical Chemistry, A 1090 Wien, W\"ahringerstrasse 17, Austria. \href{mailto:thoffmann@tbi.univie.ac.at}{thoffmann\@@tbi.univie.ac.at}}
}
\date{}
\maketitle
\begin{abstract}
This survey is  a short version of a chapter written by the first two authors in the book \cite{HenrotBook} (where more details and references are given) but we have decided here to emphasize more on the role of the Aharonov-Bohm operators which appear to be a useful tool coming from physics for understanding a problem motivated either by spectral geometry or dynamics of population. Similar questions appear also in Bose-Einstein theory. Finally some open problems which might be of interest are mentioned.
\end{abstract}

\section{Introduction}

In this survey, we mainly consider  the Dirichlet realization of the Laplacian operator  in $\Omega$, when $\Omega$ is a bounded domain in $\mathbb R^2$ with piecewise-$C^{1,+}$ boundary (domains with corners or cracks\footnote{For example a square with a segment removed. By $C^{1,+}$, we mean $C^{1,\alpha}$ for some $\alpha >0\,$.} also permitted). This operator will be denoted by $H(\Omega)$. We would like to analyze the  connections  between the nodal domains of the eigenfunctions of $H(\Omega)$ and the partitions of $\Omega$ by $ k$ open sets $ D_i$ which are minimal in the sense that the maximum over the $D_i$'s of the groundstate energy of the Dirichlet realization of the Laplacian $H(D_i)$ is minimal. This problem can be seen as a strong competition limit of segregating species in population dynamics (see \cite{BHIM},  \cite{MR2151234} and references therein). Similar questions appear also in the analysis of the segregation and the symmetry breaking of a two-component condensate in the Bose-Einstein theory (see \cite{MR3148108} and references therein), with $\Omega =\mathbb R^2$ and $H(\Omega)$ replaced by the harmonic oscillator or more generally by a Schr\"odinger operator.

To be more precise,  we start from the following weak notion of partition:\\
{\it A {\bf partition} (or $k$-partition for indicating the cardinality of the partition) of $\Omega$ is a family $\mathcal D=\{D_i\}_{1\leq i\leq k}$ of $k$ mutually disjoint sets in $\Omega$ (with $k\geq1$ an integer).
}\\
If we denote by $\mathfrak O_k(\Omega)$ the set of partitions of $\Omega$ where the $D_i$'s are domains ({\it i.e.} open and connected), we  introduce the energy of the partition:
\begin{equation}\label{chapBH.LaD}
\Lambda(\mathcal D)=\max_{1\leq i\leq k}\lambda(D_i),
\end{equation}
where  $\lambda(D_{i})$ is the ground state energy ({\it i.e.} the lowest eigenvalue) of $H(D_{i})$.
The optimal problem we are interested in is the determination, for any integer $k\geq1\,,$ of
\begin{equation}\label{chapBH.eq.Lkdef}
\mathfrak L_{k}=\mathfrak L_{k}(\Omega) = \inf_{\mathcal D\in\mathfrak O_k(\Omega)} \Lambda(\mathcal D).
\end{equation}
We can also consider the case of a two-dimensional Riemannian manifold and the Laplacian is then the Laplace-Beltrami operator. 
 We say that $(\varphi,\lambda)$ is a spectral pair for $H(\Omega)$ if $\lambda$ is an eigenvalue of the Dirichlet Laplacian $H(\Omega)$ on $\Omega$ and $\varphi\in E(\lambda)\setminus \{0\}$, where $E(\lambda)$ denotes the eigenspace attached to $\lambda$.
We denote by $\{\lambda_n(\Omega),n\geq 1\}$  the non decreasing sequence of eigenvalues of $H(\Omega)$ and by $\{\varphi_n,n\geq 1\}$ some associated orthonormal basis of eigenfunctions.  The groundstate $\varphi_1$ can be chosen to be strictly positive in $\Omega$, but the other excited eigenfunctions $\varphi_n$ must have zerosets. Here we recall that for $\varphi\in C^0(\overline\Omega)$, the nodal set (or zeroset) of $\varphi$ is defined by~:
\begin{equation}
{\mathcal N}(\varphi)=\overline{\{ {\bf x} \in \Omega\:\big|\: \varphi({\bf x})=0\}}\,.
\end{equation}
In the case when $\varphi$ is an eigenfunction of the Laplacian, the $\mu(\varphi)$ components of $\Omega\setminus {\mathcal N}(\varphi)$ are called the nodal domains\index{nodal domains} of $\varphi$ and define naturally a partition of $\Omega$ by $\mu(\varphi)$ open sets, which is called a {\bf nodal partition}.\\
Our main goal is to discuss the links between the partitions of $\Omega$ associated with these eigenfunctions and the minimal partitions of $\Omega$. We will also describe how these minimal partitions can also be seen as  a nodal partition of an eigenfunction of a suitable Aharonov-Bohm operator. There is of course a strong relation between the analysis of nodal sets of eigenfunctions and nodal domains. Here the surveys of S. Zelditch, \cite{MR3087922, MR3524217} show the growing importance of this field. There mostly the case of the Laplace Beltrami operator  on smooth manifolds is reviewed. The results about nodal sets of general Schr\"odinger operators are rather scattered and we are not able to find a good survey for this. 
We want to stress that these topics represent only a small part of this and related areas in physics and mathematics. We just mention some of the  surveys and papers which demonstrate that this field is huge and there are many different communities which in many cases, unfortunately,  interact only  little. Two instructive reviews are  \cite{MR3102912,1709.03650} written by  mathematical physicists where questions about quantum chaos, nodal domain statistics and many other topcis are discussed. 
Many questions about nodal domains and partitions have their  counterparts in quantum graphs and spectral graph theory, see for instance  \cite{MR1855391, MR3013208, MR2340484}.

\section{Nodal partitions\index{nodal domains}}\label{chapBH.s2}

\subsection{On the local structure of nodal sets}
We refer for this section to the survey of P. B\'erard \cite{MR671608}. We recall that, if $\varphi$ is an eigenfunction associated with $\lambda$ and $D$ is one of its nodal domains, then the restriction of $\varphi$ to $D$ belongs to $H_0^1(D)$ and is an eigenfunction of the Dirichlet Laplacian in $D$. Moreover, $\lambda$ is the ground state energy in $D$.

Using~\cite{MR0075416,MR0397805}, we can prove that nodal sets are regular in the sense:
\begin{itemize}[label=--,itemsep=-2pt]
\item The singular points ${\bf x}_0$ on the nodal lines are isolated. 
\item At the singular points, an even number of half-lines meet with equal angle.
\item At the boundary, this is the same as  adding the tangent line in the picture.
\end{itemize}
The notion of regularity will be defined later for general partitions. 

\subsection{Courant's theorem and Courant sharp\index{Courant sharp} eigenvalues}\label{subsection-Courant-sharp}
The following theorem was established by R. Courant \cite{Cou} in 1923 for the Laplacian with Dirichlet or Neumann conditions.
\begin{theorem}[Courant]\label{chapBH.thm.Courant}\index{Courant nodal Theorem}
The number of nodal components of the $k$-th eigenfunction is not greater than $k$.
\end{theorem}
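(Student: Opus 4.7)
The plan is to argue by contradiction, combining the Courant--Fischer min-max characterisation of $\lambda_k$ with the unique continuation property for the Laplacian. Suppose that the $k$-th eigenfunction $\varphi_k$ has at least $k+1$ nodal domains $D_1,\dots,D_{k+1}$. For each $i$, let $\psi_i$ denote the extension by zero to $\Omega$ of $\varphi_k|_{D_i}$. By the opening remark of Section~\ref{chapBH.s2}, each $\psi_i$ lies in $H_0^1(\Omega)$, has constant sign, and satisfies $\int_\Omega|\nabla\psi_i|^2=\lambda_k\int_\Omega\psi_i^2$; moreover $\psi_i\psi_j=0$ almost everywhere for $i\ne j$ because the nodal domains are disjoint.

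Within the $k$-dimensional span of $\psi_1,\dots,\psi_k$, I would then seek a nontrivial combination $u=\sum_{i=1}^{k}c_i\psi_i$ that is $L^2$-orthogonal to the $k-1$ eigenfunctions $\varphi_1,\dots,\varphi_{k-1}$. This imposes only $k-1$ linear conditions on $k$ coefficients, so a solution $u\not\equiv 0$ exists. The disjointness of the supports yields
\[
\int_\Omega|\nabla u|^2 \;=\; \sum_{i=1}^{k}c_i^2\int_\Omega|\nabla\psi_i|^2 \;=\; \lambda_k\sum_{i=1}^{k}c_i^2\int_\Omega\psi_i^2 \;=\; \lambda_k\int_\Omega u^2,
\]
so the Rayleigh quotient of $u$ equals $\lambda_k$. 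Since $u$ is orthogonal to $\varphi_1,\dots,\varphi_{k-1}$, the Courant--Fischer principle forces $u$ to be a minimiser of this Rayleigh quotient on that orthogonal complement. Expanding $u$ in the orthonormal eigenbasis $\{\varphi_n\}$ then shows that only modes of eigenvalue $\lambda_k$ may carry nonzero coefficients, hence $u\in E(\lambda_k)$ and $(-\Delta-\lambda_k)u=0$ on all of $\Omega$.

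To conclude, observe that by construction $u$ vanishes identically on the nonempty open set $D_{k+1}\subset\Omega$. Unique continuation for $-\Delta-\lambda_k$ on the connected domain $\Omega$ (through real analyticity of interior eigenfunctions, or the Aronszajn--Carleman strong unique continuation theorem) then forces $u\equiv 0$ throughout $\Omega$, contradicting $u\not\equiv 0$. The delicate step in this plan is the transition from \emph{Rayleigh quotient equal to $\lambda_k$} to \emph{$u$ is a genuine eigenfunction on all of $\Omega$}: a priori $u$ is pieced together from Dirichlet ground states on the $D_i$ with potentially mismatched normal derivatives across $\mathcal{N}(\varphi_k)$, so it is not obvious that $u$ solves the PDE globally. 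The variational/spectral-expansion step sketched above circumvents this issue cleanly, and once $u\in E(\lambda_k)$ is secured the unique continuation argument is classical.
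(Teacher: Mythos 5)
Your argument is correct and is the classical Courant--Hilbert proof (min--max characterisation plus unique continuation), including the careful treatment of the one genuinely delicate step, namely upgrading the pieced-together test function $u$ with Rayleigh quotient $\lambda_k$ to a genuine eigenfunction via the spectral expansion before invoking real analyticity on the open set $D_{k+1}$. The paper states this theorem without proof, citing only Courant's 1923 paper, so your proposal coincides with the standard argument the citation points to and there is nothing different to compare against.
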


We say that a spectral pair $(\varphi,\lambda)$ is Courant sharp if $\lambda=\lambda_k$ and $u$ has $k$ nodal domains. We say that an eigenvalue $\lambda_k$ is Courant sharp if there exists an eigenfunction $\varphi$ associated with $\lambda_{k}$ such that $(\varphi,\lambda_k)$ is a Courant sharp spectral pair.

Whereas  the Sturm-Liouville theory shows that in dimension $1$ all the spectral pairs are Courant sharp, we will see below that in higher dimension,
the Courant sharp situation can only occur for a finite number of eigenvalues.

 \subsection{Pleijel's theorem\index{Pleijel Theorem}}\label{chapBH.ss2.5}
Pleijel proves in 1956 the following theorem \cite{MR0080861}:
\begin{theorem}[Weak Pleijel's theorem]\label{chapBH.Pleijelweakform}
If the dimension is $\geq 2$, there is only a finite number of Courant sharp eigenvalues of the Dirichlet Laplacian.
\end{theorem}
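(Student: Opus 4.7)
The plan is to combine two classical ingredients — the Faber--Krahn inequality and Weyl's asymptotic law — to show that the Courant sharp condition becomes incompatible with the growth rate of $\lambda_k$ once $k$ is large enough.

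First I would unpack what Courant sharpness forces. Suppose $(\varphi_k,\lambda_k)$ is Courant sharp, so the nodal partition $\{D_1,\dots,D_k\}$ of $\varphi_k$ has exactly $k$ elements. As already recalled in the excerpt, the restriction of $\varphi_k$ to each nodal domain $D_i$ is a Dirichlet groundstate of $H(D_i)$ with eigenvalue $\lambda_k$, i.e.\ $\lambda(D_i)=\lambda_k$ for every $i$. Applying the Faber--Krahn inequality in the plane (the analogous statement holds in dimension $n\geq 2$ with the disk replaced by the ball) gives
\begin{equation*}
\lambda_k \;=\; \lambda(D_i) \;\geq\; \frac{\pi\, j_{0,1}^2}{|D_i|},
\end{equation*}
where $j_{0,1}$ is the first positive zero of the Bessel function $J_0$. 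Summing the resulting bound $|D_i|\geq \pi j_{0,1}^2/\lambda_k$ over $i=1,\dots,k$ and using $\sum_i |D_i|\leq |\Omega|$ yields the partition-type estimate
\begin{equation*}
\frac{k}{\lambda_k\,|\Omega|} \;\leq\; \frac{1}{\pi\, j_{0,1}^2}.
\end{equation*}

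Next I would invoke Weyl's law, $\lambda_k(\Omega)\sim \dfrac{4\pi k}{|\Omega|}$ as $k\to\infty$, which is equivalent to
\begin{equation*}
\lim_{k\to\infty}\frac{k}{\lambda_k\,|\Omega|} \;=\; \frac{1}{4\pi}.
\end{equation*}
If infinitely many eigenvalues were Courant sharp, the displayed inequality would have to persist along a subsequence $k\to\infty$, forcing $\frac{1}{4\pi}\leq \frac{1}{\pi j_{0,1}^2}$, i.e.\ $j_{0,1}^2\leq 4$. Since numerically $j_{0,1}\approx 2.405$, one has $j_{0,1}^2>4$, a contradiction. Hence only finitely many $k$ can be Courant sharp, which is the statement.

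The conceptually hard part is not the argument above but the two external inputs it rests on: Faber--Krahn (which requires symmetrization and is sharp only on balls/disks) and the Weyl asymptotics for the Dirichlet Laplacian on a piecewise-$C^{1,+}$ domain. Both are taken as known. The only care needed in the write-up is to keep track of the Faber--Krahn and Weyl constants in the intended dimension (in $\mathbb R^n$ the ratio becomes $\lambda(B^*) \cdot \omega_n^{2/n}$ versus $(2\pi)^2/\omega_n^{2/n}$ where $B^*$ is the unit ball and $\omega_n$ its volume), and to check that the strict inequality between them survives in every dimension $n\geq 2$, which is exactly the content of Pleijel's observation. A minor point, easily handled, is to allow multiplicities: if $\lambda_k$ is Courant sharp it suffices that \emph{some} eigenfunction in the eigenspace has $k$ nodal domains, and the Faber--Krahn argument is applied to that particular eigenfunction, so nothing changes.
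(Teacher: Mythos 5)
Your proposal is correct and follows essentially the same route as the paper: summing the Faber--Krahn inequality over the nodal domains of a Courant sharp eigenfunction and comparing with Weyl's law, the contradiction coming from $\lambda(\Circle)=\pi j_{0,1}^2>4\pi$ (equivalently, the constant $4\pi/\lambda(\Circle)$ in the strong Pleijel bound being strictly less than $1$). Your remarks on multiplicities and on the dimensional constants match the paper's treatment (the higher-dimensional case being deferred there to a remark citing Peetre and B\'erard--Meyer).
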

This theorem is the consequence of a more precise theorem which gives a link between Pleijel's theorem and partitions. For describing this result and its proof, we first recall the Faber-Krahn inequality:\index{Faber-Krahn (inequality)}
\begin{theorem}[Faber-Krahn inequality]
For any domain $D\subset \mathbb R^2$, we have
\begin{equation}\label{chapBH.eq.FK}
|D|\ \lambda(D) \geq \lambda(\Circle )\,,
\end{equation}
 where $|D|$ denotes the area of $D$ and $\Circle$ is the disk of unit area ${\mathcal B}\Big(0, \frac{1}{\sqrt{\pi}}\Big)\,. $
\end{theorem}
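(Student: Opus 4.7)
The plan is to prove \eqref{chapBH.eq.FK} by Schwarz symmetrization (symmetric decreasing rearrangement), which is the classical route due to Faber and Krahn. Let $u\in H_0^1(D)$ be a (strictly positive) $L^2$-normalized groundstate eigenfunction of $H(D)$, so that by Rayleigh's principle
\begin{equation*}
\lambda(D)=\int_D |\nabla u|^2\, dx.
\end{equation*}
Let $D^{*}$ denote the open disk centered at the origin with $|D^{*}|=|D|$, and let $u^{*}$ be the symmetric decreasing rearrangement of $u$, i.e.\ the unique radially symmetric, radially nonincreasing function on $D^{*}$ that is equimeasurable with $u$: $|\{u^{*}>t\}|=|\{u>t\}|$ for every $t>0$. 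Since $u\in H_0^1(D)$, $u^{*}\in H_0^1(D^{*})$.

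The next step is to invoke the two main properties of rearrangement. Equimeasurability immediately gives $\|u^{*}\|_{L^2(D^{*})}=\|u\|_{L^2(D)}=1$. The P\'olya–Szeg\H{o} inequality then yields the nontrivial comparison
\begin{equation*}
\int_{D^{*}}|\nabla u^{*}|^2\,dx\;\leq\;\int_D|\nabla u|^2\,dx.
\end{equation*}
Combined with the variational characterization of the ground state of $H(D^{*})$, this produces
\begin{equation*}
\lambda(D^{*})\;\leq\;\frac{\int_{D^{*}}|\nabla u^{*}|^2}{\int_{D^{*}}|u^{*}|^2}\;\leq\;\int_{D}|\nabla u|^2\;=\;\lambda(D).
\end{equation*}

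To finish, I would use the scaling $\lambda(tD)=t^{-2}\lambda(D)$ for $t>0$. Since $D^{*}=\sqrt{|D|}\,\Circle$ (because $\Circle$ has area $1$), this gives $\lambda(D^{*})=\lambda(\Circle)/|D|$. Substituting into the inequality above yields $\lambda(\Circle)/|D|\leq \lambda(D)$, which is \eqref{chapBH.eq.FK}.

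The one nontrivial ingredient is the P\'olya–Szeg\H{o} inequality, and this is the main obstacle to an elementary proof. It relies on the coarea formula applied to both $u$ and $u^{*}$, on the fact that the level sets $\{u^{*}>t\}$ are disks of the same area as $\{u>t\}$, and crucially on the classical \emph{isoperimetric inequality in $\mathbb{R}^2$} (among planar regions of prescribed area, the disk minimizes the perimeter), which controls the perimeter of the level sets of $u$ from below by that of the level sets of $u^{*}$. A Cauchy–Schwarz step on $\int|\nabla u|$ along each level set then converts the perimeter comparison into the $L^2$ gradient comparison. Equality in Faber–Krahn forces equality in the isoperimetric inequality at almost every level, hence $D$ is a disk (up to a null set), which also justifies using $\Circle$ as the extremal configuration.
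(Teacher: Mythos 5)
Your argument is correct: this is the classical Faber--Krahn proof via Schwarz symmetrization, combining equimeasurability, the P\'olya--Szeg\H{o} inequality, and the scaling law $\lambda(tD)=t^{-2}\lambda(D)$ to reduce to the unit-area disk $\Circle$. The paper itself states this theorem without proof, as recalled classical background for Pleijel's argument, so there is no in-paper proof to compare against; your route is the standard one. Two minor points: the statement is only meaningful for $|D|<\infty$ (otherwise it is vacuous), and for a general open set one should work with the Rayleigh-quotient infimum and a minimizing sequence rather than assuming a positive ground state exists, though this changes nothing in the symmetrization step.
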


Note that improvements can be useful when $D$ is "far" from a disk. It is then interesting to have a lower bound for $|D|\ \lambda(D) - \lambda(\Circle )$. We refer for example to \cite{MR3357184} and \cite{MR1266215}. These ideas are behind recent improvements by Steinerberger \cite{MR3272823}, Bourgain \cite{MR3340367} and Donnelly \cite{MR3205801} of the strong Pleijel's theorem below. Its proof  is indeed enlightning.
First, by summation of Faber-Krahn's inequalities \eqref{chapBH.eq.FK} applied to each $D_i$ and having in mind the definition of the energy, we deduce that
for any open partition $\mathcal D$ of $\Omega$ we have
\begin{equation}\label{chapBH.fkv2}
 |\Omega|\ \Lambda(\mathcal D) \geq \sharp (\mathcal D) \, \lambda( \Circle )\,,
\end{equation}
where $\sharp (\mathcal D)$ denotes the number of elements of the partition.\\
Secondly, we implement Weyl's formula for the counting function of the Laplacian which reads in dimension $d$
\begin{equation}\label{chapBH.wform1}
 N(\lambda) \sim \frac{\omega_d}{(2\pi)^{d}} |\Omega| \lambda^{\frac d2},\quad \mbox{ as } \lambda \rightarrow +\infty\,,
\end{equation}
where $\omega_d$ denotes the volume of a ball of radius 1 in $\mathbb R^d$ and $|\Omega|$ the volume of $\Omega$. This leads to:
\begin{theorem}[Strong Pleijel's theorem]\label{chapBH.PropPleijel1}
Let $\varphi_{n}$ be an eigenfunction of $H(\Omega)$ associated with $\lambda_{n}(\Omega)$. Then
\begin{equation}\label{chapBH.bourgain3}
\limsup_{n\to +\infty} \frac{\mu(\varphi_n)}{n}\leq \frac{4 \pi }{ \lambda(\Circle)}\,.
\end{equation}
\end{theorem}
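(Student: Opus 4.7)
The plan is to combine the summed Faber--Krahn inequality \eqref{chapBH.fkv2} applied to the nodal partition with Weyl's asymptotics \eqref{chapBH.wform1} in dimension $d=2$.

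First I would fix an eigenfunction $\varphi_n$ associated to $\lambda_n(\Omega)$ and let $\mathcal{D}=\{D_i\}_{1\leq i\leq \mu(\varphi_n)}$ be its nodal partition. The key observation (recalled at the start of Section~2.1) is that the restriction of $\varphi_n$ to each nodal domain $D_i$ is the ground state of $H(D_i)$, so $\lambda(D_i)=\lambda_n(\Omega)$ for every $i$. Consequently $\Lambda(\mathcal{D})=\lambda_n(\Omega)$ and $\sharp(\mathcal{D})=\mu(\varphi_n)$.

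Next I would plug this into \eqref{chapBH.fkv2} to obtain
\begin{equation*}
|\Omega|\, \lambda_n(\Omega) \;\geq\; \mu(\varphi_n)\, \lambda(\Circle),
\end{equation*}
which rearranges to $\mu(\varphi_n)/n \leq |\Omega|\,\lambda_n(\Omega)/(n\,\lambda(\Circle))$. Now I would invoke Weyl's law \eqref{chapBH.wform1} in dimension $d=2$ (with $\omega_2=\pi$), which gives $N(\lambda)\sim |\Omega|\lambda/(4\pi)$, and inverting this asymptotics yields $\lambda_n(\Omega) \sim 4\pi n /|\Omega|$ as $n\to\infty$. Substituting back produces
\begin{equation*}
\limsup_{n\to\infty} \frac{\mu(\varphi_n)}{n} \;\leq\; \limsup_{n\to\infty} \frac{|\Omega|\,\lambda_n(\Omega)}{n\,\lambda(\Circle)} \;=\; \frac{4\pi}{\lambda(\Circle)},
\end{equation*}
which is the desired bound.

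There is really no hard step here once the ingredients are in place: Faber--Krahn gives a lower bound on $\Lambda(\mathcal{D})$ in terms of the cardinality of any partition, the nodal partition trivially achieves $\Lambda(\mathcal{D})=\lambda_n$, and Weyl's law controls how fast $\lambda_n$ grows. The only point that requires a word of care is the inversion of Weyl's asymptotics to get $\lambda_n\sim 4\pi n/|\Omega|$; this is standard but should be stated explicitly. Note also that the weak form (Theorem~\ref{chapBH.Pleijelweakform}) follows immediately, since $4\pi/\lambda(\Circle)<1$ (by Faber--Krahn applied to the disk of area $|\Omega|$ compared to $\Omega$ itself, or by direct numerical evaluation using $\lambda(\Circle)=\pi j_{0,1}^2$), so Courant sharp eigenvalues (which require $\mu(\varphi_n)=n$) can only occur finitely often.
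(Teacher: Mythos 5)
Your proposal is correct and follows exactly the route the paper itself sketches: apply the summed Faber--Krahn inequality \eqref{chapBH.fkv2} to the nodal partition of $\varphi_n$ (using that each nodal domain has ground state energy $\lambda_n(\Omega)$, so $\Lambda(\mathcal D)=\lambda_n(\Omega)$ and $\sharp(\mathcal D)=\mu(\varphi_n)$), then invert Weyl's law \eqref{chapBH.wform1} with $d=2$, $\omega_2=\pi$ to get $\lambda_n(\Omega)\sim 4\pi n/|\Omega|$ and pass to the limsup. The only cosmetic quibble is your parenthetical justification that $4\pi/\lambda(\Circle)<1$: the clean argument is simply $\lambda(\Circle)=\pi j_{0,1}^2$ with $j_{0,1}>2$, rather than an appeal to Faber--Krahn comparing $\Omega$ with a disk.
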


\begin{remark} It is natural (see an important motivation for minimal partitions in the next section) to determine the Courant sharp situation for some examples. This kind of analysis has been initiated by \AA. Pleijel for the square, and continued in \cite{MR2483815} for the disk and some rectangles (rational or irrational).
We recall that, according to Theorem~\ref{chapBH.Pleijelweakform}, there are a finite number of Courant sharp eigenvalues. The point is to quantify this number or to find lower bounds or upper bounds for the largest integer $n$ such that $\lambda_{n-1} < \lambda_n$ with $\lambda_n$ Courant sharp. This involves an explicit control of the remainder in Weyl's formula and then a case by case analysis of the remaining eigenspaces.

Other domains have been analyzed by various subgroups of the set of authors (Band, B\'erard, Bersudsky, Charron,  Fajman, Helffer, Hoffmann-Ostenhof, Kiwan, L\'ena, Leydold, Persson-Sundqvist, Terracini, \ldots, see \cite{HenrotBook} for the references): the square and the annulus for the Neumann-Laplacian, the sphere, the irrational and equilateral torus, the triangle (equilateral, hemi-equilateral, right angled isosceles), Neumann $2$-rep-tiles,  the cube, the ball, the 3D-torus.
\end{remark}
\begin{remark}\label{RemPl}
 Pleijel's Theorem extends to bounded domains in $\mathbb{R}^d$, and more generally to compact $d$-manifolds with boundary, with a constant $\gamma(d) <1$ replacing $ {4 \pi} / {\lambda(\Circle)}$
in the right-hand side of \eqref{chapBH.bourgain3} (see Peetre \cite{MR0092917}, B\'{e}rard-Meyer \cite{MR690651}). This constant is independent of the geometry.  It is also true for the Neumann Laplacian in a piecewise analytic bounded domain in $\mathbb R^2$ (see \cite{MR2457442} whose proof is based on a control of the asymptotics of the number of boundary points belonging to the nodal sets of the eigenfunction associated with $\lambda_k$ as $k\to +\infty\,$, a difficult result proved by Toth-Zelditch \cite{MR2487604}).
C. L\'ena \cite{1609.02331} gets the same result for $C^2$ domains,  without any condition on the dimension,  through a very nice decomposition of the nodal domains. In \cite{MR3584180,MR3584181}, the authors determine an upper bound for Courant-sharp eigenvalues, expressed in terms of simple geometric invariants of $\Omega$.   Finally, it is expected that Pleijel's theorem can be extended to Schr\"odinger operators $-\Delta +V$, either for the negative spectrum if $V\rightarrow 0$  as $|{\bf x}| \rightarrow +\infty$ (for instance the potential of the Hydrogen atom) or for the whole spectrum if $V\rightarrow +\infty\,$. This has been proved for the harmonic oscillator by P. Charron and for radial potentials by Charron-Helffer-Hoffmann-Ostenhof (see \cite{1604.08372} and references therein).
\end{remark}

\section{Minimal spectral partitions}\label{chapBH.s4}
\subsection{Definitions}\label{chapBH.ss4.1}
  Spectral minimal partitions were introduced in \cite{MR2483815} within a more general class  depending on $p\in [1,+\infty]$ ($p=1$ and $p=+\infty$  being physically the most interesting).
For any integer $k\geq1$ and $ p\in [1,+\infty[$, we define the $p$-energy of a $k$-partition $\mathcal D=\{D_{i}\}_{1\leq i\leq k}$ by
\begin{equation}\label{chapBH.LAp}
\Lambda_{p}(\mathcal D)=\Big(\frac 1k \sum_{i=1}^k \lambda(D_i)^p\Big)^{\frac 1p}\,.
\end{equation}
The associated optimization problem writes
\begin{equation}\label{chapBH.Lfrakp}
\mathfrak L_{k,p}(\Omega)=\inf_{\mathcal D\in \mathfrak O_k(\Omega)}\Lambda_{p}(\mathcal D)\,,
\end{equation}
and we call $p$-minimal $k$-partition a $k$-partition with $p$-energy $\mathfrak L_{k,p}(\Omega)$.\\
For $p=+\infty$, we write $\Lambda_{\infty}(\mathcal D)=\Lambda(\mathcal D)$ and $\mathfrak L_{k,\infty}(\Omega)=\mathfrak L_k(\Omega)\,$.

The analysis of the properties of minimal partitions leads us to introduce two notions of regularity:
\begin{itemize}[label=--,itemsep=-2pt]
\item A partition $\mathcal D=\{D_i\}_{1\leq i\leq k}$ of $\Omega$ in $\mathfrak O_k(\Omega)$ is called {\bf strong} if
$${\rm{ Int\,}}(\overline{\cup_i D_i}) \setminus \partial \Omega =\Omega\,.$$
\item It is called {\bf nice} if 
$$D_i= {\rm{ Int\,}}(\overline{D_i})\cap \Omega\,,$$
 for any $1\leq i\leq k$.
\end{itemize}
In Figure \ref{chapBH.fig.exvecpABcarre}, only the fourth picture gives a nice partition. Attached to a strong partition, we associate the boundary set 
\begin{equation}\label{chapBH.assclset}
\partial \mathcal D = \overline{ \cup_i \left( \Omega \cap \partial D_i \right)}\,,
\end{equation}
which  plays the role of the nodal set (in the case of a nodal partition). \\
To go further, we introduce the set ${\mathfrak O}^{\mathsf{reg}}_k(\Omega)$ of {\bf regular} partitions, which should satisfy the following properties~:
\begin{enumerate}[label={\rm(\roman*)}]
\item Except  at  finitely many distinct ${\bf x}_i\in\Omega\cap \partial\mathcal D$ in the neigborhood of which $\partial \mathcal D $ is the union of $\nu({\bf x}_i)$ smooth curves ($\nu({\bf x}_i)\geq 2$) with one end at ${\bf x}_i$, $\partial \mathcal D $ is locally diffeomorphic to a regular curve.
\item $\partial\Omega\cap \partial \mathcal D $ consists of a (possibly empty) finite set of points ${\bf y}_j$. Moreover $\partial \mathcal D $ is near ${\bf y}_j$ the union of $\rho({\bf y}_j)$ distinct smooth half-curves which hit ${\bf y}_j$.
\item $\partial \mathcal D $ has the equal angle meeting property, that is the half curves cross with equal angle at each singular interior point of $\partial \mathcal D $ and also at the boundary together with the tangent to the boundary.
\end{enumerate}
 We denote by $X(\partial \mathcal D )$ the set corresponding to the points ${\bf x}_i$ introduced in (i) and by $Y(\partial \mathcal D )$ corresponding to the points ${\bf y}_j$ introduced in (ii).\\

This notion of regularity for partitions is very close to what we have observed for the nodal partition of an eigenfunction. The main difference is that, in the nodal case, there is always an even number of half-lines meeting at an interior singular point.

\subsection{Bipartite partitions}\label{chapBH.ss4.3}
Two sets $ D_i,D_j$ of the partition $\mathcal D$ are neighbors and write $ D_i\sim D_j$, if
$D_{ij}={\rm{ Int\,}}(\overline {D_i\cup D_j})\setminus \partial \Omega $
is connected. A regular partition is bipartite\index{bipartite partition} if it can be colored by two colors (two neighbors having two different colors).\\
Nodal partitions are the main examples of bipartite partitions.
Note that in the case of a simply connected planar domain, we know by graph theory that, if for a regular partition all the $\nu({\bf x}_i)$ are even, then the partition is bipartite. This is no more the case on an annulus or on a surface. 

 \subsection{Main properties of minimal partitions}\label{chapBH.ss4.4}
It has been proved by Conti-Terracini-Verzini  (existence) and Helffer--Hoffmann-Ostenhof--Terracini (regularity) ( \cite{MR2483815} and references therein) the following theorem:
\begin{theorem}\label{chapBH.thstrreg}
For any $ k$, there exists a minimal $k$-partition which is strong and regular. Moreover any minimal $k$-partition has a strong and regular representative\footnote{possibly after a modification of the open sets of the partition by capacity $0$ subsets.}. The same result holds for  the $p$-minimal $k$-partition problem with $p\in [1,+\infty)$. 
\end{theorem}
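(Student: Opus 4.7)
The plan is to treat existence first via a penalization/relaxation scheme, and then derive strong and regular representatives from a local analysis near $\partial\mathcal D$. For existence (following the Conti-Terracini-Verzini approach), I would introduce for each $\varepsilon>0$ the relaxed functional
\[F_\varepsilon(u_1,\ldots,u_k)=\sum_{i=1}^k\int_\Omega|\nabla u_i|^2\,d{\bf x}+\frac{1}{\varepsilon}\sum_{i\neq j}\int_\Omega u_i^2u_j^2\,d{\bf x}\]
on $k$-tuples of nonnegative $u_i\in H^1_0(\Omega)$ with $\|u_i\|_{L^2}=1$ (for the $\Lambda_p$ variant with $p<\infty$ one weights the first sum appropriately so its value tends to $\Lambda_p(\mathcal D)^p$, and for $p=+\infty$ one uses $\max_i$ in place of $\sum_i$ or approximates by large $p$). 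The direct method yields a minimizer $U^\varepsilon=(u_1^\varepsilon,\ldots,u_k^\varepsilon)$ of $F_\varepsilon$ satisfying the coupled Euler-Lagrange system
\[-\Delta u_i^\varepsilon+\frac{2}{\varepsilon}u_i^\varepsilon\sum_{j\neq i}(u_j^\varepsilon)^2=\mu_i^\varepsilon\,u_i^\varepsilon.\]
I would then extract a weak-$H^1$ limit $U^0$ as $\varepsilon\to0$; the crucial ingredient is a uniform Lipschitz estimate on $U^\varepsilon$ (via an Almgren-type monotonicity formula for segregating systems), which guarantees that in the limit the supports $D_i=\{u_i^0>0\}$ are open, pairwise disjoint and exhaust $\Omega$ up to a negligible set. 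Each $u_i^0$ is then the positive Dirichlet ground state on $D_i$ with eigenvalue $\mu_i^0=\lambda(D_i)$, so $\{D_i\}$ is a $k$-partition realizing $\mathfrak L_{k,p}(\Omega)$ (the upper bound comes from admissibility of $\{D_i\}$, the lower bound from comparing $F_\varepsilon$ evaluated at a competitor with the Faber-Krahn-controlled limit).

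For the strong/regular representative, I would first replace each $D_i$ by $\mathrm{Int}(\overline{D_i})\cap\Omega$, modifying by a zero-capacity set; the strong property $\mathrm{Int}(\overline{\cup_i D_i})\setminus\partial\Omega=\Omega$ then follows by contradiction from domain monotonicity of $\lambda(\cdot)$: any open region $\omega\subset\Omega$ disjoint from $\overline{\cup_i D_i}$ could be glued to an adjacent $D_i$, strictly decreasing $\lambda(D_i\cup\omega)<\lambda(D_i)$, contradicting minimality. For the regularity of $\partial\mathcal D$, fix ${\bf x}_0\in\Omega\cap\partial\mathcal D$, collect the ground states $\varphi_i$ of the pieces meeting ${\bf x}_0$, extend them by zero across $\partial D_i$, and perform an Almgren blow-up. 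The homogeneous limit is a segregated configuration on $\mathbb R^2$ whose components are harmonic in their supports and vanish on a common interface; classifying these limits (they are, up to rotation, decompositions of a harmonic polynomial into its sign-components on equi-angular sectors) yields exactly the picture (i)-(iii): finitely many smooth arcs meeting at interior singular points $X(\partial\mathcal D)$ with the equal angle property, and at boundary points $Y(\partial\mathcal D)$ an analogous half-space blow-up yields the equal angle property with the boundary tangent included.

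The main obstacle is the uniform-in-$\varepsilon$ Lipschitz bound on $U^\varepsilon$, which simultaneously enables passage to the limit and the blow-up classification at singular points; it is proved via a monotonicity formula for systems of positive functions with strongly repulsive coupling, and its survival under $\varepsilon\to 0$ provides an Almgren frequency function for the segregated limit whose admissible values at singular points are half-integers, which is precisely what pins the opening angles down to $\pi/\nu({\bf x}_i)$. A secondary difficulty is the non-differentiability of the $\max$ in the $p=+\infty$ case, which I would handle either by approximating through $\mathfrak L_{k,p}$ with $p\to+\infty$ and invoking the uniform regularity, or by a selection argument restricted to the pieces attaining the max. Granted these two technical ingredients, the existence of a strong and regular minimizing $k$-partition follows, and the representative statement is obtained by applying the above capacity-cleanup to any minimizer.
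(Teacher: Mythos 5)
The survey states this theorem without proof, attributing existence to Conti--Terracini--Verzini and regularity to Helffer--Hoffmann-Ostenhof--Terracini \cite{MR2483815}, so there is no in-paper argument to compare against. Your sketch is, in outline, the strategy of those cited works: singular perturbation of the eigenvalue functional by the competition term $\varepsilon^{-1}\sum_{i\neq j}\int u_i^2u_j^2$, uniform a priori bounds permitting passage to a segregated limit whose supports form the optimal partition, and an Almgren/Alt--Caffarelli--Friedman blow-up classification at points of $\partial\mathcal D$ giving the equal-angle structure. In that sense the architecture is faithful.

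Several of the technical claims, however, are either stronger than what the cited proofs use or not quite right as stated. (1) The uniform-in-$\varepsilon$ \emph{Lipschitz} bound you lean on is a considerably later and harder result than the existence/regularity theory requires; the proofs in the literature run on uniform H\"older estimates for the $\varepsilon$-system, with Lipschitz continuity established only for the \emph{limit} configuration via the monotonicity formulae. (2) The case $p=+\infty$ cannot be dispatched by ``approximating through $\mathfrak L_{k,p}$ with $p\to+\infty$'': although $\mathfrak L_{k,p}\to\mathfrak L_k$, finite-$p$ minimizers need not be equipartitions and their convergence to an $\infty$-minimizer is not automatic, so this case needs its own argument (as it receives in \cite{MR2483815}). (3) In your gluing argument for the strong property, adjoining a residual open set $\omega$ to a neighboring $D_i$ strictly decreases $\lambda$ only if $D_i\cup\omega$ is connected, and it lowers $\Lambda(\mathcal D)$ only if $D_i$ attains the maximum; so there is in general no contradiction, only the (sufficient) conclusion that the energy does not increase, which is what actually yields a strong representative. (4) A small slip: $\nu({\bf x}_i)$ equi-angular half-curves meet at angle $2\pi/\nu({\bf x}_i)$, not $\pi/\nu({\bf x}_i)$, the corresponding Almgren frequency being $\nu({\bf x}_i)/2$. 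None of these invalidates the overall plan, but points (2) and (3) are genuine gaps in the argument as written.
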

Note that  the regularity property implies that a minimal  partition is nice.\\

When $p=+\infty$, minimal spectral partitions have two important properties. Let ${\mathcal D}=\{D_{i}\}_{1\leq i\leq k}$ be a minimal $k$-partition, then
\begin{itemize}[label=--,itemsep=-2pt]
\item The minimal partition ${\mathcal D}$ is a spectral equipartition, {\it i.e.} $\mathfrak L_{k}(\Omega)=\lambda(D_{i})$, $\forall 1\leq i\leq k$.
\item For any pair of neighbors $D_i\sim D_j$,
$\lambda_2(D_{ij}) = \mathfrak L_k (\Omega)\,.$
\end{itemize}

For the first property, this can be understood, once the regularity is obtained by pushing the boundary and using the Hadamard formula \cite{MR2251558}. 
For the second property, we can observe that $\{D_i,D_j\}$ is necessarily a minimal $2$-partition of $D_{ij}$. This leads to what we call the {\it  pair compatibility condition}.

Note that in the proof of Theorem~\ref{chapBH.thstrreg}, one obtains on the way an useful construction. Attached to each $D_i$, there is a distinguished ground state $u_i$ such that $u_i >0$ in $D_i$ and such that for each pair of neighbors $\{D_i,D_j\}$, $u_i-u_j$ is the second eigenfunction of the Dirichlet Laplacian in $D_{ij}$. 

Let us now establish two important properties concerning the monotonicity (according to $k$ or the domain $\Omega$):
\begin{itemize}[label=--,itemsep=-2pt]
\item For any $k\geq1\,$, we have $\mathfrak L_k (\Omega)<\mathfrak L_{k+1}(\Omega)\,$.
\item If $\Omega \subset \widetilde \Omega$, then
$\mathfrak L_k (\widetilde \Omega) \leq \mathfrak L_k(\Omega)$ for any $k\geq 1\,.$
\end{itemize}

\subsection{Minimal spectral partitions and Courant sharp property}\label{ss3.4}
A natural question is whether a minimal partition of $\Omega$ is a nodal partition. We have first the following converse theorem (see \cite{MR2483815}):
\begin{theorem}\label{chapBH.partnod}
If a minimal partition is bipartite\index{bipartite partition}, it is a nodal partition\index{nodal domains}.
\end{theorem}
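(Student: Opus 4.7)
The plan is to build explicitly an eigenfunction of $H(\Omega)$ with eigenvalue $\lambda=\mathfrak L_k(\Omega)$ whose nodal partition coincides with $\mathcal D$. Let $\mathcal D=\{D_i\}_{1\leq i\leq k}$ be a bipartite minimal $k$-partition, taken as its strong regular representative (Theorem~\ref{chapBH.thstrreg}), and let $u_i\in H_0^1(D_i)$ be the distinguished positive ground state on $D_i$ furnished by the construction recalled right after that theorem. The equipartition property gives $-\Delta u_i=\lambda u_i$ in $D_i$, and bipartiteness provides a two-coloring $i\mapsto \epsilon_i\in\{+1,-1\}$ with $\epsilon_i\epsilon_j=-1$ whenever $D_i\sim D_j$. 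Extending each $u_i$ by zero outside $D_i$, I set
\[
\varphi:=\sum_{i=1}^k \epsilon_i\, u_i \in H_0^1(\Omega).
\]
By construction $\varphi$ has constant sign $\epsilon_i$ on each $D_i$ and vanishes on $\partial\mathcal D$; so if $\varphi$ proves to be an eigenfunction of $H(\Omega)$, the $D_i$ are automatically its nodal domains.

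The remaining task is to show $-\Delta\varphi=\lambda\varphi$ weakly in $\Omega$, and this is where the pair compatibility condition is crucial. Along every smooth arc of $\partial\mathcal D$ interior to $\Omega$ separating two neighbors $D_i\sim D_j$, pair compatibility asserts that the function equal to $u_i$ on $D_i$ and to $-u_j$ on $D_j$ is the restriction of a second Dirichlet eigenfunction of $H(D_{ij})$ with eigenvalue $\lambda$, and is in particular smooth across the arc. Comparing one-sided normal derivatives yields $\partial_{\nu_i}u_i=\partial_{\nu_j}u_j$ on the arc, where $\nu_i,\nu_j$ denote the outer normals from the two sides. Testing against an arbitrary $\psi\in C^\infty_c(\Omega)$ and integrating by parts on each $D_i$ separately, I obtain
\[
\int_\Omega \nabla\varphi\cdot\nabla\psi\,dx=\lambda\int_\Omega\varphi\,\psi\,dx+\sum_{\text{arcs}}\int_{\text{arc}}\bigl(\epsilon_i\partial_{\nu_i}u_i+\epsilon_j\partial_{\nu_j}u_j\bigr)\psi\,d\sigma,
\]
and the factor $\epsilon_i+\epsilon_j=0$ forced by bipartiteness kills every arc contribution. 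Boundary terms on $\partial\Omega$ vanish because $\psi$ is compactly supported, while the finitely many singular points $X(\partial\mathcal D)\cup Y(\partial\mathcal D)$ have $H^1$-capacity zero in the plane and can be removed from all integrals without changing their values.

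Thus $\varphi$ is a weak, and hence by elliptic regularity on $\Omega\setminus X(\partial\mathcal D)$ a classical, Dirichlet eigenfunction of $H(\Omega)$ with eigenvalue $\lambda$; its nodal partition is exactly $\mathcal D$, proving that $\mathcal D$ is a nodal partition. The delicate ingredient is not the bipartite bookkeeping but the pair compatibility step itself, which must be imported from the regularity theory behind Theorem~\ref{chapBH.thstrreg}; once that is in hand, bipartiteness is precisely what promotes the local signed matching of neighboring ground states into a globally well-defined eigenfunction on $\Omega$.
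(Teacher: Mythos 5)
Your proof is correct and follows essentially the same route as the paper: glue the signed distinguished ground states $\epsilon_i u_i$ using the two-coloring, use the pair compatibility condition (that $u_i-u_j$ is a second eigenfunction of $H(D_{ij})$) to match normal derivatives across each interior arc, and remove the finitely many singular points by a capacity argument. You have merely written out in detail the integration-by-parts verification that the paper leaves as ``immediate.''
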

\begin{proof}
Combining the bipartite assumption and the consequence of the pair compatibility condition mentioned after Theorem~\ref{chapBH.thstrreg}, it is immediate to construct some $\varphi\in H_0^1(\Omega)$ such that
$$\varphi_{\vert D_i} = \pm \varphi_i,\quad\forall 1\leq i\leq k,\qquad\mbox{ and }
\qquad-\Delta \varphi =\mathfrak L_k(\Omega) \varphi \quad\mbox{ in }\Omega\setminus X(\partial \mathcal D).$$
 A capacity argument shows that $-\Delta \varphi =\mathfrak L_k(\Omega) \varphi$ in all $\Omega$ and hence $\varphi$ is an eigenfunction of $H(\Omega)$ whose nodal set is $\partial\mathcal D$.
 \end{proof}

The next question is then to determine how general  the previous situation is. Surprisingly, this only occurs in the so called Courant sharp situation.  For the statement, we need another spectral sequence. 
For any $k\geq1$, we denote by $L_k(\Omega)$ (or $L_{k}$ if there is no confusion) the smallest eigenvalue (if any) for which there exists an eigenfunction with $k$ nodal domains. We set $L_k(\Omega) =+\infty$ if there is no eigenfunction with $k$ nodal domains.
In general, one can show, as an easy consequence of the max-min characterization of the eigenvalues, that
\begin{equation} \label{chapBH.compLLL}
\lambda_k(\Omega)\leq\mathfrak L_k(\Omega)\leq L_k(\Omega)\,.
\end{equation}
The following important theorem (due to \cite{MR2483815}) gives the full picture of the equality cases:
\begin{theorem}\label{chapBH.TheoremL=L}
Suppose $\Omega\subset \mathbb R^2$ is regular. If $\mathfrak L_k(\Omega)=L_k(\Omega)$ or $\mathfrak L_k(\Omega)=\lambda_k(\Omega)$, then
\begin{equation}\label{chapBH.LLL}
\lambda_k(\Omega)=\mathfrak L_k(\Omega)=L_k(\Omega)\,.
\end{equation}
In addition, there exists a Courant sharp eigenfunction associated with $\lambda_{k}(\Omega)$.
\end{theorem}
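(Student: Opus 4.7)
The plan is to derive, from either hypothesis, the existence of a Courant-sharp eigenfunction associated with $\lambda_k(\Omega)$; the three quantities then coincide via the chain~\eqref{chapBH.compLLL}. First I would invoke Theorem~\ref{chapBH.thstrreg} to fix a strong, regular minimal $k$-partition $\mathcal{D}=\{D_i\}_{1\le i\le k}$ together with its family of distinguished positive ground states $u_i\in H_0^1(D_i)$, normalised so that $-\Delta u_i=\mathfrak L_k u_i$ in $D_i$ and $\|u_i\|_{L^2}=1$. Extended by zero, they lie in $H_0^1(\Omega)$, have pairwise disjoint supports and span a $k$-dimensional subspace $V\subset H_0^1(\Omega)$. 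Using $L^2$-orthogonality and the Dirichlet eigenvalue relation on each $D_i$, a short computation shows that every nonzero $v=\sum c_iu_i\in V$ has Rayleigh quotient $R(v)=\mathfrak L_k$ exactly; the min-max principle then reproves $\lambda_k(\Omega)\le\mathfrak L_k(\Omega)$, and the remaining task is to promote this to an equality and produce the eigenfunction.

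Case $\mathfrak L_k=\lambda_k$ (main argument). Since $\dim V=k$, the intersection $V\cap\mathrm{span}(\varphi_1,\ldots,\varphi_{k-1})^\perp$ has positive dimension; pick a nonzero $u^*$ in it. Orthogonality to the first $k-1$ eigenfunctions gives $R(u^*)\ge\lambda_k$, while $R(u^*)=\mathfrak L_k=\lambda_k$ by the previous step; in the spectral expansion $u^*=\sum_{j\ge k}b_j\varphi_j$ every coefficient with $\lambda_j>\lambda_k$ must therefore vanish, forcing $u^*\in E(\lambda_k)$. Writing $u^*=\sum c_iu_i$: if any $c_i$ vanished, $u^*$ would be identically zero on the open set $D_i$ and elliptic unique continuation would give $u^*\equiv 0$, a contradiction; hence each $c_i\ne 0$, $u^*$ has constant nonzero sign on each $D_i$, its zero set is exactly $\partial\mathcal D$, and its nodal domains are precisely the $D_i$. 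Thus $u^*$ is a Courant-sharp eigenfunction for $\lambda_k$, giving $L_k\le\lambda_k$ and, with the chain, $L_k=\mathfrak L_k=\lambda_k$.

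Case $\mathfrak L_k=L_k$ (reduction to the previous case). By the definition of $L_k$ there is an eigenfunction $\psi$ with exactly $k$ nodal domains and eigenvalue $L_k$; its nodal partition $\mathcal D_\psi$ has energy $L_k=\mathfrak L_k$, hence is itself a minimal $k$-partition and is bipartite. Choosing $\mathcal D=\mathcal D_\psi$, with $u_i=\psi|_{D_i}/\|\psi|_{D_i}\|$ up to sign so that $\psi=\sum\epsilon_iu_i\in V$ with bipartite signs, it remains to establish $\mathfrak L_k=\lambda_k$. Suppose for contradiction $\lambda_k<\mathfrak L_k$. Then by the definition of $L_k$ no eigenfunction at an eigenvalue $\lambda_j<L_k$ can have exactly $k$ nodal domains, so in particular $\mu(\varphi_k)\le k-1$. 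I would combine this restriction with the pair-compatibility property $\lambda_2(D_{ij})=\mathfrak L_k$ (together with the resulting strict inequality $\lambda_1(D_{ij})<\mathfrak L_k$) to build a competing $k$-partition of $\Omega$ of strictly smaller maximal ground state — for example by refining a nodal domain of $\varphi_k$ via a second-eigenfunction cut on a two-neighbour union $D_{ij}$, or by a Hadamard-type boundary perturbation of $\mathcal D_\psi$ — contradicting the minimality $\mathfrak L_k=L_k$. This last step is the main obstacle: the clean dimension count of the previous case breaks down when $\lambda_k<\mathfrak L_k$, because the element produced by the same intersection still has $R(u^*)=\mathfrak L_k$ but its spectral expansion can legitimately mix several eigenvalues $\ge\lambda_k$ averaging to $\mathfrak L_k$, so nothing forces $u^*\in E(\lambda_k)$; closing the case therefore requires genuine use of the extra structural data (bipartiteness of the minimal partition, pair-compatibility, or a second-order variational optimality argument at the boundary of $\mathcal D_\psi$).
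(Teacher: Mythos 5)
Your treatment of the case $\mathfrak L_k(\Omega)=\lambda_k(\Omega)$ is correct and is precisely the ``variation of the proof of Courant's theorem'' that the paper alludes to: the $k$-dimensional space $V$ spanned by the zero-extended ground states $u_i$, the nontrivial intersection with $\mathrm{span}(\varphi_1,\dots,\varphi_{k-1})^\perp$, the spectral expansion forcing $u^*\in E(\lambda_k)$, and unique continuation to rule out $c_i=0$, so that $u^*$ is Courant sharp with nodal partition $\mathcal D$. No objection to that half.

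The half with actual content is $\mathfrak L_k(\Omega)=L_k(\Omega)$, and your proposal does not close it --- as you candidly admit. The fallback strategies you sketch would not work as stated: if $\mu(\varphi_k)\le k-1$, splitting one nodal domain $D$ of $\varphi_k$ by a second-eigenfunction cut yields a $k$-partition of energy $\max(\lambda_k,\lambda_2(D))$, and nothing bounds $\lambda_2(D)$ below $L_k$; a Hadamard-type perturbation of the bipartite minimal partition $\mathcal D_\psi$ only expresses first-order stationarity and produces no strictly better competitor. The paper's mechanism for this case is entirely different. Writing $L_k=\mathfrak L_k=\lambda_{m(k)}$, the goal is to show $m(k)=k$, and this is done by building an exhaustive family of domains $\Omega(t)$, $t\in(0,1)$, interpolating between $\Omega(0)=\Omega\setminus\mathcal N(\psi_k)$ and $\Omega(1)=\Omega$ by cutting small intervals in each regular arc of $\mathcal N(\psi_k)$. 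The number $L_k$ is an eigenvalue of every $H(\Omega(t))$; it carries the label $k$ at $t=0$ and the label $m(k)$ at $t=1$, so if $m(k)>k$ the label must jump at some $t_0$, where the multiplicity of $L_k$ has to increase, and a delicate argument at that crossing produces the contradiction. This spectral-flow/deformation idea is the missing ingredient; none of the structural facts you list (bipartiteness, pair compatibility, equipartition) substitutes for it, so the proof as proposed has a genuine gap in the $\mathfrak L_k=L_k$ case.
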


{\noindent\it Sketch of the proof.} 
It is easy to see using a variation of the proof of Courant's theorem that the equality $\lambda_k=\mathfrak L_k$ implies \eqref{chapBH.LLL}. Hence the difficult part is to get \eqref{chapBH.LLL} from the assumption that $L_k=\mathfrak L_k =\lambda_{m(k)}$, that is to prove that $m(k)=k$. This involves a construction of an exhaustive family $\{\Omega(t),\ t \in (0,1)\}$, interpolating between $\Omega(0):=\Omega \setminus {\mathcal N}(\psi_k)$ and $\Omega(1):=\Omega$, where $\psi_k$ is an eigenfunction corresponding to $L_k$ such that its nodal partition is a minimal $k$-partition. This family is obtained by cutting small intervals in each regular component of ${\mathcal N}(\psi_k)$. $L_k$ is  an eigenvalue common to all  $H(\Omega(t))$, but its labelling changes between $t=0$ and $t=1$ at some $t_0$ where the multiplicity of $L_k$ should increase. By a tricky argument which is not detailed here,  we get  a contradiction. $\hfill\square$

Similar results hold in the case of compact Riemannian surfaces when considering the Laplace-Beltrami operator. Typical cases are analyzed like $\mathbb S^2$ in \cite{MR2664708} and $\mathbb T^2$ in \cite{MR3348988}. In the case of dimension~$3$, let us mention that Theorem \ref{chapBH.TheoremL=L} is proved in \cite{MR2644760}. The complete analysis of the topology of minimal partitions in higher dimension is not achieved as for the bidimensional case.

\subsection{Topology of regular partitions}\label{chapBH.s6}
\subsubsection{Euler's formula\index{Euler formula} for regular partitions}\label{chapBH.ss6.1}
In the case of planar domains, one can use a variant of Euler's formula in the following form (see \cite{MR1736932,MR3151084}).
\begin{proposition}\label{chapBH.Euler}
Let $\Omega$ be an open set in $\mathbb R^2$ with piecewise $C^{1,+}$ boundary and $\mathcal D$ be a $k$-partition with $\partial \mathcal D $ the boundary set. Let $b_0$ be the number of components of $\partial \Omega$ and $b_1$ be the number of components of $\partial \mathcal D \cup\partial \Omega$. Denote by $\nu({\bf x}_i)$ and $\rho({\bf y}_i)$ the numbers of curves ending at ${\bf x}_i\in X(\partial \mathcal D)$, respectively ${\bf y}_i\in Y(\partial \mathcal D)$. Then
\begin{equation}\label{chapBH.Emu}
k= 1 + b_1-b_0+\sum_{{\bf x}_i\in X(\partial \mathcal D )}\Big(\frac{\nu({\bf x}_i)}{2}-1\Big)
+\frac{1}{2}\sum_{{\bf y}_i\in Y(\partial \mathcal D )}\rho({\bf y}_i)\,.
\end{equation}
\end{proposition}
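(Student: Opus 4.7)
The plan is to apply Euler's formula for planar graphs to the curve graph $G := \partial\mathcal D\cup\partial\Omega$, viewed as a finite $1$-complex embedded in $\mathbb{R}^2$, and to identify the bounded connected components of $\mathbb{R}^2\setminus G$ with the $D_i$'s together with the holes of $\Omega$.

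First I would endow $G$ with a bona fide finite graph structure. By the regularity hypotheses (i)--(iii), away from the finite sets $X(\partial\mathcal D)$ and $Y(\partial\mathcal D)$ the set $G$ is a disjoint union of smooth arcs; possible corners of $\partial\Omega$ not lying in $Y(\partial\mathcal D)$ can be treated as degree-two vertices, since this does not alter $E-V$. On each connected component of $G$ that contains no point of $X(\partial\mathcal D)\cup Y(\partial\mathcal D)$ (necessarily a smooth closed loop) I place one auxiliary vertex. Writing $L$ for the total number of such auxiliary vertices, this gives
\[
V=|X(\partial\mathcal D)|+|Y(\partial\mathcal D)|+L.
\]
For the edges I would use the degree-sum identity $2E=\sum_v\deg(v)$. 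By (i), the degree at each $x_i\in X(\partial\mathcal D)$ is $\nu(x_i)$; by (ii), the degree at each $y_j\in Y(\partial\mathcal D)$ is $\rho(y_j)+2$, since exactly two arcs of $\partial\Omega$ meet $y_j$ in addition to the $\rho(y_j)$ half-curves of $\partial\mathcal D$; at each auxiliary vertex the degree is $2$. Combining these counts yields
\[
E-V=\sum_{x_i\in X(\partial\mathcal D)}\Bigl(\frac{\nu(x_i)}{2}-1\Bigr)+\frac{1}{2}\sum_{y_j\in Y(\partial\mathcal D)}\rho(y_j).
\]

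Next I would invoke Euler's formula for planar graphs in the convenient form: for any finite graph embedded in $\mathbb{R}^2$ with $V$ vertices, $E$ edges and $c$ connected components, the number of bounded connected components of $\mathbb{R}^2\setminus G$ equals $E-V+c$. In our setting $c=b_1$ by the very definition of $b_1$. On the other hand, $\mathbb{R}^2\setminus G=(\mathbb{R}^2\setminus\overline\Omega)\sqcup(\Omega\setminus\partial\mathcal D)$, so its bounded components are the $b_0-1$ bounded components of $\mathbb{R}^2\setminus\overline\Omega$ (one per ``hole'' of $\Omega$) together with the $k$ components $D_i$ of $\Omega\setminus\partial\mathcal D$. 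Equating,
\[
k+(b_0-1)=E-V+b_1,
\]
and substituting the expression for $E-V$ obtained above produces exactly~\eqref{chapBH.Emu}.

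The main delicacy to watch is that this face count must be insensitive to whether or not the $D_i$ are simply connected: this is automatic, since we are counting \emph{connected components} of $\mathbb{R}^2\setminus G$, and each $D_i$ is one such component irrespective of its topology. A secondary point is that the auxiliary vertices introduced on vertex-free loops (and at corners of $\partial\Omega$) are not canonical; the argument is well defined because the quantities $E-V$ and $b_1$ are both invariant under such $1$-dimensional subdivisions.
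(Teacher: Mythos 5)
Your argument is correct; the paper itself gives no proof of Proposition~\ref{chapBH.Euler} (it only points to \cite{MR1736932,MR3151084}), and the route you take --- Euler's formula for the embedded planar graph $G=\partial \mathcal D \cup\partial \Omega$ combined with the degree-sum identity to evaluate $E-V$ --- is the standard one used in those references, and your bookkeeping (degrees $\nu({\bf x}_i)$, $\rho({\bf y}_j)+2$, and $2$ at auxiliary and corner vertices; $c=b_1$; faces split into the $D_i$ and the holes) checks out on test cases such as the Mercedes partition of the disk and cut annuli. The only caveat is that two of your intermediate claims (every vertex-free component of $G$ is a closed loop, and $\mathbb R^2\setminus\overline\Omega$ has exactly $b_0-1$ bounded components) implicitly exclude the cracked domains permitted by the paper's footnote, where a component of $\partial\Omega$ can be an arc; there each such arc contributes $-1$ both to $E-V$ and to the number of holes, so the two discrepancies cancel and \eqref{chapBH.Emu} still follows, but this should be said if cracks are meant to be covered.
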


This can be applied, together with other arguments to determine upper bounds for the number of singular points of minimal partitions. There is a corresponding result for compact manifolds involving the Euler characteristics.

\subsubsection{Application to regular $3$-partitions}\label{chapBH.ss6.2}
As an application of the Euler formula, we can describe (see \cite{MR2743435})  the possible ``topological'' types of non bipartite minimal $3$-partitions when $\Omega$ is a simply-connected domain in $\mathbb R^2$. 
Then the boundary set $\partial \mathcal D $ has one of the following properties:
\begin{enumerate}[label={\rm[\alph*]},itemsep=-2pt]
\item one interior singular point ${\bf x}_0 \in\Omega$ with $\nu({\bf x}_0)=3$, three points $\{{\bf y}_i\}_{1\leq i\leq3}$ on the boundary $\partial\Omega$ with $\rho({\bf y}_i)=1$;
\item two interior singular points ${\bf x}_0,\ {\bf x}_1\in\Omega$ with $\nu({\bf x}_0)=\nu({\bf x}_1)=3$ and two boundary singular points ${\bf y}_1,\ {\bf y}_2\in\partial\Omega$ with $\rho({\bf y}_1)=1=\rho({\bf y}_2)$;
\item two interior singular points ${\bf x}_0,\ {\bf x}_1\in\Omega$ with $\nu({\bf x}_0)=\nu({\bf x}_1)=3$ and no singular point on the boundary.
\end{enumerate}
This helps us to analyze  (with some success)  the minimal $3$-partitions with some topological type. We actually do not know any example where the minimal $3$-partitions are of type [b] and [c] (see numerical computations in \cite{MR2598097} for the square and the disk, \cite{MR3245081} for circular sectors and see \cite{MR3093548} for complements in the case of  the disk).

\subsubsection{Upper bound for the number of singular points \label{chapBH.ss6.3}}
Euler's formula also  implies
\begin{proposition}\label{chapBH.prop.uppk}
Let $\mathcal D$ be a minimal $k$-partition of a simply connected domain $\Omega$ with $k\geq 2$. Let $X^{\sf odd}(\partial \mathcal D )$ be the subset among the interior singular points $X(\partial\mathcal D)$ for which $\nu({\bf x}_{i})$ is odd. Then the cardinality of $X^{\sf odd}(\partial \mathcal D)$ satisfies
\begin{equation}\label{chapBH.eulergrandk}
\sharp X^{\sf odd}(\partial \mathcal D ) \leq 2k -4 \,.
\end{equation}
\end{proposition}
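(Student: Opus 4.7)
The plan is to apply Euler's formula (Proposition~\ref{chapBH.Euler}) and then close the gap via a traversal argument that uses the niceness of minimal partitions. Since $\Omega$ is simply connected, $b_0 = 1$, and Euler's identity, multiplied by~$2$, reads
\begin{equation*}
2(k - 1) = 2(b_1 - 1) + \sum_{x_i \in X(\partial\mathcal D)} (\nu(x_i) - 2) + \sum_{y_j \in Y(\partial\mathcal D)} \rho(y_j).
\end{equation*}
At each interior singular point, the equal-angle meeting property forces $\nu(x_i) \geq 3$; in particular, for $x_i \in X^{\sf odd}$ one has $\nu(x_i) - 2 \geq 1$, hence $\sum_{X}(\nu(x_i) - 2) \geq \sharp X^{\sf odd}$. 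Dropping also the even contribution to $\sum_X$ I obtain the preliminary bound
\begin{equation*}
\sharp X^{\sf odd}(\partial \mathcal D) \leq 2(k-1) - 2(b_1 - 1) - \sum_{y_j \in Y} \rho(y_j).
\end{equation*}

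To reach $2k - 4$, the remaining task is to show that the slack $2(b_1 - 1) + \sum_Y \rho(y_j) \geq 2$ whenever $k \geq 2$. I would split into cases. If $b_1 \geq 2$, the inequality is immediate from $2(b_1 - 1) \geq 2$. Otherwise $b_1 = 1$, meaning $\partial\mathcal D \cup \partial\Omega$ is connected; since $\partial\mathcal D \neq \emptyset$ (as $k \geq 2$) and $\partial\Omega$ is a Jordan curve, $\partial\mathcal D$ must meet $\partial\Omega$, so $|Y| \geq 1$. If $|Y| \geq 2$, then $\sum_Y \rho(y_j) \geq |Y| \geq 2$ and we are done using $\rho(y_j) \geq 1$. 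The delicate case is thus $b_1 = 1$ and $|Y| = 1$, where one must establish $\rho(y_1) \geq 2$.

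For this last case I would use a traversal argument along the Jordan curve $\partial\Omega$. Suppose for contradiction that $\rho(y_1) = 1$: a single arc of $\partial\mathcal D$ hits $\partial\Omega$ at $y_1$, cutting a half-disk neighborhood of $y_1$ inside $\Omega$ into two adjacent sectors. By the niceness property $D_i = \mathrm{Int}(\overline{D_i}) \cap \Omega$ (guaranteed for minimal partitions by Theorem~\ref{chapBH.thstrreg}), these two sectors must lie in distinct regions $D_a \neq D_b$, for otherwise the arc would be a spurious slit inside a single $D_i$, violating niceness. Now, traversing $\partial\Omega$ once from a basepoint in $D_a$, we cross $y_1$ exactly once (since $|Y|=1$), jumping from $D_a$ to $D_b$; thereafter no further crossing is possible, so we return to the basepoint still in $D_b$, a contradiction. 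Hence $\rho(y_1) \geq 2$, and the preliminary bound yields $\sharp X^{\sf odd} \leq 2k - 4$. The main obstacle is precisely this last step, where the purely combinatorial Euler estimate must be completed by a geometric input (niceness) coming from the minimality of the partition; the previous, Euler-only argument would only produce the weaker bound $\sharp X^{\sf odd} \leq 2k - 2$.
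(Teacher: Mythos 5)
Your proof is correct and follows exactly the route the paper intends: the paper gives no written proof beyond the remark that the bound follows from Euler's formula (Proposition~\ref{chapBH.Euler}), and the argument in the cited source is precisely your combination of Euler's identity with $\nu\geq 3$ at odd points plus the observation that either $b_1\geq 2$ or the boundary trace forces $\sum_j\rho({\bf y}_j)\geq 2$ (by niceness, consecutive boundary arcs meet distinct $D_i$'s, ruling out a single half-curve with $\rho=1$). No gaps.
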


In the case of $\mathbb S^2$, one can prove that a minimal $3$-partition is not nodal (the second eigenvalue has multiplicity $3$), and as a step towards a characterization, one can show that non-nodal minimal partitions have necessarily two singular \emph{triple points} ({\it i.e.} with $\nu({\bf x})=3$). If we assume, for some $k\geq 12$, that a minimal $k$-partition has only singular triple points and consists only of (spherical) pentagons and hexagons, then Euler's formula in  its historical version for convex polyedra $V-E+F= \chi(\mathbb S^2) =2$ (where $F$ is the number of faces, $E$ the number of edges and $V$ the number of vertices) implies that the number of pentagons is $12$.  It has been proved by Soave-Terracini \cite[Theorem 1.12]{MR3345178} that
$$\mathfrak L_{3}(\mathbb S^d) = \frac 32\left(d+\frac 12\right).$$

\subsection{Examples of minimal $k$-partitions}\label{chapBH.sexamples}

When $\Omega$ is a  disk or a square, one can show the minimal $k$-partition are nodal only for $k=1,2,4$ (see Figure~\ref{chapBH.fig.partminDisk} for $k=2,4$).
\begin{figure}[h!bt]
\begin{center}
\subfloat[{Minimal $k$-partitions, $k=2,4\,$.\label{chapBH.fig.partminDisk}}]{\begin{tabular}{p{1.25cm}p{1.25cm}p{1.25cm}p{1.25cm}p{1.25cm}}
\includegraphics[height=1.6cm]{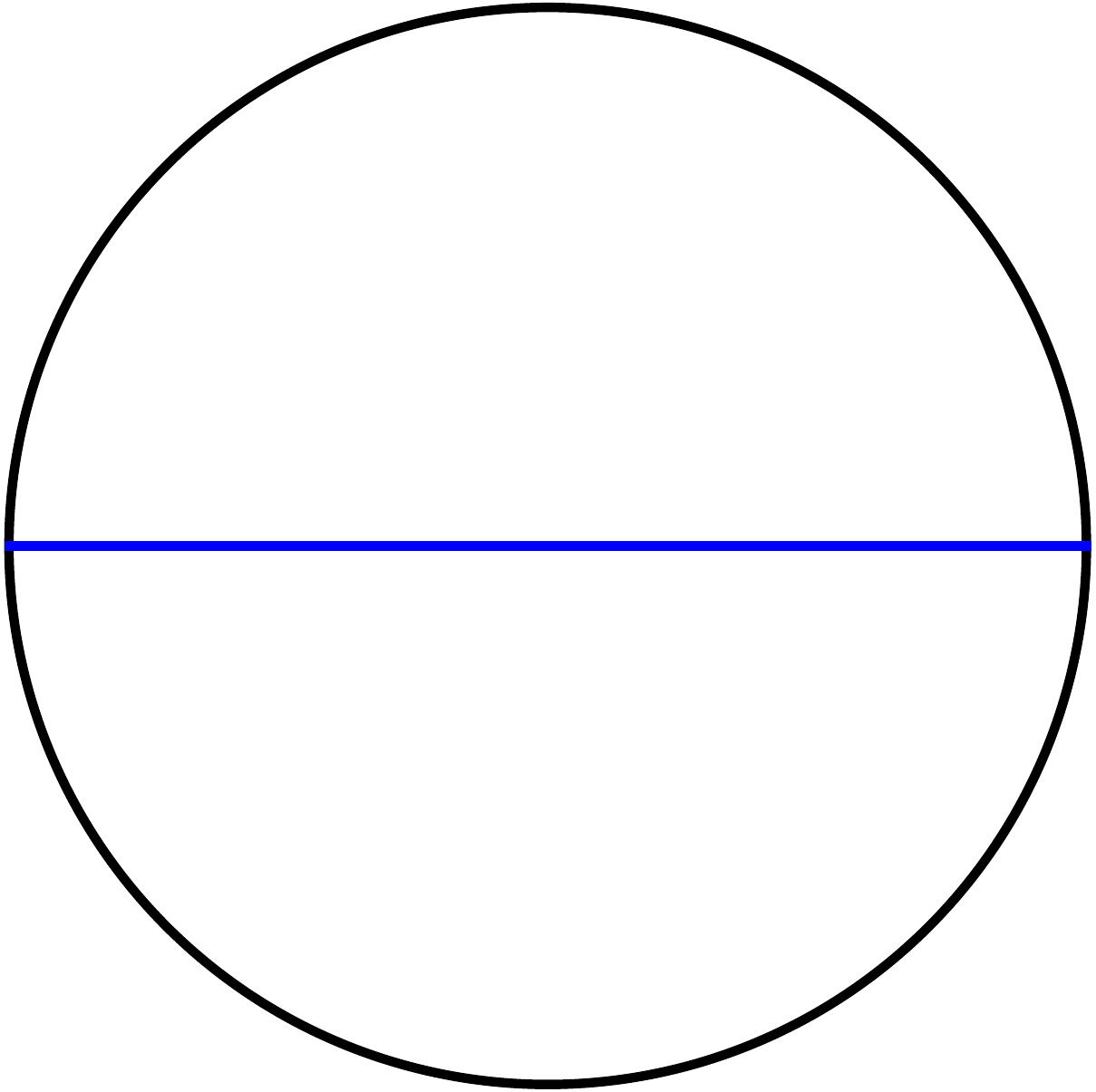}
&\includegraphics[height=1.6cm]{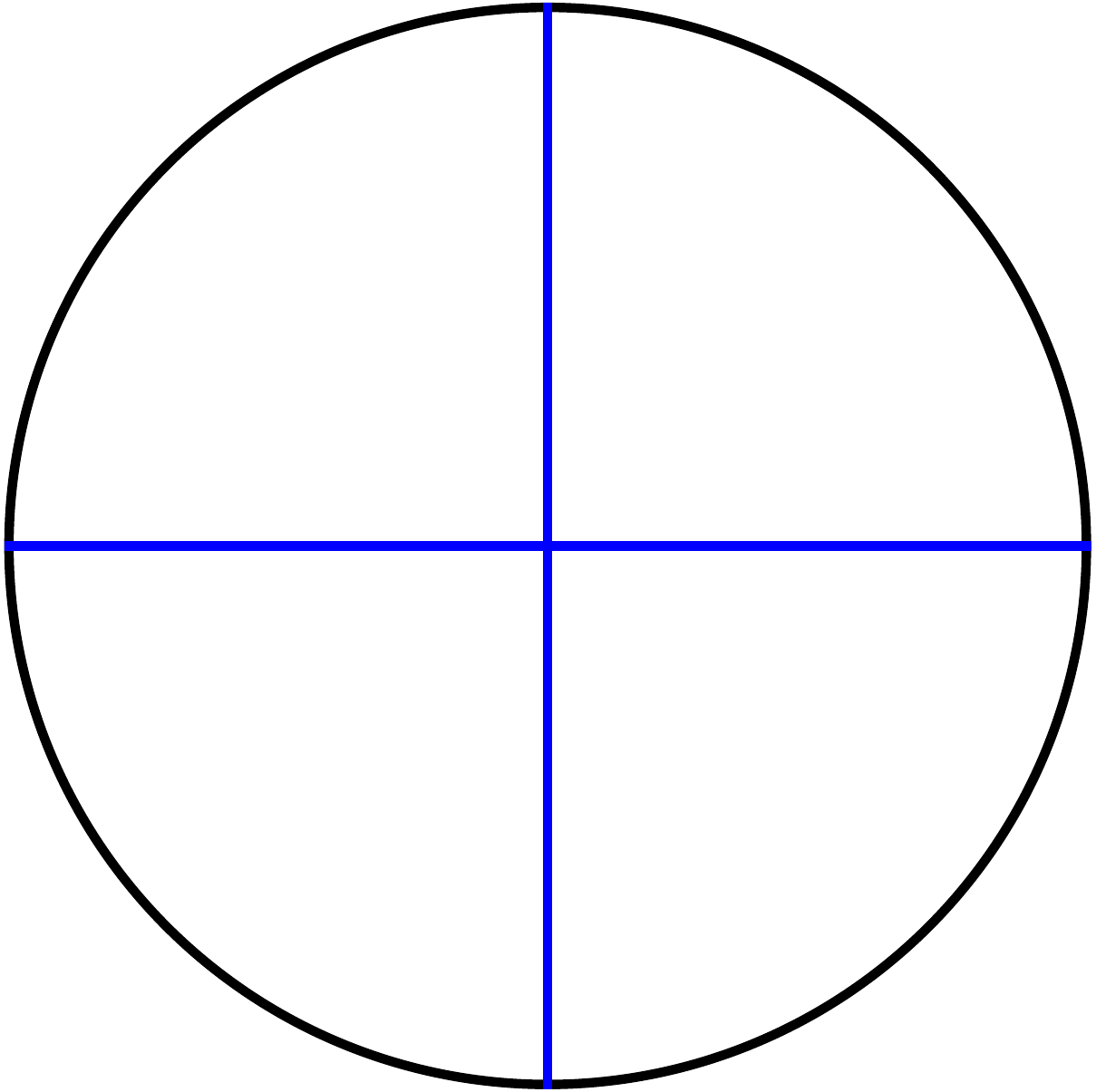}
&\includegraphics[height=1.6cm]{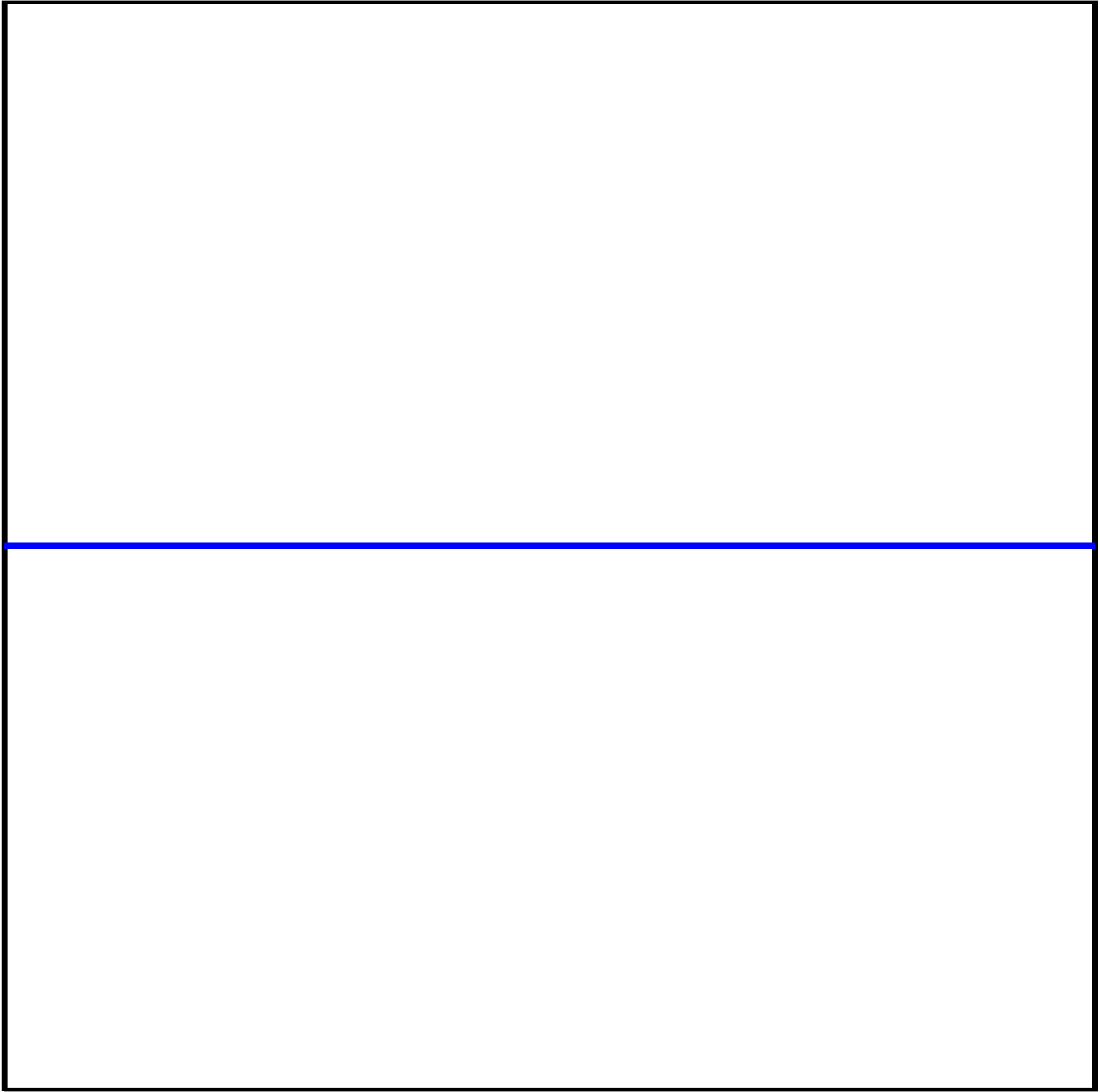}
&\includegraphics[height=1.6cm]{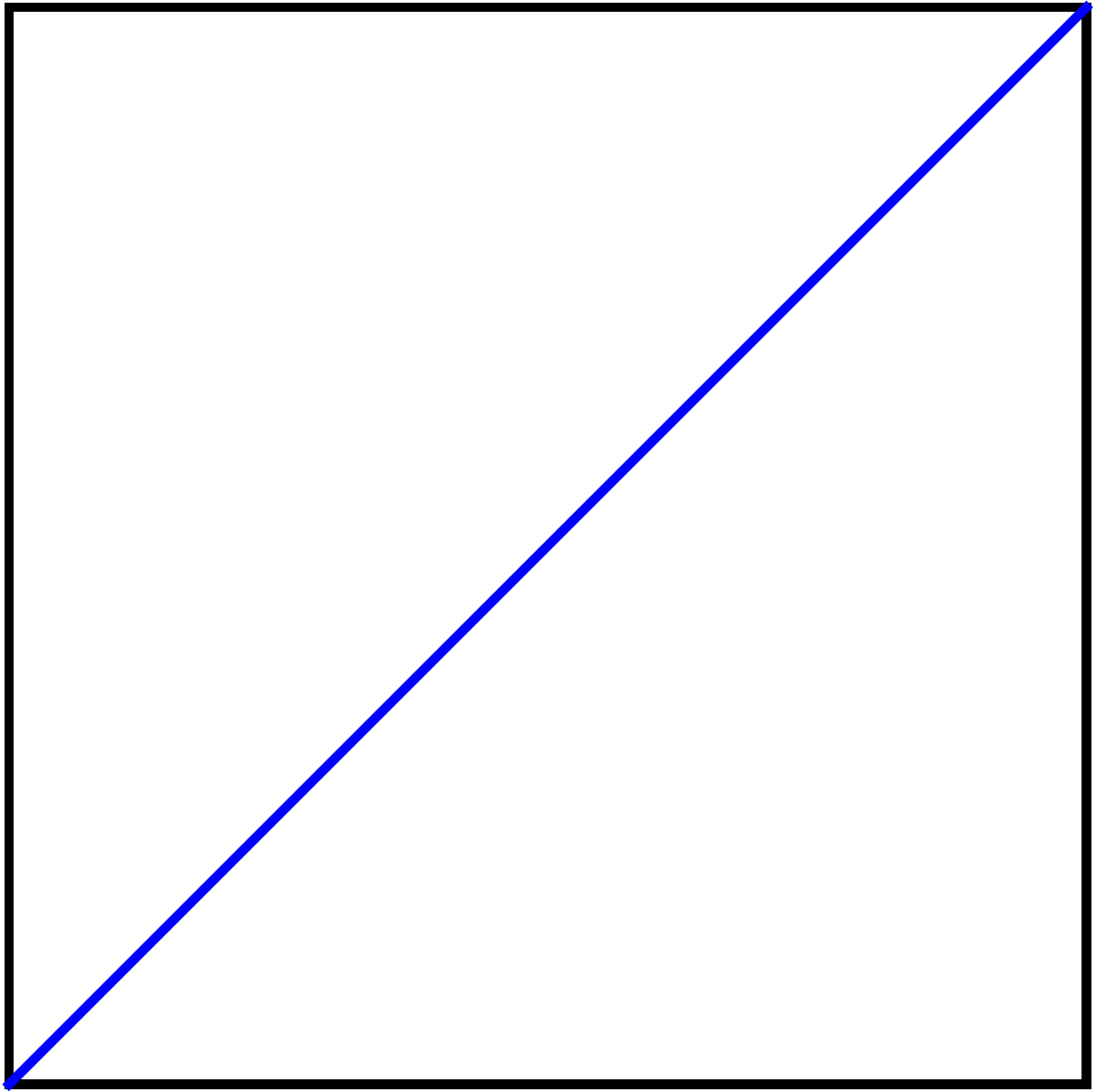}
&\includegraphics[height=1.6cm]{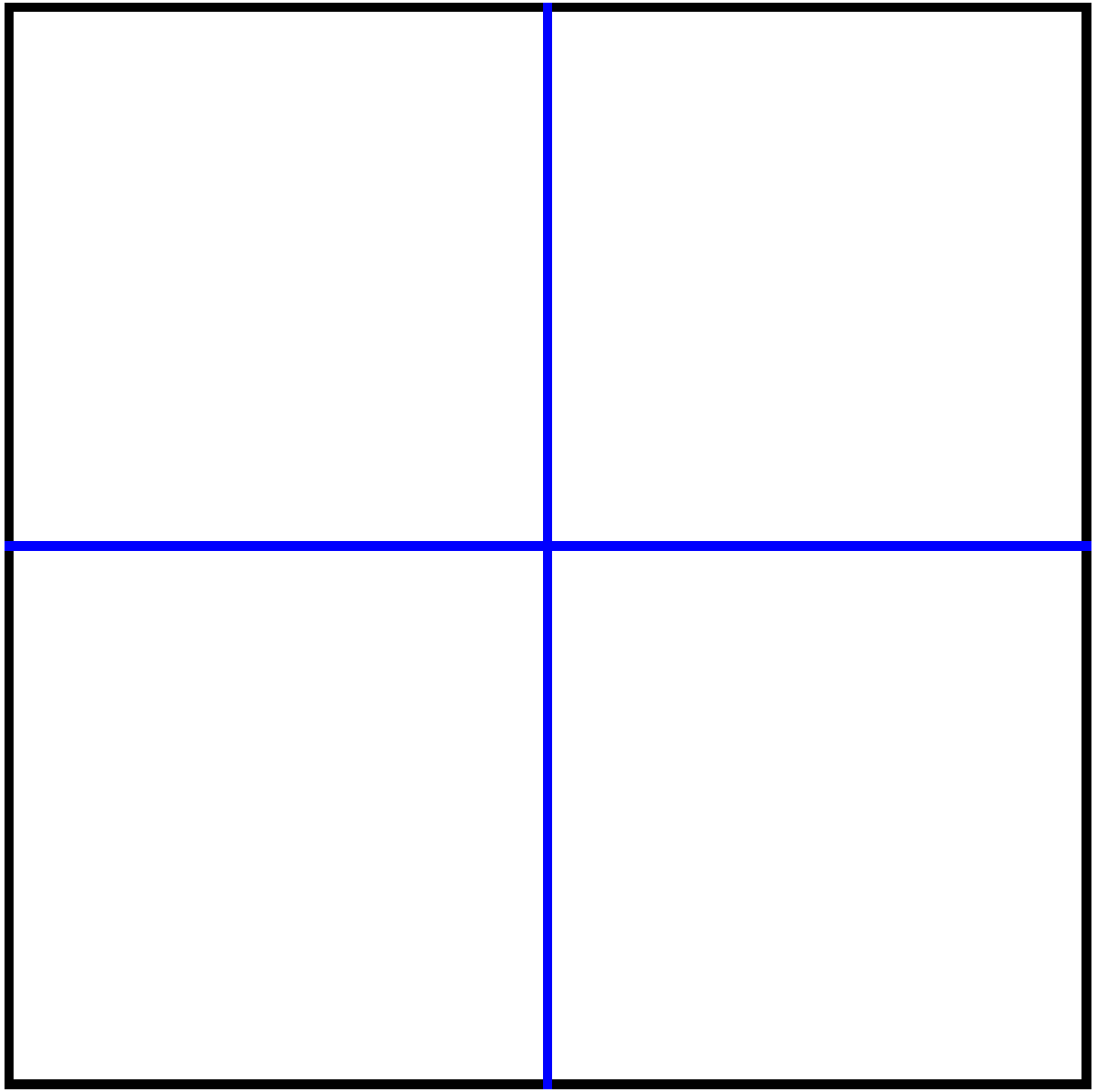}\end{tabular}}
\ \ \subfloat[$3$- and $5$-partition.\label{chapBH.fig.MS}]{\begin{tabular}{p{1.25cm}p{1.25cm}}\includegraphics[height=1.6cm,angle=90]{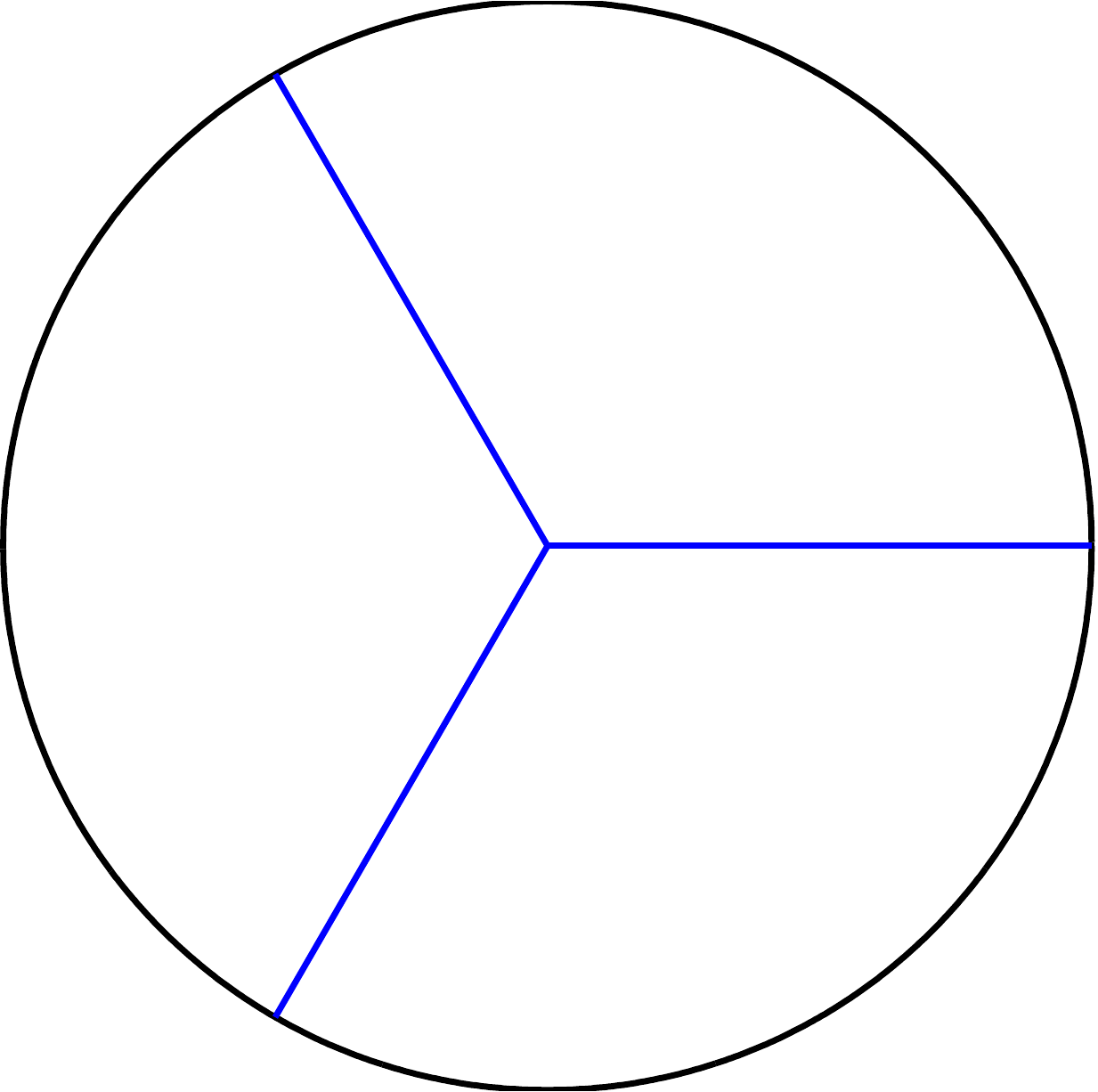}
&\includegraphics[height=1.6cm,angle=90]{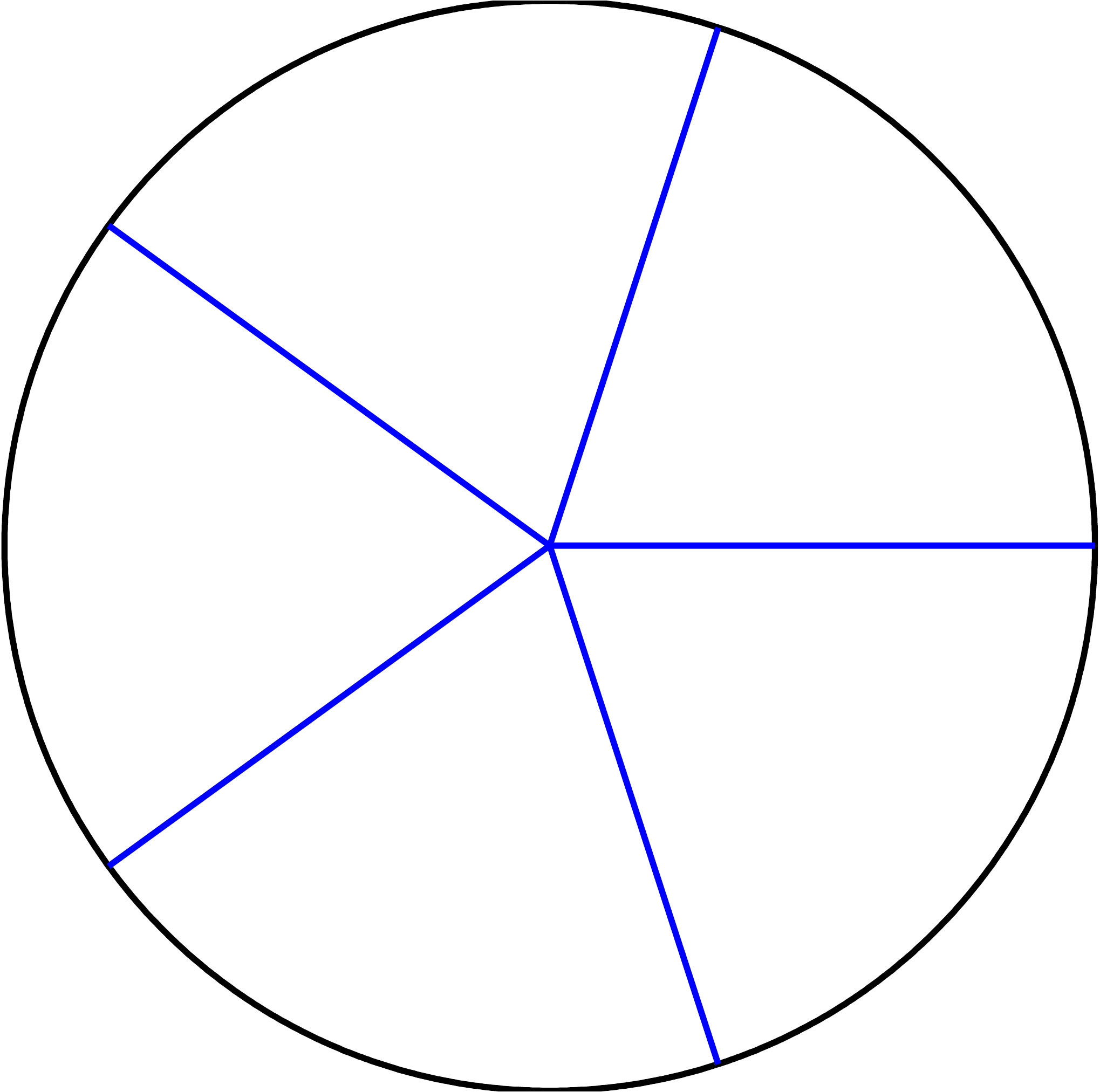}\end{tabular}}
\ \ \subfloat[{$\mathcal D^{\sf perp}$ and $\mathcal D^{\sf diag}$.\label{chapBH.fig.carrecand1}}]{\begin{tabular}{p{1.25cm}p{1.25cm}}\includegraphics[height=1.6cm]{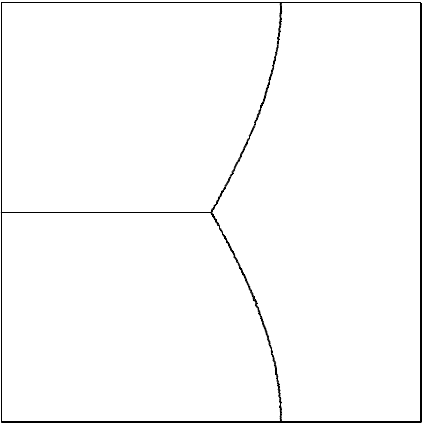}&
\includegraphics[height=1.6cm]{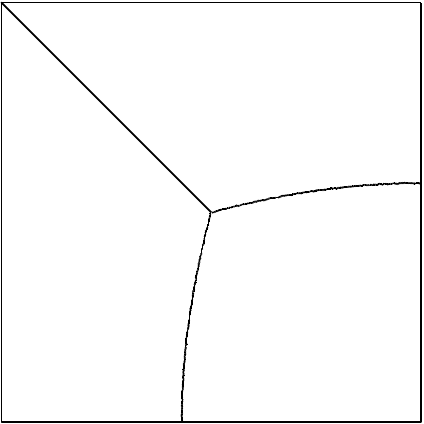}\end{tabular}}
\caption{Partitions of the disk and the square.}
\end{center}
\end{figure}
For other $k$'s, the question is open. Numerical simulations in \cite{MR2836255,MR3093548} permit to exhibit candidates to be minimal $k$-partition of the disk for $k=3,5$. Nevertheless we have no proof that the minimal $ 3$-partition of the disk is the ``Mercedes star'' (see Figure~\ref{chapBH.fig.MS}).

Let us now discuss the $3$-partitions of a square. It is not difficult to see that $\mathfrak L_3$ is strictly less than $ L_3$. 
Numerical computations\footnote{see \href{http://w3.bretagne.ens-cachan.fr/math/simulations/MinimalPartitions/}{\sf http://w3.ens-rennes.fr/math/simulations/MinimalPartitions/}} in \cite{MR2598097} produce natural candidates for a symmetric minimal $3$-partition.
Two candidates $\mathcal D^{\sf perp}$ and $\mathcal D^{\sf diag}$ are obtained numerically by choosing the symmetry axis (perpendicular bisector or diagonal line) and represented in Figure \ref{chapBH.fig.carrecand1}. Numerics suggests that there is no candidate of type [b] or [c], that the two candidates $\mathcal D^{\sf perp}$ and $\mathcal D^{\sf diag}$ have the same energy and that the center is the unique singular point of the partition inside the square. Once this last property is accepted, one can perform the spectral analysis of an Aharonov-Bohm operator\index{Aharonov-Bohm operator} (see Section~\ref{chapBH.s8}) with a pole at the center. This point of view is explored numerically in a rather systematic way by Bonnaillie-No\"el--Helffer \cite{MR2836255} and theoretically by Noris-Terracini \cite{MR2815036} (see also \cite{MR3270167}). 
This could explain why the two partitions $\mathcal D^{\sf perp}$ and $\mathcal D^{\sf diag}$ have the same energy.
\begin{figure}[h!bt]
\begin{center}
\includegraphics[height=1.25cm]{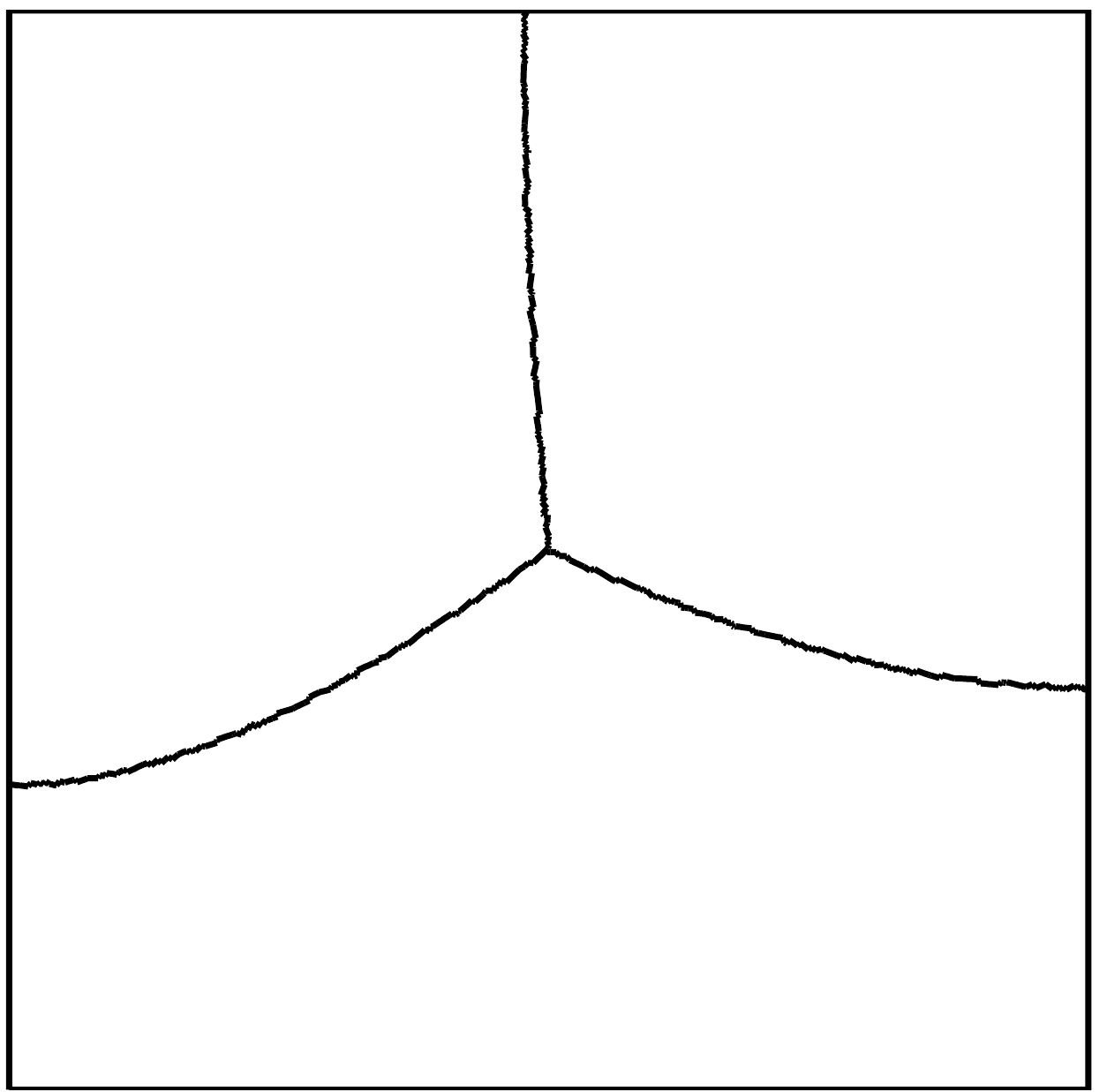}
\includegraphics[height=1.25cm]{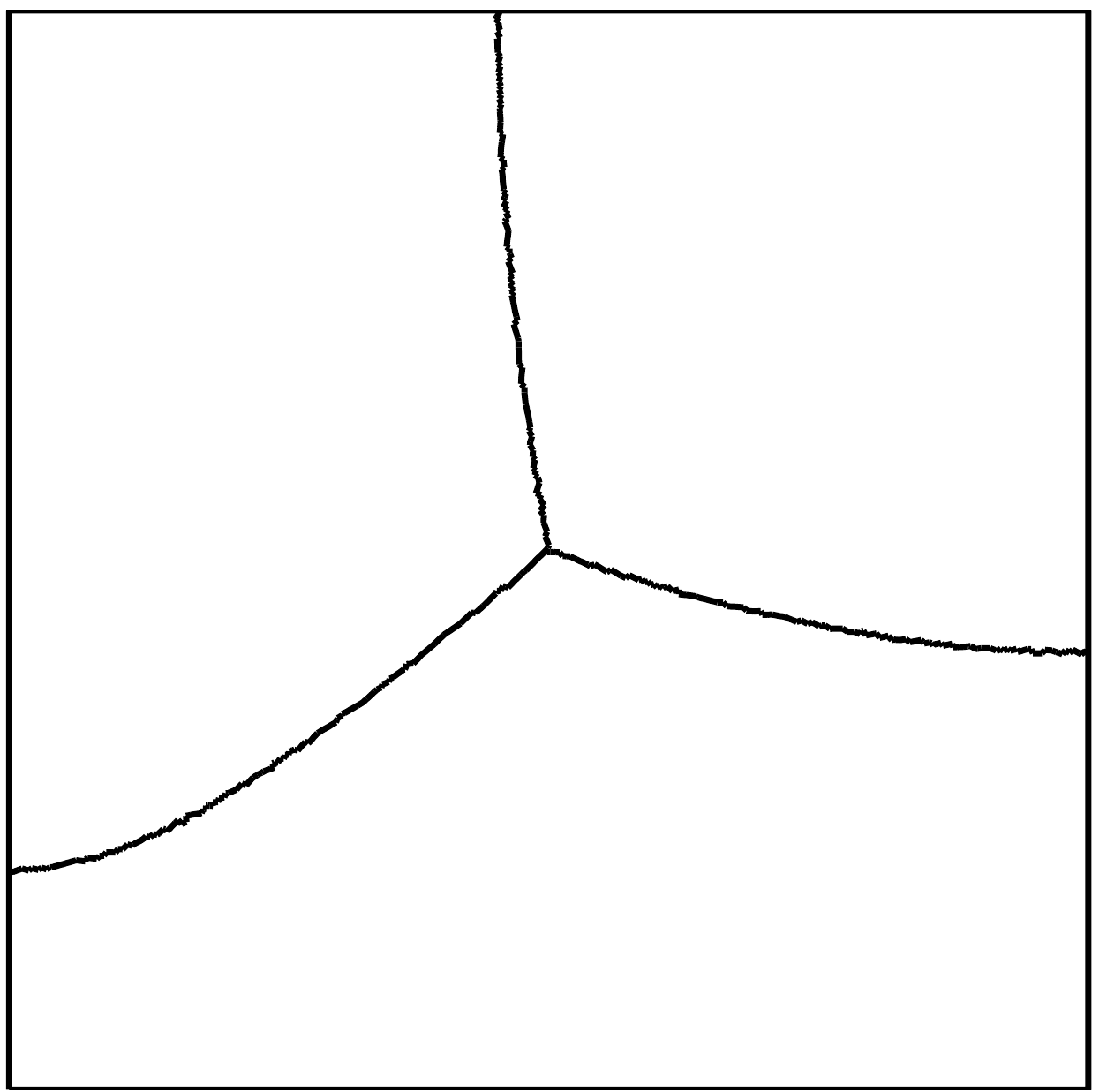}
\includegraphics[height=1.25cm]{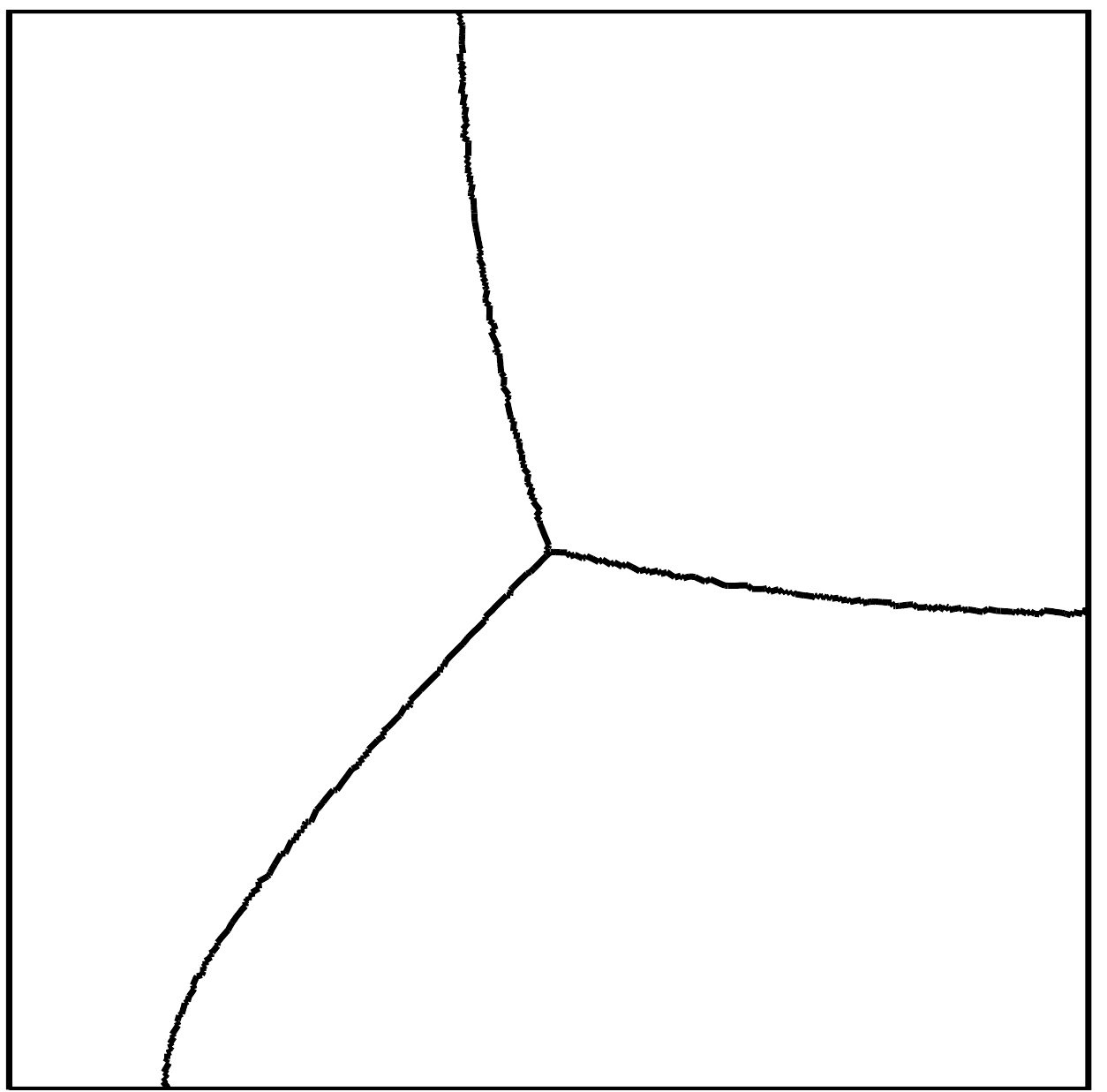}
\includegraphics[height=1.25cm]{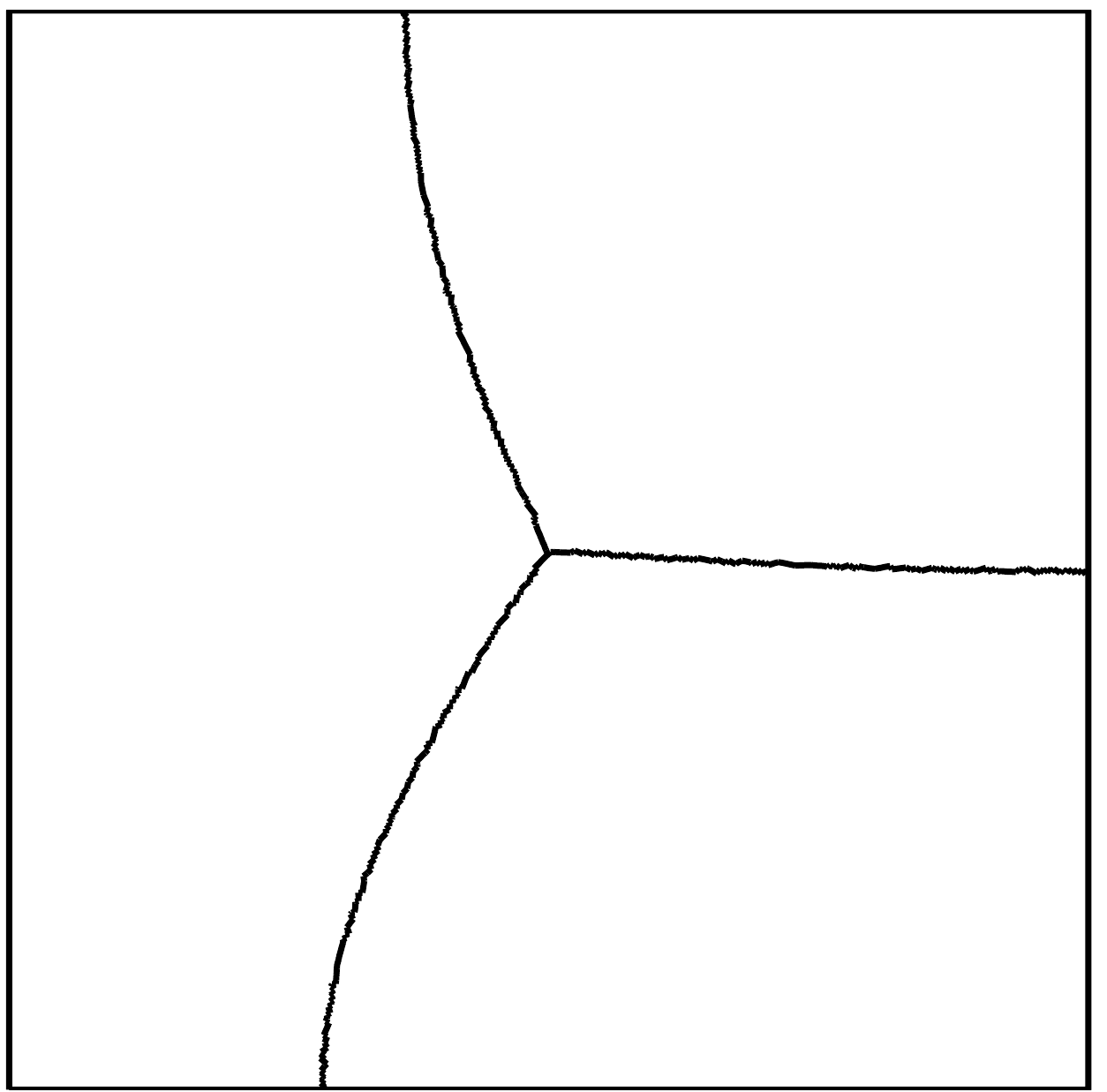}
\includegraphics[height=1.25cm]{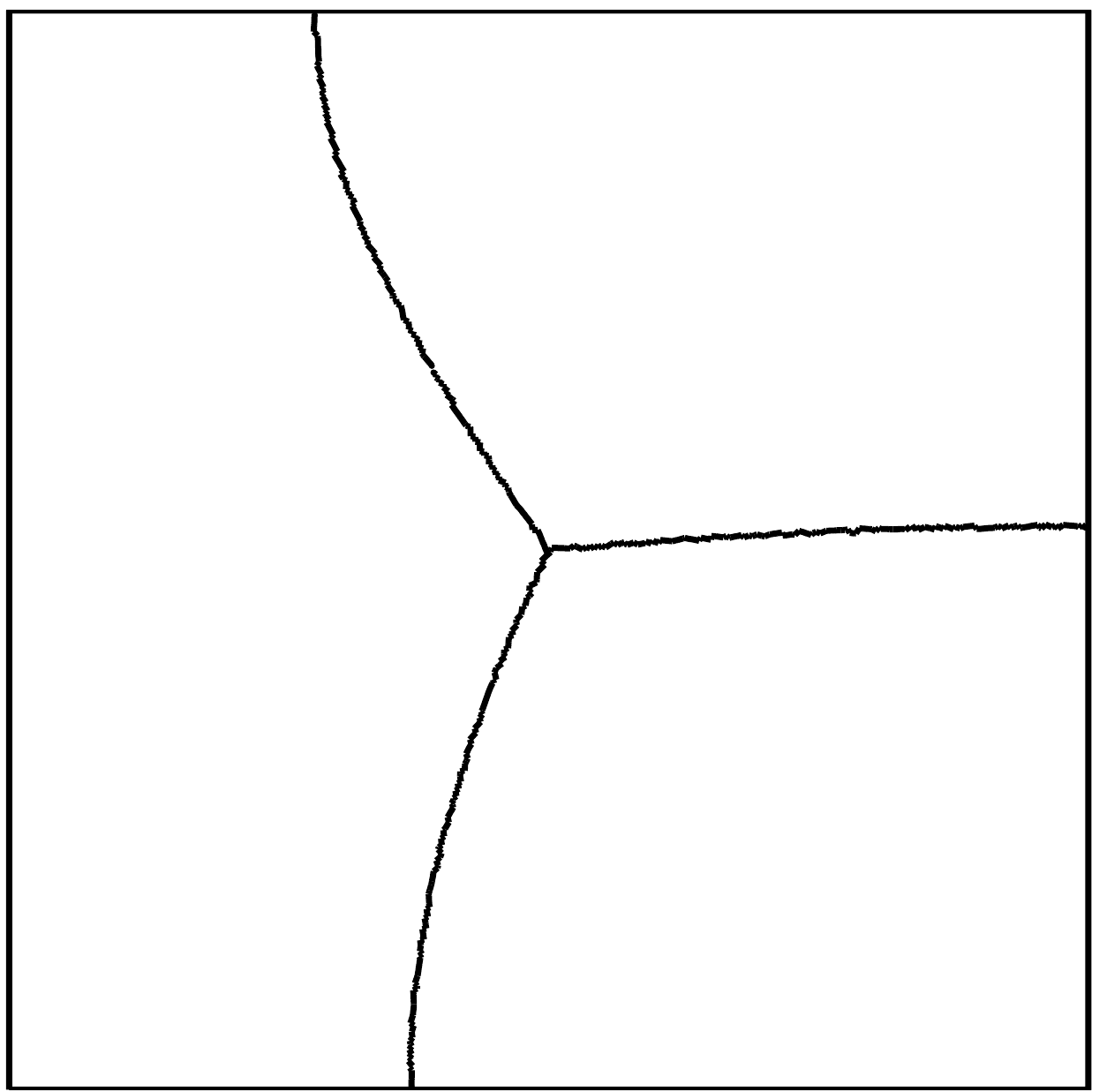}
\includegraphics[height=1.25cm]{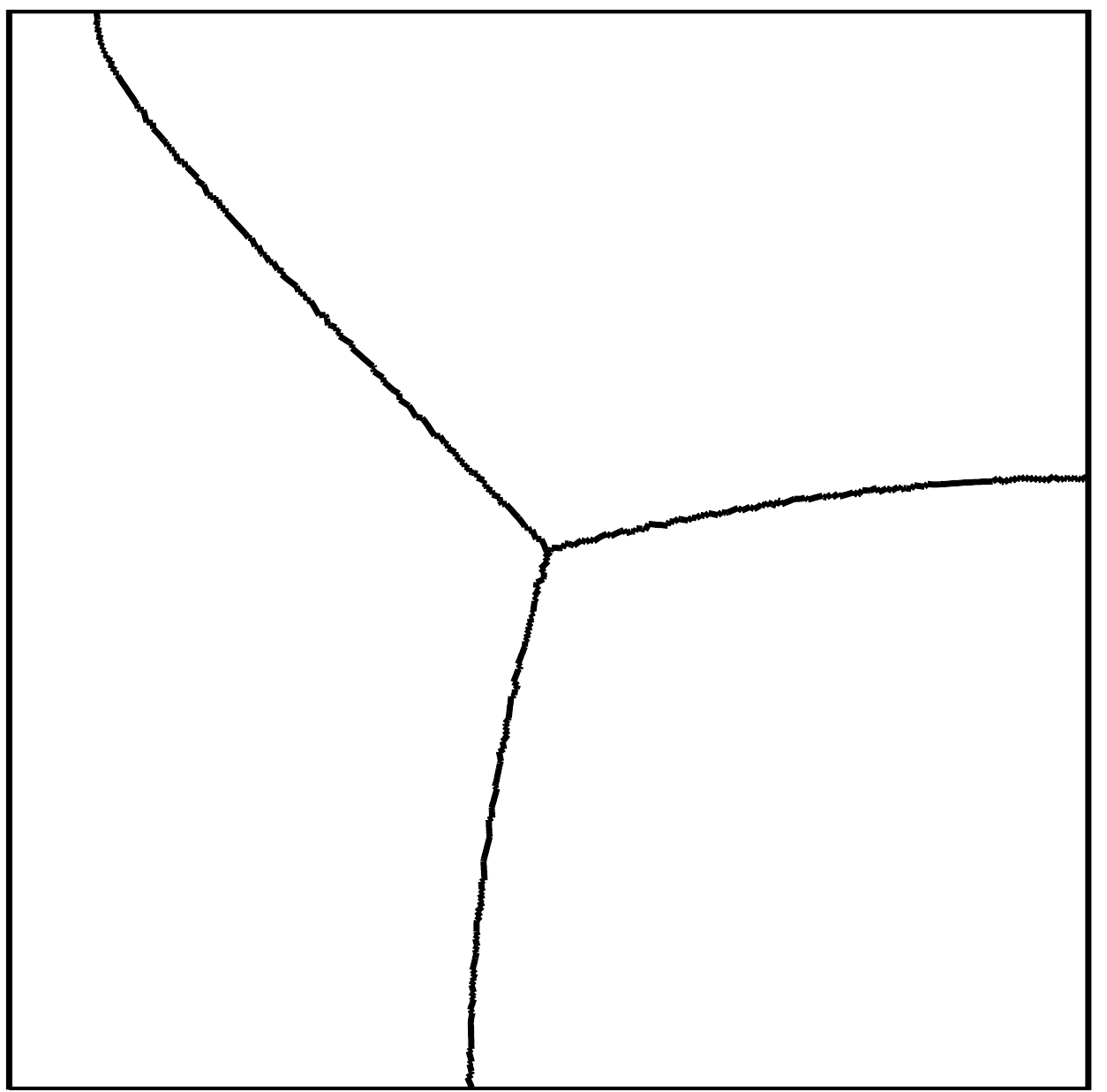}
\includegraphics[height=1.25cm]{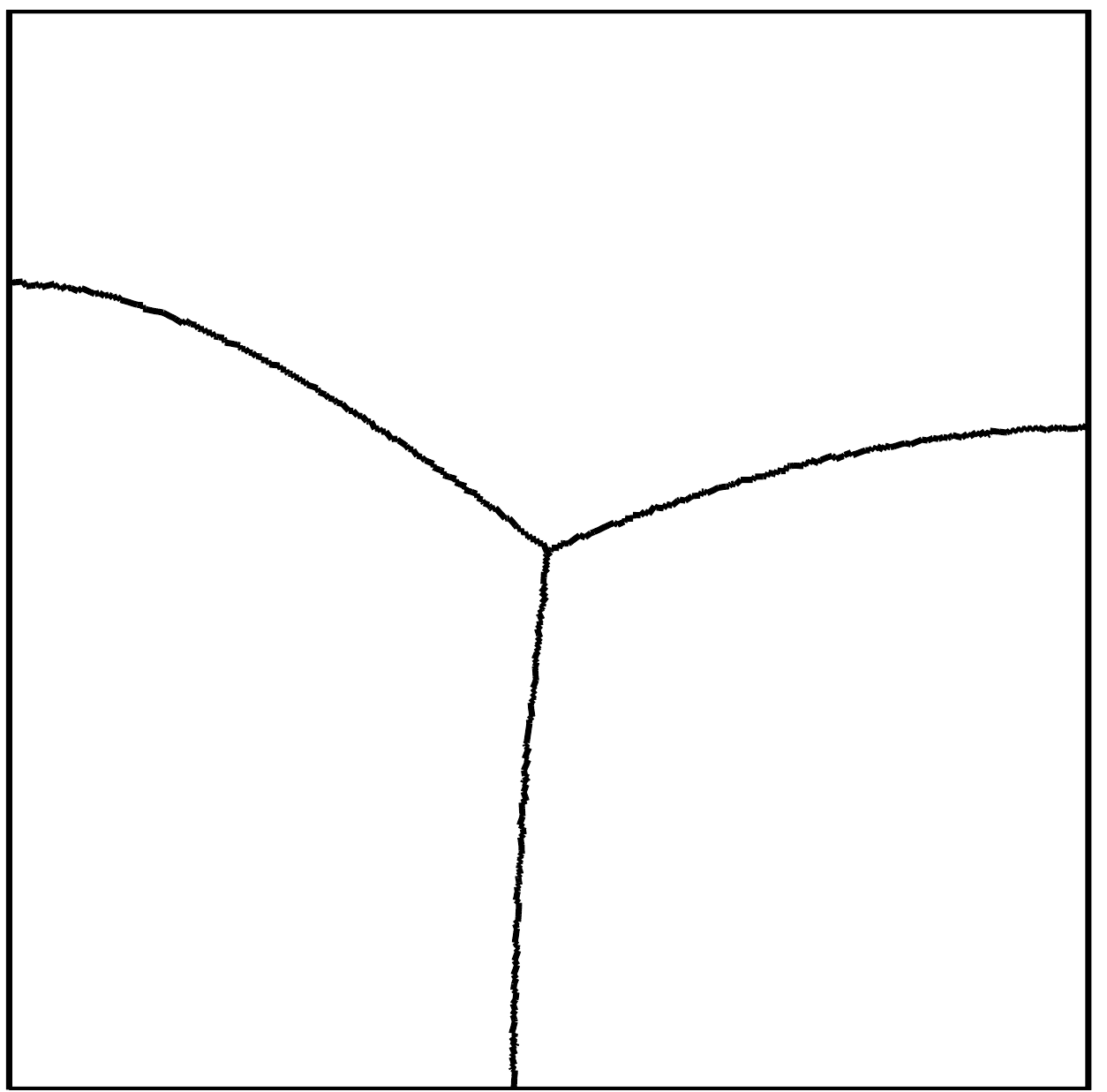}
\includegraphics[height=1.25cm]{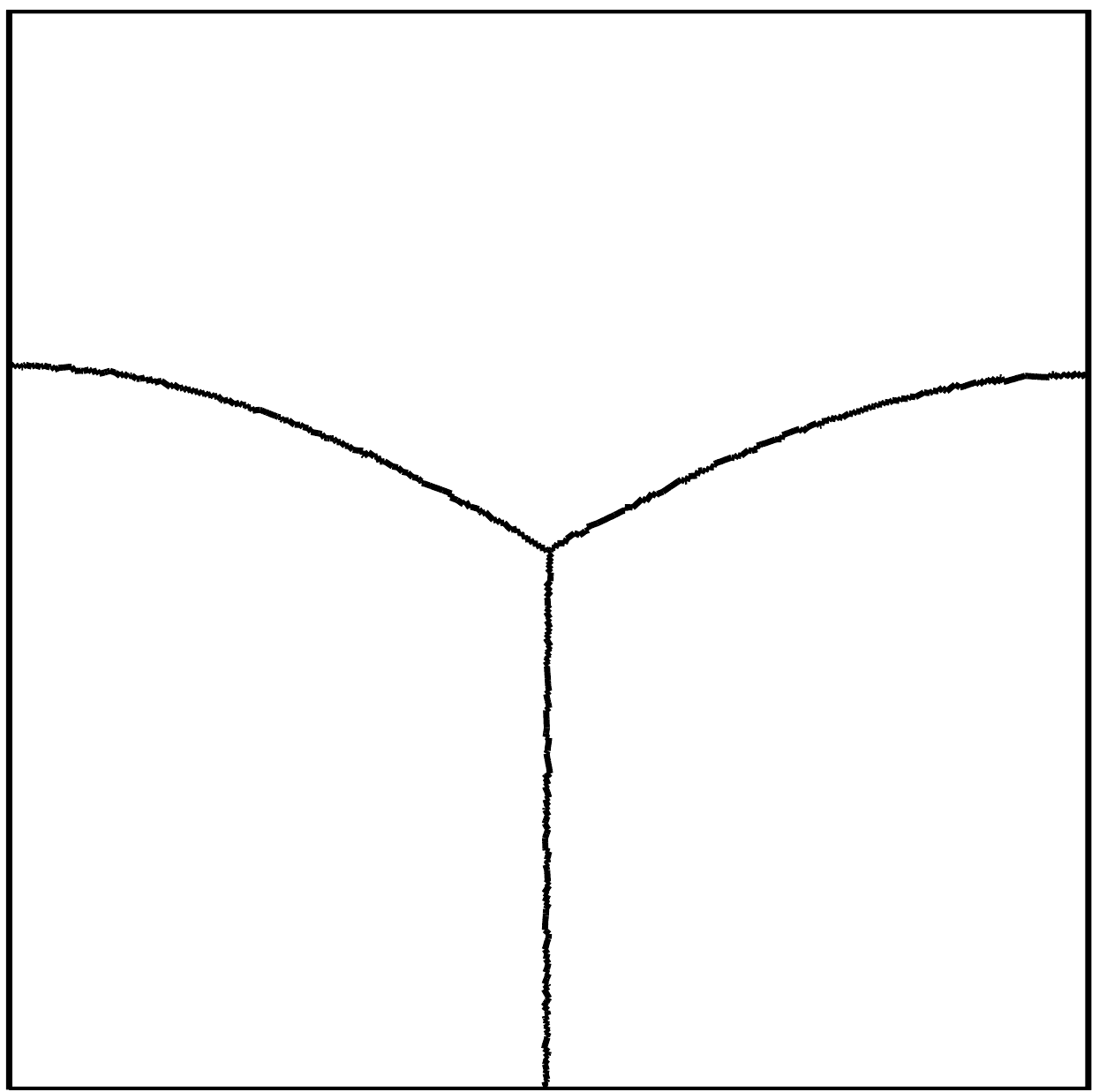}
\includegraphics[height=1.25cm]{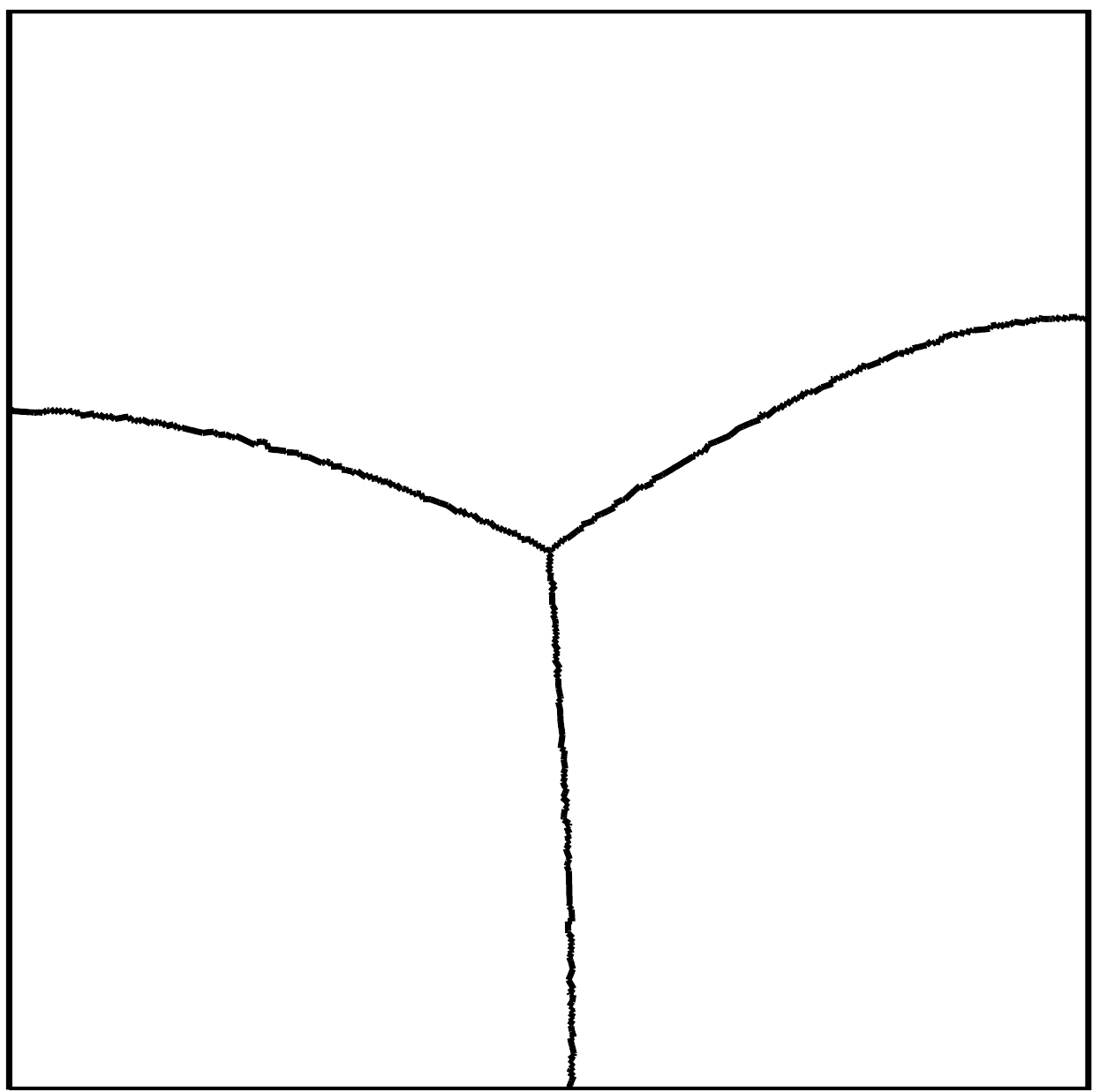}
\caption{A continuous family of $3$-partitions with the same energy.\label{chapBH.fig.famille3part}}
\end{center}
\end{figure}
Moreoever this suggests that there is a continuous family of minimal $3$-partitions of the square. This is illustrated in Figure~\ref{chapBH.fig.famille3part}. In the formalism of the Aharonov-Bohm operator, the basic remark is that this operator has an eigenvalue of multiplicity $2$ when the pole is at the center. We refer to \cite{MR2591197, MR2836255} for further discussion.

\section{Aharonov-Bohm operators and minimal partitions }\label{chapBH.s8}
The introduction of Aharonov-Bohm operators in this context is an example of ``physical mathematics''. There is no magnetic field in our problem and it is introduced artificially. But the idea comes from \cite{MR1690957}, which was motivated by a problem in superconductivity in non simply connected domains introduced by Berger and Rubinstein in \cite{MR1690956}.

\subsection{Aharonov-Bohm effect}
The Aharonov-Bohm effect \cite{MR0110458} is one of the basic effects explained by quantum mechanics but usually refers to an experiment related to scattering theory. According to Google Scholar, there is a huge literature in Mathematics devoted to the analysis of this effect
 starting with Ruisjenaars \cite{MR701261}.  Another effect is related to bound states and gives in some sense a refined version of the diamagnetic effect.
In the non simply connected 2D-cases, when the magnetic field is identically $0$,  the circulations (modulo $2\pi \mathbb Z^d$) around each hole appear consequently as  the unique relevant quantities. The limiting case when the holes are points will be our most important case. If we consider in $\Omega$ the Dirichlet realization $H_{{\bf A},V}$ of 
 $$
 \sum_j (D_{x_j} -A_j)^2 + V\,,\quad\mbox{ with }D_{x_j} = -i \partial_{x_j}\,, 
$$
the celebrated diamagnetic inequality due to Kato says:
\begin{equation}\label{diamD}
\inf \;\sigma \left(H_{{\bf A},V}  \right)  \geq \inf \;\sigma \left( H_{{\bf 0},V} \right)\;.
\end{equation}
This inequality admits a kind of converse, showing its optimality (Lavine-O'Caroll-Helffer) (see the presentation in \cite{MR2662319}).
\begin{proposition}\label{Prop2.6}~\\
Suppose that $\Omega \Subset \mathbb R^2$,  ${\bf A}\in C^1(\overline{\Omega})$ and  $V\in L^\infty(\Omega)$.
Let $\lambda_{{\bf A},V}$ be the ground state of $H_{{\bf A},V}$, then
 the three properties are equivalent
\begin{enumerate}
\item $H_{{\bf A},V}$ and $H_{{\bf 0},V}$ are unitary equivalent ; 
\item $\lambda_{{\bf A},V}=\lambda_{{\bf 0},V}$ ;
\item ${\bf A}$ satisfies the two conditions 
${\rm curl}\  {\bf A}=0\;$ and
$\frac{1}{2\pi} \int_\gamma {\bf A} \in \mathbb Z$
 on any closed path $\gamma$ in $\Omega\,$, where $\int_\gamma {\bf A}$ denotes the circulation of ${\bf A}$ along $\gamma\,$. 
\end{enumerate}
\end{proposition}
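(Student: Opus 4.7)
The plan is to prove the equivalence as a cycle. The implication (1) $\Rightarrow$ (2) is immediate since unitary equivalence preserves the entire spectrum, and in particular its infimum. For (3) $\Rightarrow$ (1), I would fix a base point $x_0 \in \Omega$ and, using ${\rm curl}\,{\bf A} = 0$, set $\phi(x) = \int_{\gamma_x}{\bf A}$ where $\gamma_x$ is any path from $x_0$ to $x$ in $\Omega$. Thanks to the quantization assumption $\frac{1}{2\pi}\int_\gamma {\bf A} \in \mathbb Z$ on every closed loop, $\phi$ is well defined modulo $2\pi$, so the multiplication operator $U := e^{i\phi}$ is a single-valued smooth unitary on $L^2(\Omega)$. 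A direct computation gives $U^{-1}(D_{x_j}-A_j)U = D_{x_j}$ while $U$ commutes with $V$, so $U^{-1} H_{{\bf A},V} U = H_{{\bf 0},V}$.

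The substantive implication is (2) $\Rightarrow$ (3), which I would deduce from the equality case of the diamagnetic inequality. Let $\psi$ be a normalized ground state of $H_{{\bf A},V}$. The pointwise inequality $|\nabla |\psi||(x) \leq |(\nabla - i{\bf A})\psi(x)|$ combined with the variational principle for $\lambda_{{\bf 0},V}$ yields
\[
\lambda_{{\bf 0},V} \leq \int_\Omega \bigl(|\nabla|\psi||^2 + V|\psi|^2\bigr)\,dx \leq \int_\Omega \bigl(|(\nabla - i{\bf A})\psi|^2 + V|\psi|^2\bigr)\,dx = \lambda_{{\bf A},V}.
\]
The hypothesis $\lambda_{{\bf A},V} = \lambda_{{\bf 0},V}$ forces both inequalities to be equalities: $|\psi|$ is a nonnegative ground state of $H_{{\bf 0},V}$, hence coincides with the unique positive ground state $\psi_0 > 0$ in $\Omega$ (by Perron--Frobenius and the strong maximum principle).

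Since $\psi_0 > 0$, the ratio $\psi/\psi_0$ has modulus $1$ and is smooth, so locally $\psi = e^{i\phi}\psi_0$ for a smooth real phase $\phi$. Expanding
\[
\bigl|(\nabla - i{\bf A})\psi\bigr|^2 = |\nabla \psi_0|^2 + |\nabla \phi - {\bf A}|^2 \psi_0^2
\]
and comparing with the saturated pointwise diamagnetic inequality (which must then hold a.e.) forces $\nabla \phi = {\bf A}$ in $\Omega$, whence ${\rm curl}\,{\bf A} = 0$. Finally, since $\psi = e^{i\phi}\psi_0$ is single-valued on $\Omega$, the a priori multivalued function $\phi$ has holonomy in $2\pi\mathbb Z$ around any closed loop $\gamma\subset\Omega$; but this holonomy equals $\int_\gamma \nabla\phi = \int_\gamma {\bf A}$, giving the required quantization.

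The main obstacle is handling the possibly multiply connected topology of $\Omega$ rigorously: the phase $\phi$ is only locally well defined, and one must justify both that its gradient glues to the globally defined ${\bf A}$ and that its holonomies truly reproduce the circulations. I would manage this by covering $\Omega$ with simply connected patches, producing single-valued smooth branches of $\phi$ on each (using that $\psi_0>0$ so $\psi/\psi_0$ takes values in the unit circle and admits a continuous logarithm on each patch), and then noting that on any overlap two branches differ by an integer multiple of $2\pi$; summing these jumps along a loop recovers the circulation of ${\bf A}$ as an element of $2\pi\mathbb Z$.
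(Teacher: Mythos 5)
The paper itself gives no proof of this proposition: it attributes the result to Lavine--O'Carroll--Helffer and refers the reader to the presentation in the Fournais--Helffer book \cite{MR2662319}. Your argument is precisely the standard one given there --- the cycle $(3)\Rightarrow(1)\Rightarrow(2)$ via an explicit gauge transformation $e^{i\phi}$, and the substantive step $(2)\Rightarrow(3)$ via the equality case of the pointwise diamagnetic inequality, identification of $|\psi|$ with the positive ground state of $H_{{\bf 0},V}$, and the holonomy argument for the flux quantization --- and it is correct. The only cosmetic caveat is the word ``smooth'' for $\psi/\psi_0$: with $V\in L^\infty$ elliptic regularity only gives $C^{1,\alpha}_{loc}$, but that (indeed, $H^1_{loc}$ together with $|\psi|>0$) is all your local lifting and the identity $|(\nabla-i{\bf A})\psi|^2=|\nabla\psi_0|^2+|\nabla\phi-{\bf A}|^2\psi_0^2$ actually require.
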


\subsection{Aharonov-Bohm operators} 
Let $\Omega$ be a planar domain and ${\bf p}=(p_{1},p_{2})\in\Omega$. Let us consider the Aharonov-Bohm Laplacian\index{Aharonov-Bohm operator} in a punctured domain $\dot\Omega_{{\bf p}}:=\Omega\setminus\{{\bf p}\}$ with a singular magnetic potential and normalized flux $\alpha$. We first introduce
$${\mathbf A}^{\bf p}({\bf x})=(A_{1}^{\bf p}({\bf x}),A_{2}^{\bf p}({\bf x})) = \frac{({\bf x}-{\bf p})^\perp}{|{\bf x}-{\bf p}|^2},\qquad\mbox{ with }\quad {\bf y}^\perp=(-y_{2},y_{1})\,.$$
This magnetic potential satisfies
$${\rm curl}\ {\mathbf A}^{\bf p}({\bf x})=0\quad\mbox{ in } \dot\Omega_{{\bf p}}\,.$$
If ${\bf p} \in \Omega$, its circulation along a path of index $1$ around ${\bf p}$ is $2\pi$ (or the flux created by ${\bf p}$). If ${\bf p} \not\in \Omega$, $\mathbf A^{\bf p} $ is a gradient and the circulation along any path in $\Omega$ is zero. From now on, we renormalize the flux by dividing it by $2\pi$.\\
The Aharonov-Bohm Hamiltonian with singularity ${\bf p}$ and flux $\alpha$, written for brevity $H^{AB}(\dot \Omega_{{\bf p}},\alpha)$, is defined by considering the Friedrichs extension starting from $ C_0^\infty(\dot \Omega_{{\bf p}})$ and the differential operator
\begin{equation}
-\Delta_{{\boldsymbol{\alpha}} {\bf A}^{\bf p}} := (D_{x_{1}} - \alpha A_{1}^{\bf p})^2 + (D_{x_{2}}-\alpha A_{2}^{\bf p})^2\,. 
\end{equation}
This construction can be extended to the case of a configuration with $\ell$ distinct points ${\bf p}_1,\dots, {\bf p}_\ell$ (putting a flux $\alpha_{j}$ at each of these points). We just take as magnetic potential
$$ {\bf A}_{\boldsymbol{\alpha}}^{\bf P} = \sum_{j=1}^\ell \alpha_j {\mathbf A}^{{\bf p}_j}\,, \qquad\mbox{ where }\quad {\bf P}=({\bf p}_1,\dots,{\bf p}_\ell)\quad\mbox{ and }\quad{\boldsymbol{\alpha}}=(\alpha_{1},\ldots,\alpha_{\ell}),$$
and consider the operator in $\dot \Omega_{\bf P}:=\Omega \setminus \{{\bf p}_1,\dots,{\bf p}_\ell \}$. 
Let us point out that the ${\bf p}_j$'s can be in $\mathbb R^2\setminus \Omega$, and in particular in $\partial \Omega$. It is important to observe that
if ${\boldsymbol{\alpha}} ={\boldsymbol{\alpha}}'$ modulo $\mathbb Z^\ell $, then $H^{AB}(\dot \Omega_{{\bf P}},{\boldsymbol{\alpha}})$ and $H^{AB}(\dot \Omega_{{\bf P}},{\boldsymbol{\alpha}}')$ are unitary equivalent.

\subsection{The case when the fluxes are $1/2$}
Let us assume for the moment that there is a unique pole $\ell=1$ and suppose that the flux $\alpha$ is $1/2$. For brevity, we omit $\alpha$ in the notation when it equals $1/2$. Let $ K_{{\bf p}}$ be the antilinear operator 
\begin{equation}\label{defKp}
 K_{{\bf p}} = {\rm e}^{i \theta_{{\bf p}}} \; \Gamma\,, 
 \end{equation}
 where $\Gamma$ is the complex conjugation operator $\Gamma u = \bar u\,$ and 
$\theta_{\bf p}$ is such that
$d\theta_{\bf p}= 2 {\bf A} ^{\bf p}\,.$\\
We note that, because the normalized flux of $2 {\bf A} ^{\bf p}$ belongs to $\mathbb Z$ for any path in $\dot \Omega_{{\bf p}}$, the function  ${\bf x} \mapsto \exp i \theta_{\bf p}({\bf x})$ is $C^\infty$. A function $ u$ is called $K_{{\bf p}}$-real, if $K_{{\bf p}} u =u\,.$ The operator  $H^{AB}(\dot \Omega_{{\bf p}})=H^{AB}(\dot \Omega_{{\bf p}},\frac 12)$ is preserving the $ K_{{\bf p}}$-real functions. Therefore we can consider a basis of $K_{{\bf p}}$-real eigenfunctions. Hence we only analyze the restriction of $H^{AB}(\dot \Omega_{{\bf p}})$ to the $ K_{{\bf p}}$-real space $ L^2_{K_{{\bf p}}}$ where
$$
 L^2_{K_{{\bf p}}}(\dot{\Omega}_{{\bf p}})=\{u\in L^2(\dot{\Omega}_{{\bf p}}) \;:\; K_{{\bf p}}\,u =u\,\}\,.
$$
If there are several poles ($\ell>1$) and ${\boldsymbol{\alpha}} = (\frac 12,\dots, \frac 12)$, we can also construct the antilinear operator $ K_{\bf P}$, where $\theta_{\bf p}$  in \eqref{defKp} is replaced by
\begin{equation}\label{chapBH.defTheta}
\Theta_{{\bf P}} = \sum_{j=1}^\ell \theta_{{\bf p}_j}\,.
\end{equation}

\subsection{Nodal sets of $K_{{\bf P}}$-real eigenfunctions\index{nodal domains}}
As mentioned previously under the half-integer flux condition, we can find a basis of $K_{\bf P}$-real eigenfunctions. It was shown in \cite{MR1690957} and \cite{MR1994668} that the $ K_{{\bf P}}$-real eigenfunctions have a regular nodal set (like the eigenfunctions of the Dirichlet Laplacian) with the exception that, at each singular point ${\bf p}_j$ ($j=1,\dots,\ell$), an odd number $\nu({\bf p}_j)$ of half-lines meet. 
So the only difference with the notion of regularity introduced for minimal partitions is that some $\nu({\bf p}_j)$ can be equal to $1$.\\
The ground state of $H^{AB}(\dot \Omega_{{\bf P}})$ has a very particular structure (see \cite{MR1690956}, \cite{MR1690957} and \cite{HHO2O}).
\begin{proposition}[Slitting property]
If $\mathcal N$ denotes the  zero set of a $K_{{\bf P}}$-real eigenfunction of $H^{AB}(\dot \Omega_{{\bf P}})$ corresponding to the lowest eigenvalue, then 
$\overline{\Omega} \setminus \mathcal N$ is connected. 
\end{proposition}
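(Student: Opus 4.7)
The plan is to adapt the classical simplicity-of-ground-state argument for a Schr\"odinger operator to the Aharonov--Bohm setting on the punctured domain $\dot\Omega_{\bf P}$. The strategy is proof by contradiction combined with weak unique continuation.

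Assume for contradiction that $\overline\Omega \setminus \mathcal N$ admits at least two distinct connected components $D_1, D_2$, and let $\lambda_1$ denote the lowest eigenvalue of $H^{AB}(\dot\Omega_{\bf P})$, associated with $u$. I would consider the truncation $u_1 := u\, \mathbf{1}_{D_1}$, extended by zero on $\Omega \setminus D_1$. Since $\mathcal N$ is piecewise smooth with only finitely many singular points (interior crossings, boundary points, and the poles ${\bf p}_j$), and since $u$ vanishes continuously along $\partial D_1 \subset \mathcal N$, the function $u_1$ lies in the form domain of $H^{AB}(\dot\Omega_{\bf P})$. An integration by parts on $D_1$, using the eigenvalue equation satisfied pointwise in the classical interior $D_1 \subset \dot\Omega_{\bf P}$, shows that the magnetic Rayleigh quotient of $u_1$ equals $\lambda_1$. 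Hence $u_1$ attains the infimum defining $\lambda_1$ and must itself be a ground state eigenfunction of $H^{AB}(\dot\Omega_{\bf P})$.

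Next I would invoke weak unique continuation: by construction $u_1 \equiv 0$ on the open set $D_2 \subset \dot\Omega_{\bf P}$, and on the connected set $\dot\Omega_{\bf P}$ the Aharonov--Bohm operator is a Schr\"odinger-type operator with real-analytic, hence $L^\infty_{\mathrm{loc}}(\dot\Omega_{\bf P})$, coefficients. The standard unique continuation theorem for such operators therefore forces $u_1 \equiv 0$ throughout $\dot\Omega_{\bf P}$. This contradicts $u_1 = u \not\equiv 0$ on $D_1$, so $\overline\Omega \setminus \mathcal N$ has exactly one connected component, as claimed.

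The main technical obstacle I expect is the form-domain membership of $u_1$. The set $\partial D_1$ may touch $\partial\Omega$, cross itself at interior singular points of $\mathcal N$, and terminate at some of the poles ${\bf p}_j$, where $K_{\bf P}$-real eigenfunctions vanish like $r^{k+1/2}$ in local polar coordinates. One must verify that the zero extension across $\partial D_1$ lies in the Friedrichs domain of $H^{AB}(\dot\Omega_{\bf P})$ and that the integration by parts produces no spurious boundary contributions at these singular sites. This is a standard but careful capacity argument relying on the precise local structure of the nodal set of a $K_{\bf P}$-real eigenfunction recalled just before the proposition; everything else in the sketch is routine variational and unique-continuation machinery.
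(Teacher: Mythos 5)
Your argument is correct and is essentially the standard proof from the sources the paper cites for this proposition (\cite{MR1690956,MR1690957,HHO2O}); the paper itself states the slitting property without proof. The cited works run the same variational-plus-unique-continuation scheme, usually phrased by flipping the sign of $u$ on one component of $\overline\Omega\setminus\mathcal N$ rather than truncating to it (which yields a second ground state whose sum or difference with $u$ vanishes on an open set), and the only genuinely delicate point is the one you flag --- membership of the modified function in the Friedrichs form domain near the poles and the singular points of $\mathcal N$ --- which is settled by the Hardy-type inequality for half-integer flux together with the regularity of the nodal set recalled just before the proposition.
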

This is illustrated in Figure~\ref{fig.AB}. In particular, in the case of one pole, the proposition says that  the zero set of a $K_{{\bf P}}$-real  groundstate consists of a line joining the pole and the exterior boundary.
\begin{figure}[h!t]
\begin{center}
\includegraphics[width=2.5cm]{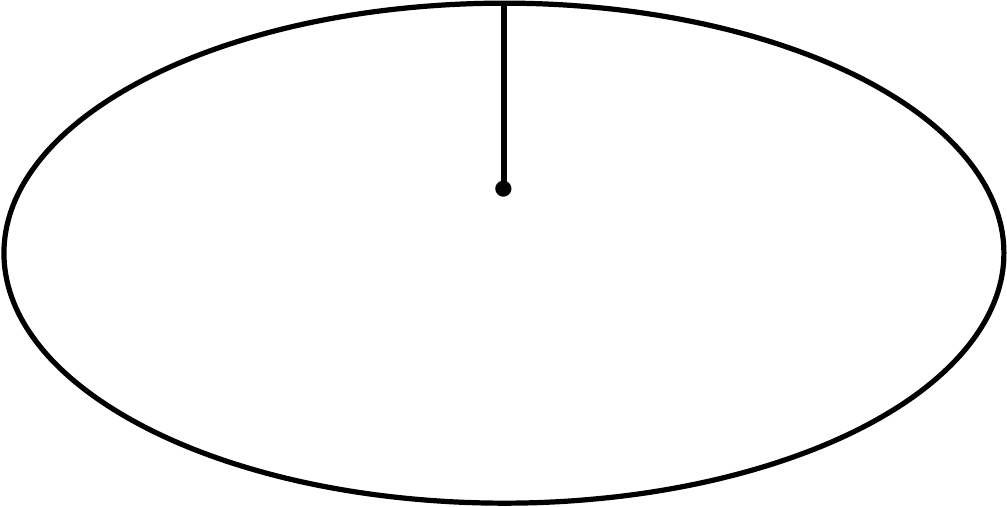}
\includegraphics[width=2.5cm]{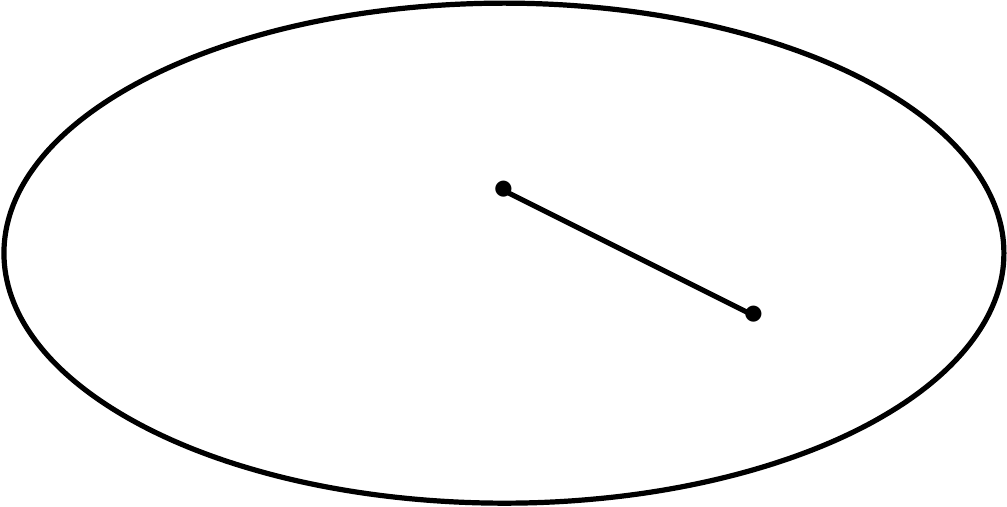}
\includegraphics[width=2.5cm]{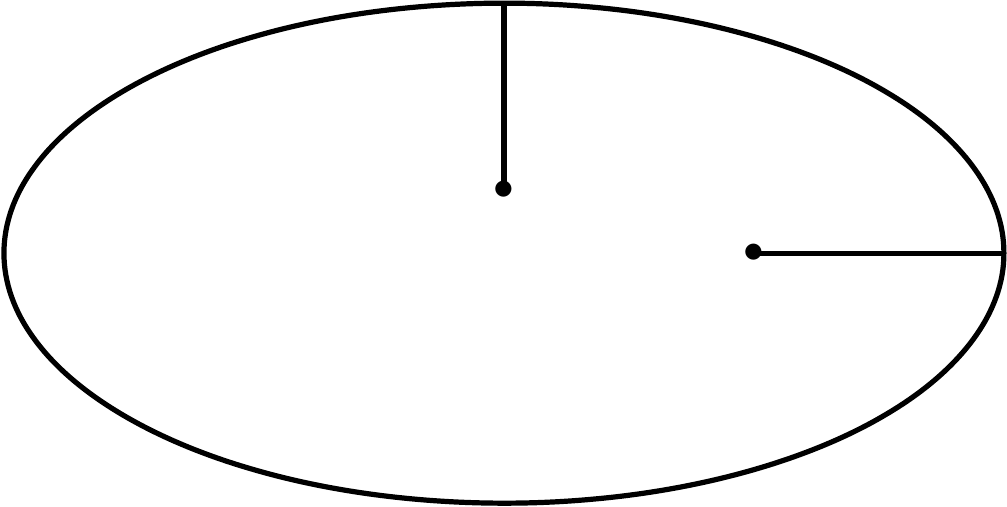}
\includegraphics[width=2.5cm]{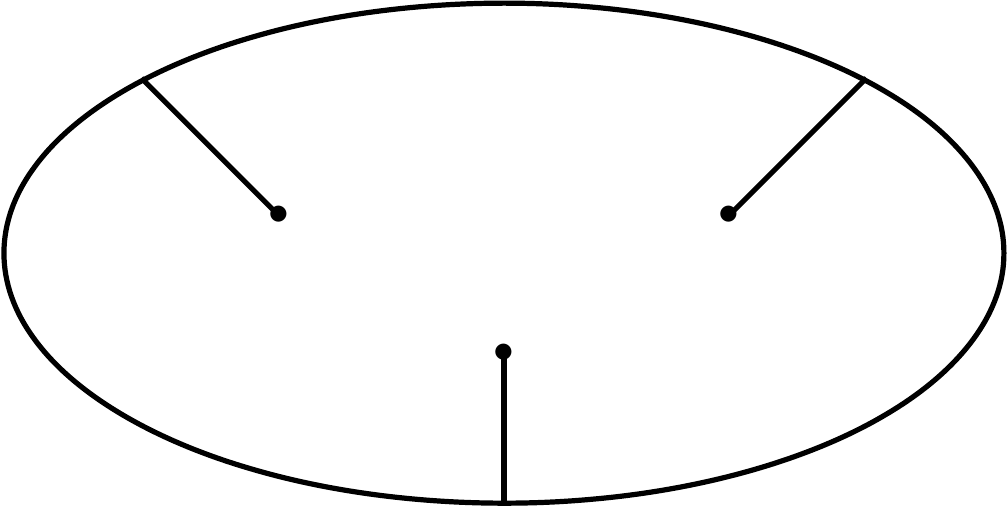}
\includegraphics[width=2.5cm]{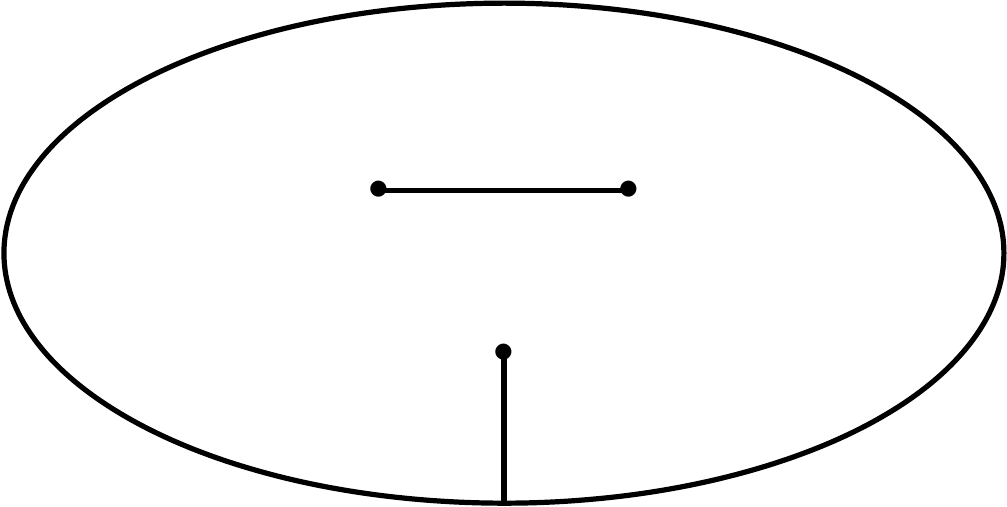}
\includegraphics[width=2.5cm]{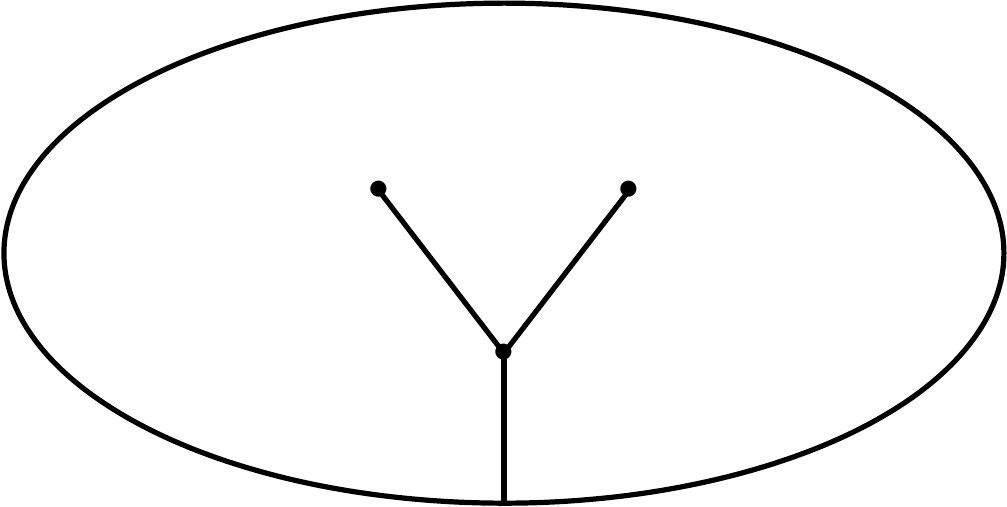}
\caption{Possible topological types of nodal sets in function of the number $\ell $  of poles  ($\ell =1,2,3$).\label{fig.AB}}
\end{center}
\end{figure}

It is also proven in \cite{MR1690957}
\begin{proposition}[Multiplicity]
   The multiplicity $m$ of the first eigenvalue  satisfies
    \begin{equation}\label{eqn:mult}
      m\le\begin{cases}
        2\,,&\mbox{ for } \ell=1,2\,,\\
        \ell\,,& \mbox{ for } \text{$\ell $ odd,  $\ell \geq3\,$},\\
        \ell -1\,,\quad& \mbox{ for }  \text{$\ell $ even, $\ell \geq 4\,$}.
      \end{cases}
    \end{equation}
\end{proposition}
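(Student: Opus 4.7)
The natural plan combines a linear-algebraic dimension count on the $K_{\bf P}$-real eigenspace with the topological constraints on its nodal set imposed by the slitting property. Let $V$ denote the $m$-dimensional real space of $K_{\bf P}$-real ground states of $H^{AB}(\dot\Omega_{\bf P})$. For any $u\in V\setminus\{0\}$ the nodal set $\mathcal N(u)$ is, by the preceding regularity results together with the slitting property, a tree in $\overline\Omega$ (in the simply connected case) containing all $\ell$ poles; at each pole it has an odd number $\nu(p_j)\geq 1$ of branches, at other interior singular points an even number $\nu\geq 2$, and $\rho\geq 1$ branches at boundary singular points.

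The first step is a dimension count: for any $m-1$ prescribed points $q_1,\dots,q_{m-1}\in\dot\Omega_{\bf P}$, the evaluation map $V\to\mathbb R^{m-1}$ has nontrivial kernel, producing a nonzero ground state $u\in V$ whose nodal tree $\mathcal N(u)$ passes through each $q_i$. Each such $q_i$ is either a regular point of $\mathcal N(u)$ (degree $2$, on an edge) or a non-pole interior singular vertex of even degree $\geq 2$, so it contributes no new odd-degree vertex. Since a tree has an even number of odd-degree vertices and each of the $\ell$ poles supplies one such vertex, the number of odd-$\rho$ boundary endpoints must satisfy the parity congruence $\equiv\ell\pmod 2$.

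To convert this into a bound on $m$, I would use the leading Frobenius expansion near each pole: for a ground state, $u(r,\theta)\sim r^{1/2}\bigl(a_j\cos(\theta/2)+b_j\sin(\theta/2)\bigr)$ with $(a_j,b_j)\in\mathbb R^2$, higher-order singular behavior $r^{k+1/2}$ ($k\geq 1$) being ruled out variationally (such a profile costs too much energy to lie in the first eigenspace). The map $V\to\mathbb R^{2\ell}$ extracting these leading coefficients is injective by unique continuation, and the slitting-plus-parity constraint confines its image to a subspace of dimension at most the announced bound: $\ell$ for odd $\ell\geq 3$ (parity forces at least one boundary endpoint) and $\ell-1$ for even $\ell\geq 4$ (parity allows no boundary endpoint, which removes one degree of freedom compared with the odd case). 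The small cases $\ell=1,2$ must be handled directly: the tree is a single arc from $p_1$ to $\partial\Omega$, or an arc-system involving $p_1,p_2$, each a $2$-parameter family, giving $m\leq 2$.

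The main obstacle is the final step: translating the tree-parity condition into a precise codimension inside $\mathbb R^{2\ell}$. One has to match the continuous parameters of admissible trees (positions of boundary endpoints, branching directions at each pole, shapes of interior arcs) to the asymptotic coefficients $(a_j,b_j)$ of the eigenfunction, and control how forced zeros at $m-1$ points constrain the tree topology without producing cycles (which would violate slitting) or extra odd-degree vertices (which would violate parity). This combinatorial-to-analytic bridge is the technical heart of the proof, and it is precisely where the odd/even parity of $\ell$ produces the announced distinction between $\ell$ and $\ell-1$.
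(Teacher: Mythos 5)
The survey states this proposition without proof, quoting it from \cite{MR1690957}, so your attempt has to be measured against the argument given there. Your general frame --- leading coefficients $(a_j,b_j)$ of the $r^{1/2}$-behaviour at the poles, injectivity of the resulting coefficient map on the eigenspace $V$, and topological constraints coming from the slitting property --- is the right starting point, but there are two genuine gaps. First, the claim that higher-order vanishing $r^{k+1/2}$, $k\geq 1$, at a pole is ``ruled out variationally'' is false: a ground state can vanish to order $3/2$ at a pole. For $\ell=3$, one of the admissible configurations (see Figure~\ref{fig.AB}) consists of three nodal arcs emanating from one pole and ending at the two other poles and at one point of $\partial\Omega$; its complement in $\overline\Omega$ is connected and it is compatible with Euler's formula \eqref{chapBH.Emu} for a $1$-partition, so $\nu({\bf p}_1)=3$ and $(a_1,b_1)=0$ there. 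Consequently the injectivity of $V\to\mathbb R^{2\ell}$ does not follow ``by unique continuation''; it requires the connectivity/Euler argument showing that a nonzero ground state cannot have vanishing leading coefficient at \emph{every} pole.

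Second, and decisively, the step where ``slitting plus parity confines the image to a subspace of dimension $\ell$ (resp.\ $\ell-1$)'' is exactly the content of the proposition, and you acknowledge you cannot carry it out. The constraint is not linear: it is a condition imposed on every nonzero element of the image. Its natural quantitative form --- Euler's formula bounds the number $z(u)$ of poles where the leading coefficient of $u$ vanishes by $(\ell-1)/2$ for $\ell$ odd and by $\ell/2$ for $\ell$ even, while $m>2k$ forces some $u\neq 0$ with $z(u)\geq k$ --- only yields $m\leq \ell+1$ for $\ell$ odd and $m\leq\ell+2$ for $\ell$ even, short of the stated bounds. The proof in \cite{MR1690957} closes this gap by a finer argument in the spirit of \cite{MR1736932}, analyzing the zeros and local winding indices of combinations $u+iv$ of two eigenfunctions; the half-integer index contributed by each pole, combined with the parity of $\ell$, is what produces the dichotomy $\ell$ versus $\ell-1$. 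A minor additional point: the nodal set of a ground state is in general a forest rather than a tree (for $\ell=4$ it can be two disjoint arcs pairing the poles), so the handshake/parity count must be run componentwise, each component meeting $\partial\Omega$ at most once.
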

 These two propositions are also true in the case of the Dirichlet realization of  a Schr\"odinger operator  in the form $H^{AB}(\dot \Omega_{{\bf P}}) + V$.  They are  actually also proved \cite{HHO2O}  for the Neumann problem and in the case of more general holes.\\

Coming back to more general eigenfunctions,  we have:
\begin{proposition}
The zero set of a $K_{{\bf P}}$-real eigenfunction of $H^{AB}(\dot \Omega_{{\bf P}})$ is the boundary set of a regular partition if and only if $\nu({\bf p}_j) \geq 2$ for $j=1,\dots, \ell$.
\end{proposition}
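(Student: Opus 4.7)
The strategy is to compare the local structure of the nodal set $\mathcal N$ of a $K_{\bf P}$-real eigenfunction of $H^{AB}(\dot\Omega_{\bf P})$, as summarized in the preceding paragraphs, with the defining conditions (i)--(iii) of a regular partition, and to identify $\nu({\bf p}_j)=1$ as the only possible obstruction.

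Away from the poles, $\mathcal N$ enjoys the same local regularity as a Dirichlet nodal set: a finite union of smooth arcs, with isolated interior crossings where an even number $\nu({\bf x}_i)\geq 2$ of half-lines meet with equal angles, and a finite set of boundary crossings satisfying the analogous equal-angle condition. At each pole ${\bf p}_j$, the half-integer flux forces an odd number $\nu({\bf p}_j)\geq 1$ of smooth half-curves to emerge from ${\bf p}_j$ with equal angles. This local picture is the only input needed.

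For the implication ``$\Leftarrow$'', assume $\nu({\bf p}_j)\geq 2$ for every $j$ and take $\mathcal D$ to be the family of connected components of $\Omega\setminus\mathcal N$. Property (i) is satisfied by taking $X(\partial\mathcal D)$ to be the union of the ordinary interior crossings (with $\nu$ even and $\geq 2$) and the poles (with $\nu$ odd and $\geq 3$); property (ii) is inherited from the boundary behaviour of $\mathcal N$; and property (iii) is precisely the equal-angle information recalled above. Hence $\partial\mathcal D=\mathcal N$ and $\mathcal D$ is a regular partition.

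For ``$\Rightarrow$'', suppose $\mathcal N=\partial\mathcal D$ for some regular partition $\mathcal D$ and fix a pole ${\bf p}_j$. Near ${\bf p}_j$ the set $\mathcal N$ is locally a bouquet of $\nu({\bf p}_j)$ half-lines meeting at ${\bf p}_j$; in particular it is not locally diffeomorphic to a regular two-sided curve at ${\bf p}_j$. By condition (i), the pole must therefore lie in the exceptional finite set $X(\partial\mathcal D)$, and the definition of $X(\partial\mathcal D)$ imposes $\nu({\bf p}_j)\geq 2$. The main subtle point, which I would treat most carefully, is to rule out the possibility that a single half-line ending at an interior point could be absorbed as a non-singular point of $\partial\mathcal D$: this is excluded because a half-line is not locally diffeomorphic to a regular embedded curve, so condition (i) would fail at ${\bf p}_j$ in that case. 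Combining the two implications yields the stated equivalence.
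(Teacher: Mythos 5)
Your argument is correct and coincides with the paper's own reasoning: the paper states this proposition without a separate proof precisely because, as it remarks just beforehand, the only possible discrepancy between the nodal set of a $K_{\bf P}$-real eigenfunction and the boundary set of a regular partition is the occurrence of a pole with $\nu({\bf p}_j)=1$, which is exactly the obstruction you isolate in both directions. Your extra care in ruling out a single half-line being absorbed as a non-singular point of $\partial\mathcal D$ is a sensible way to make the omitted step explicit.
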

Let us illustrate the case of the square with one singular point. Figure~\ref{chapBH.fig.exvecpABcarre} gives the nodal lines of some eigenfunctions of the Aharonov-Bohm operator: there are always one or three lines ending at the singular point (represented by a red point). Note that only the fourth picture gives a regular and nice partition.

\begin{figure}[h!t]
\begin{center}\begin{tabular}{cccccccc}
\includegraphics[height=1.5cm]{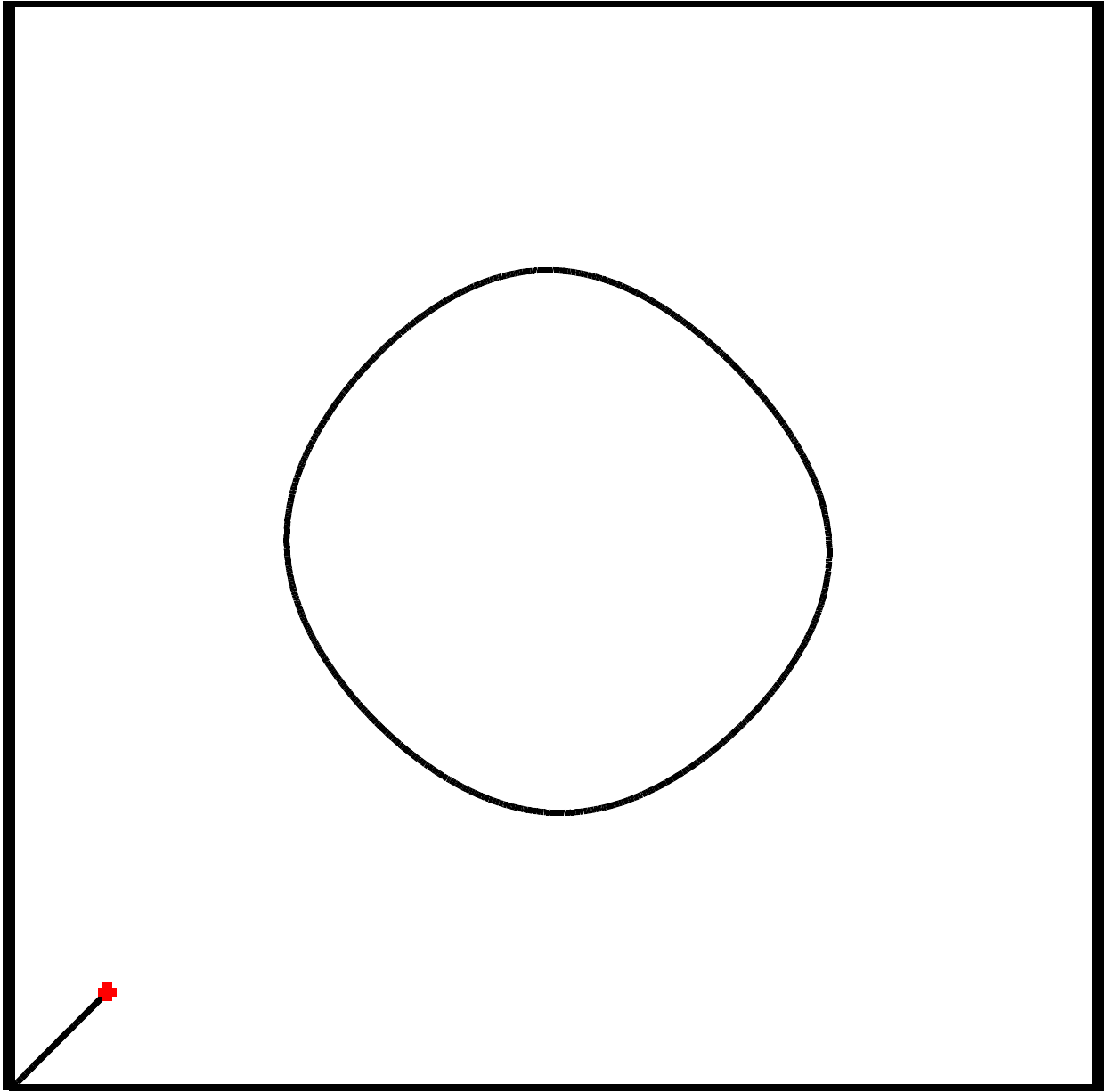}
&\includegraphics[height=1.5cm]{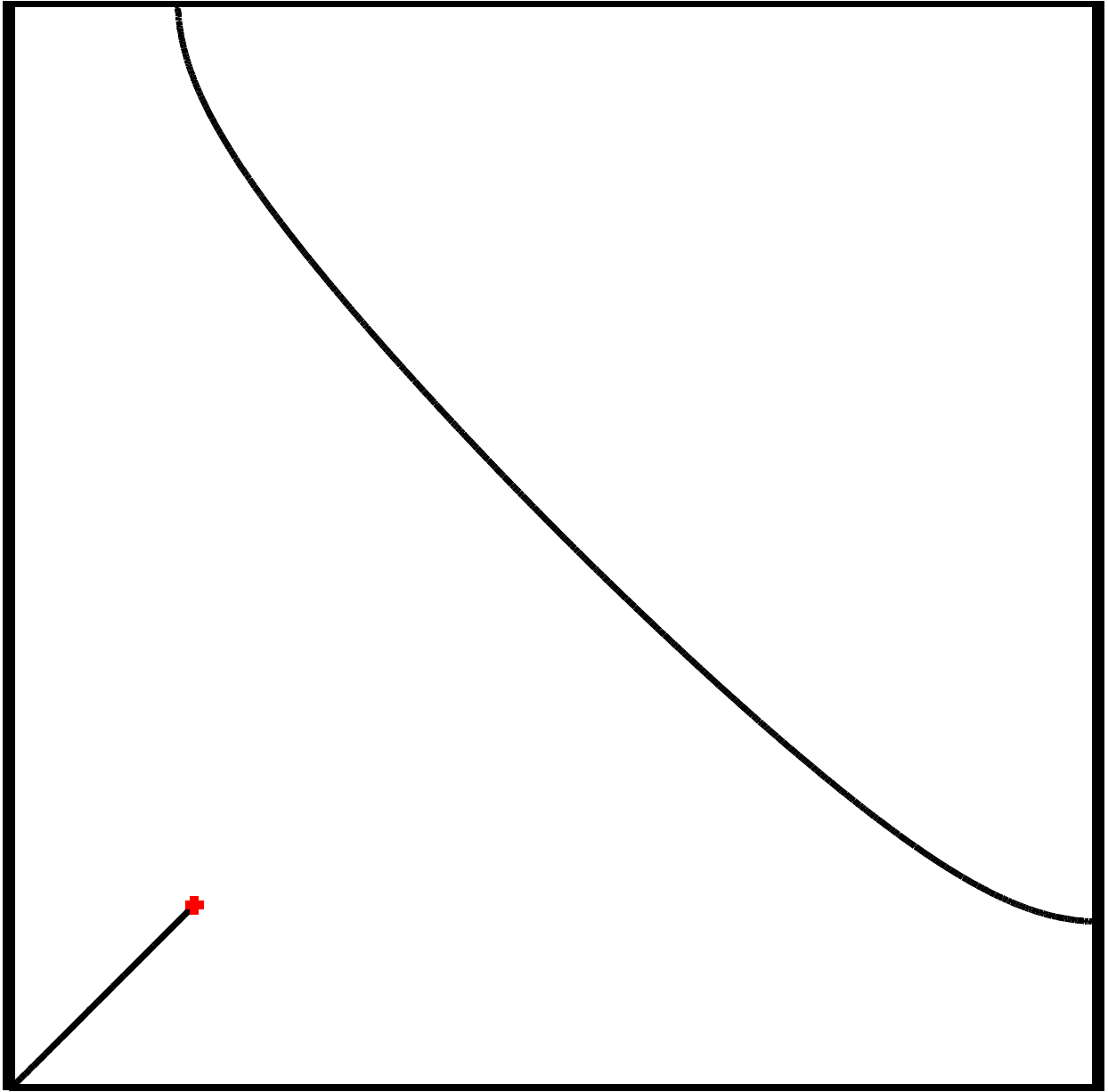}
&\includegraphics[height=1.5cm]{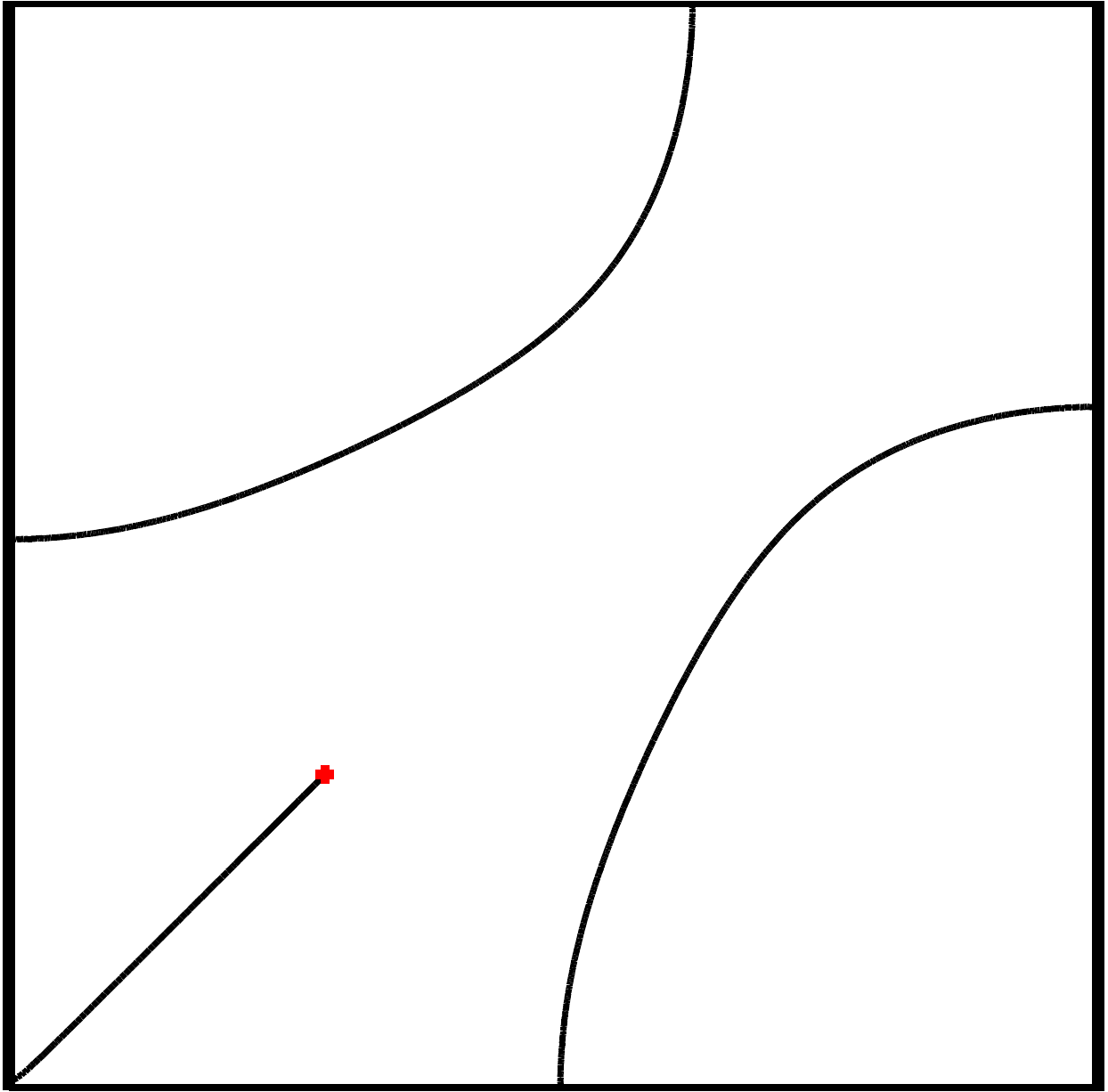}
&\includegraphics[height=1.5cm]{carre_t25.pdf} 
&\includegraphics[height=1.5cm]{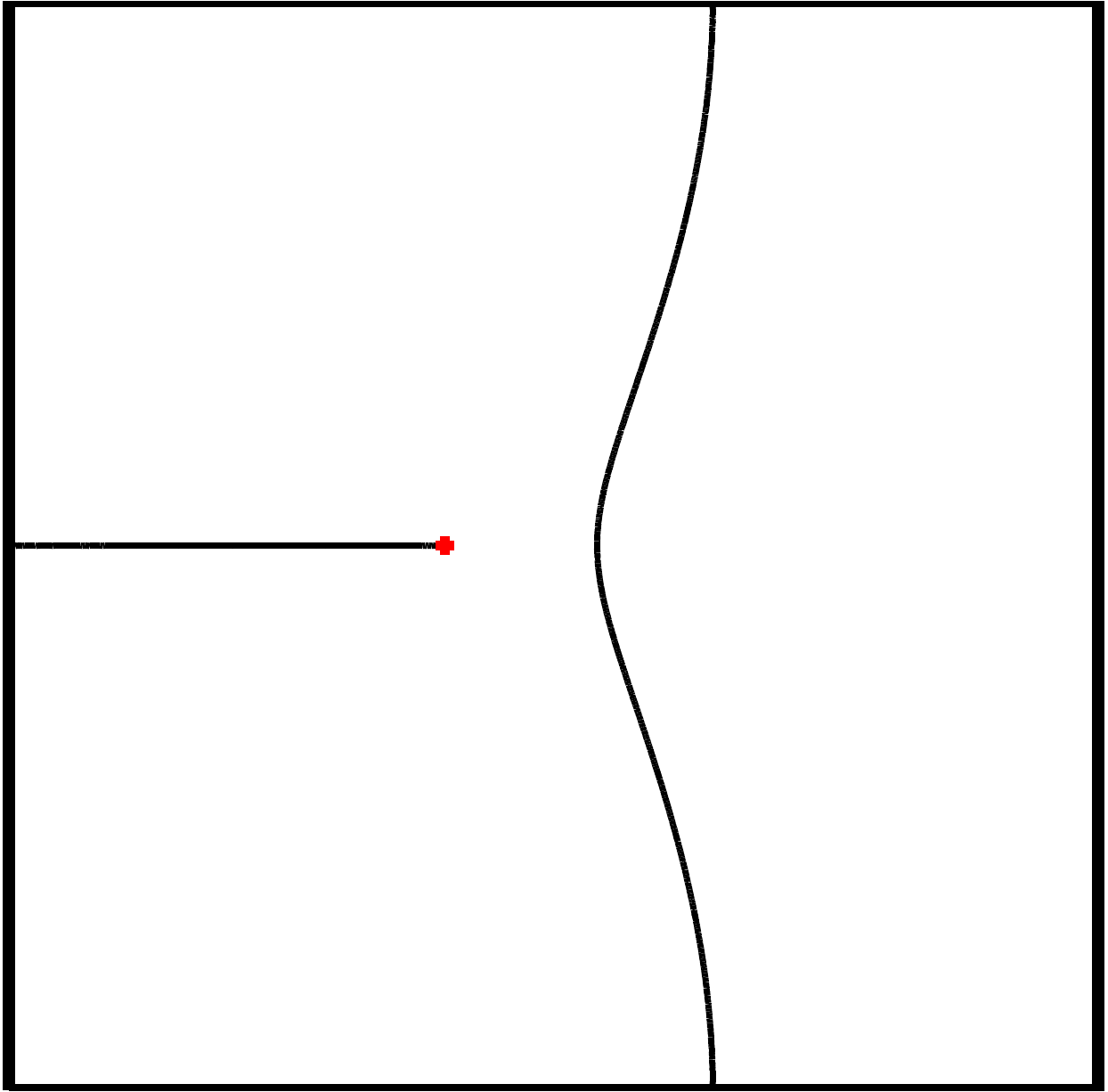}
&\includegraphics[height=1.5cm]{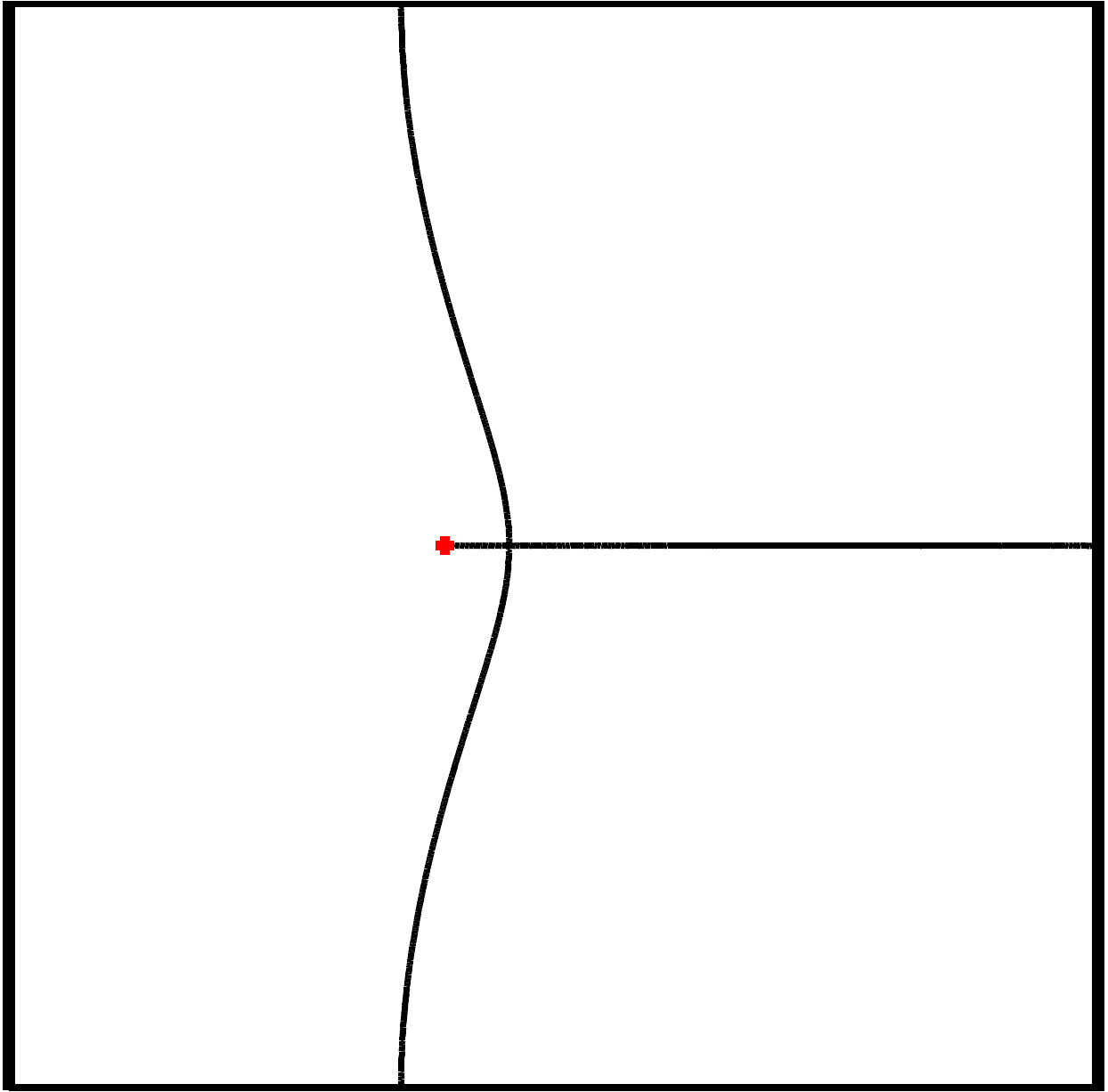}
&\includegraphics[height=1.5cm]{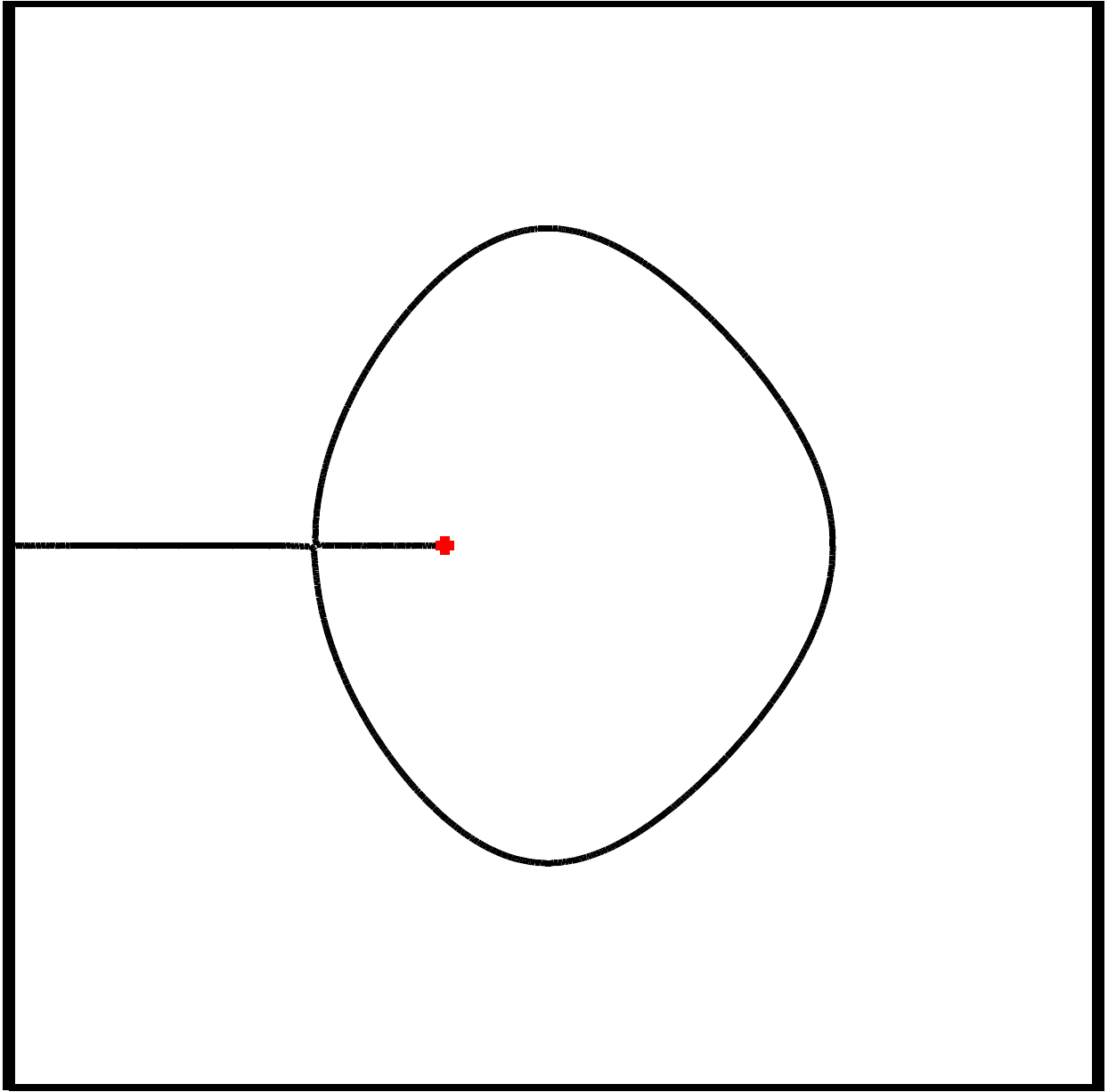}
&\includegraphics[height=1.5cm]{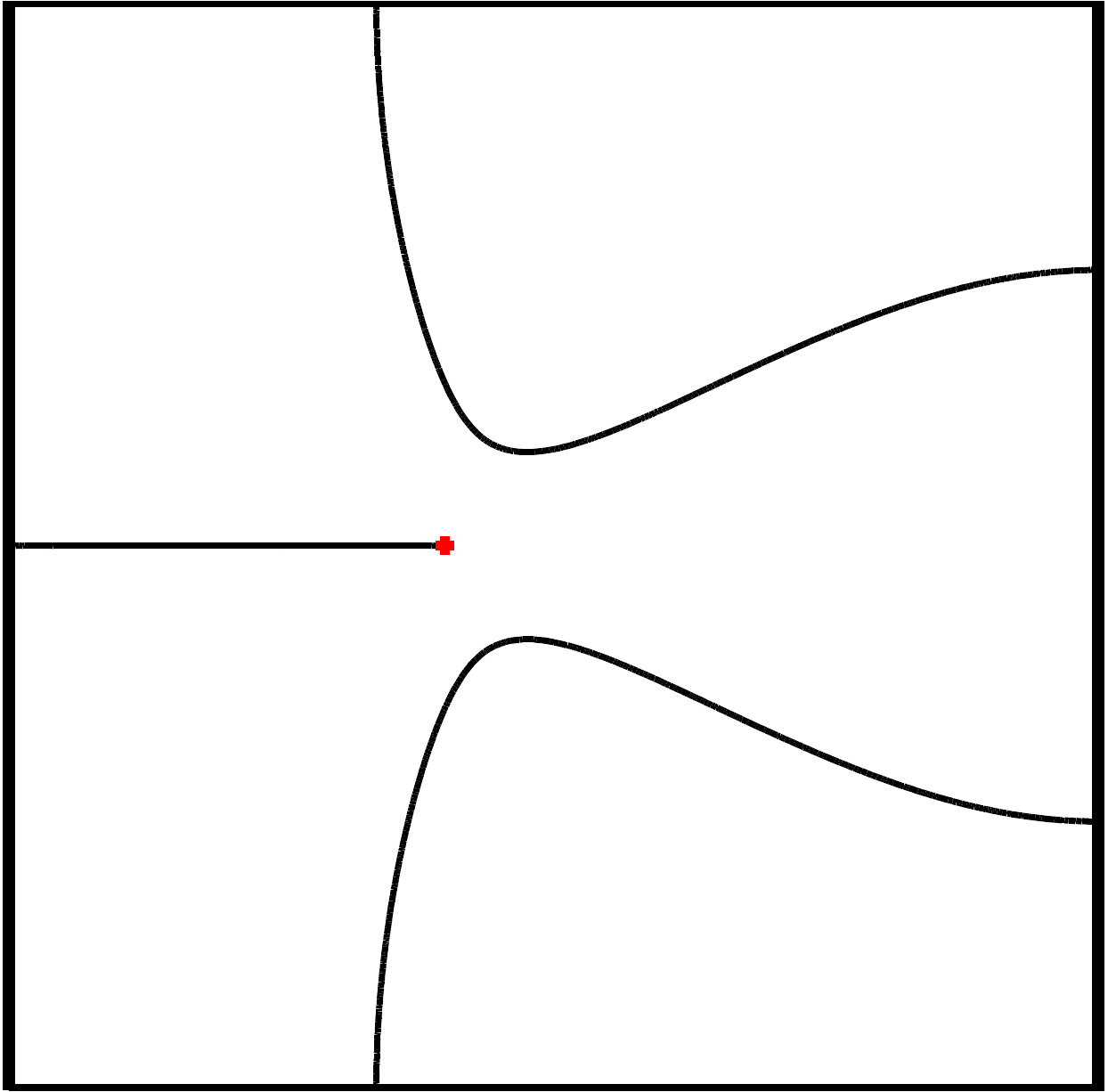}
\end{tabular}
\caption{Nodal lines of some Aharonov-Bohm eigenfunctions on the square.\label{chapBH.fig.exvecpABcarre}}
\end{center}
\end{figure}
\begin{figure}[h!bt]
\begin{center}
\begin{tabular}{ccccc}
\includegraphics[height=1.5cm]{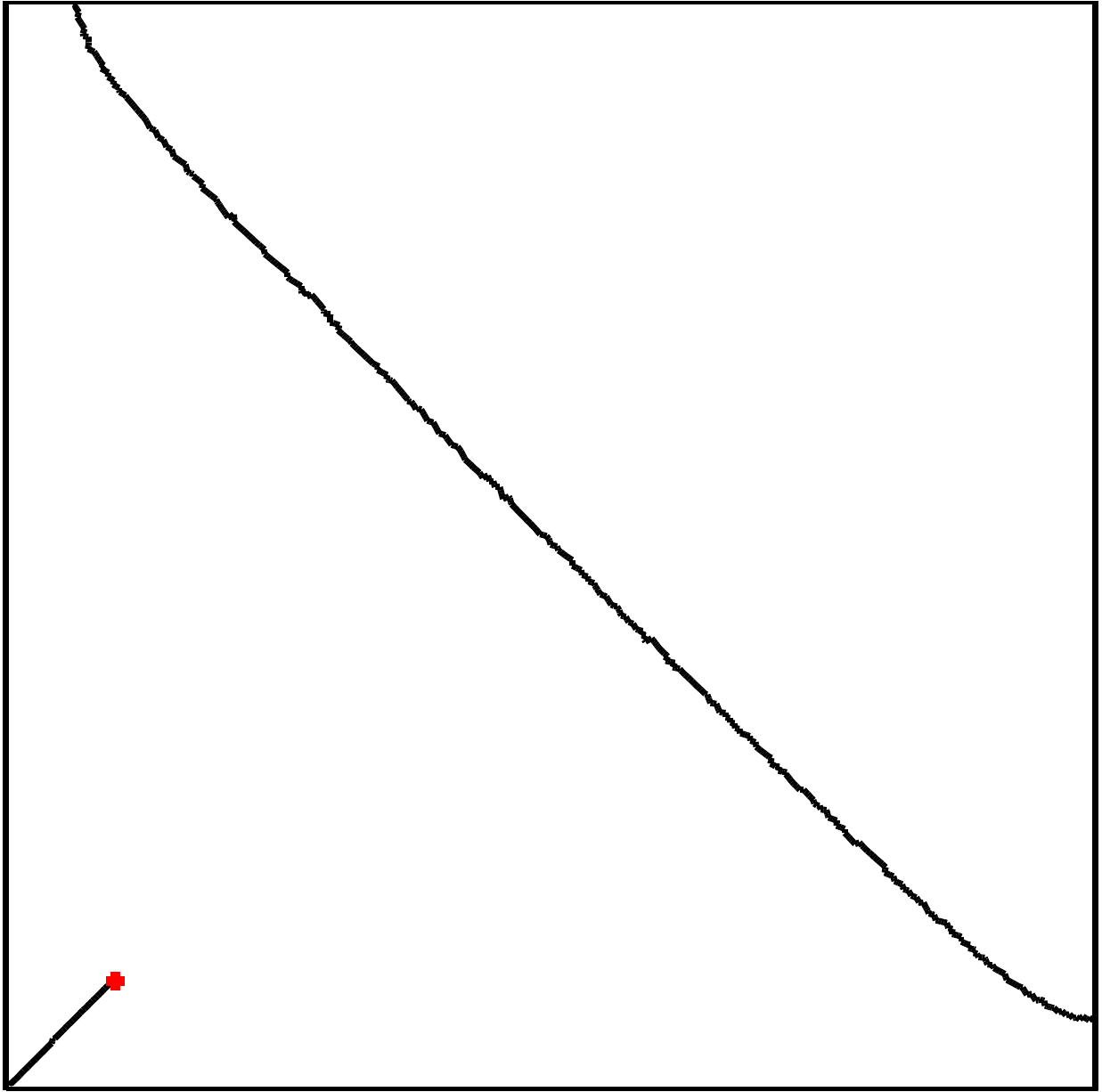}&
\includegraphics[height=1.5cm]{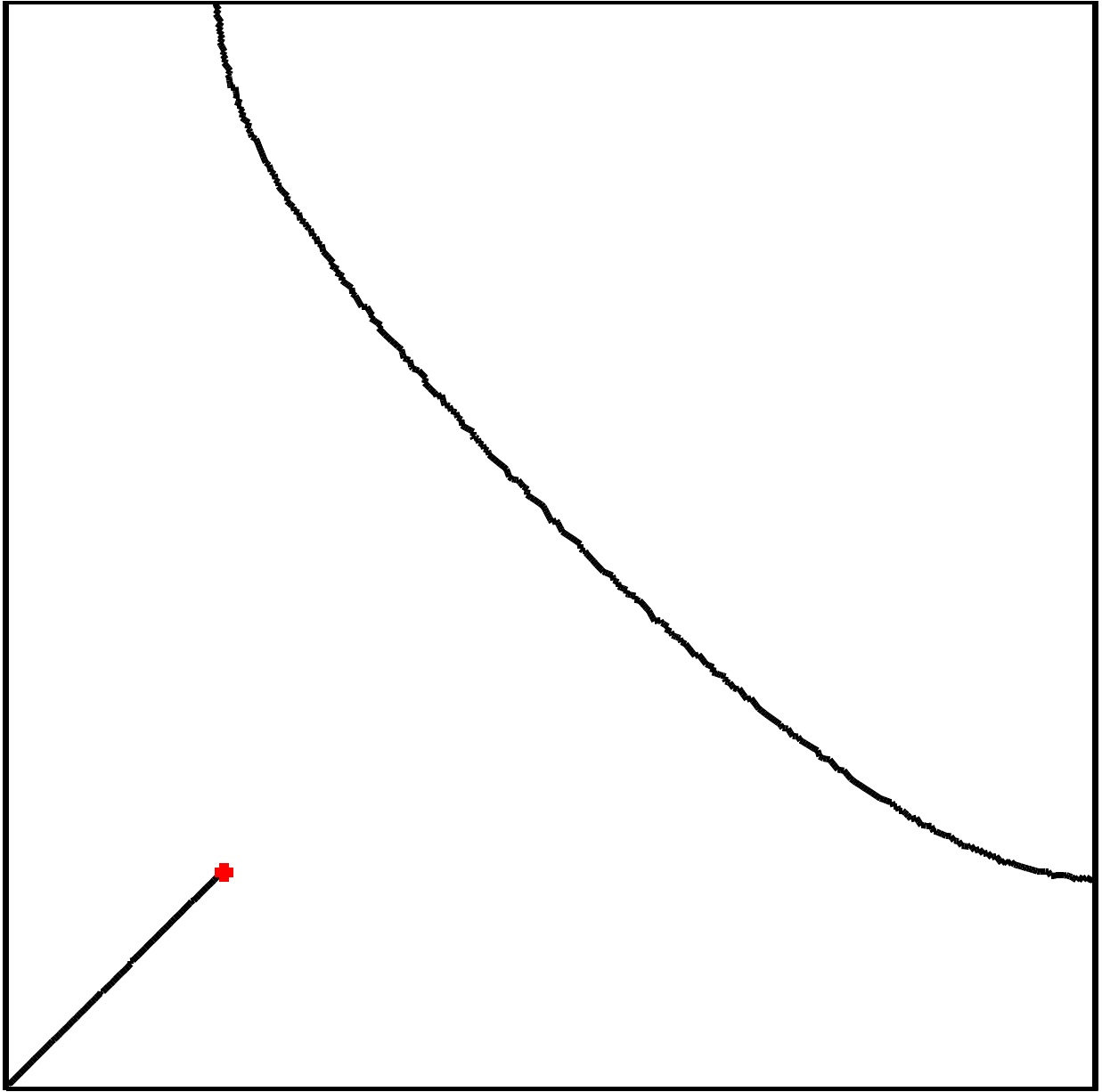}&
\includegraphics[height=1.5cm]{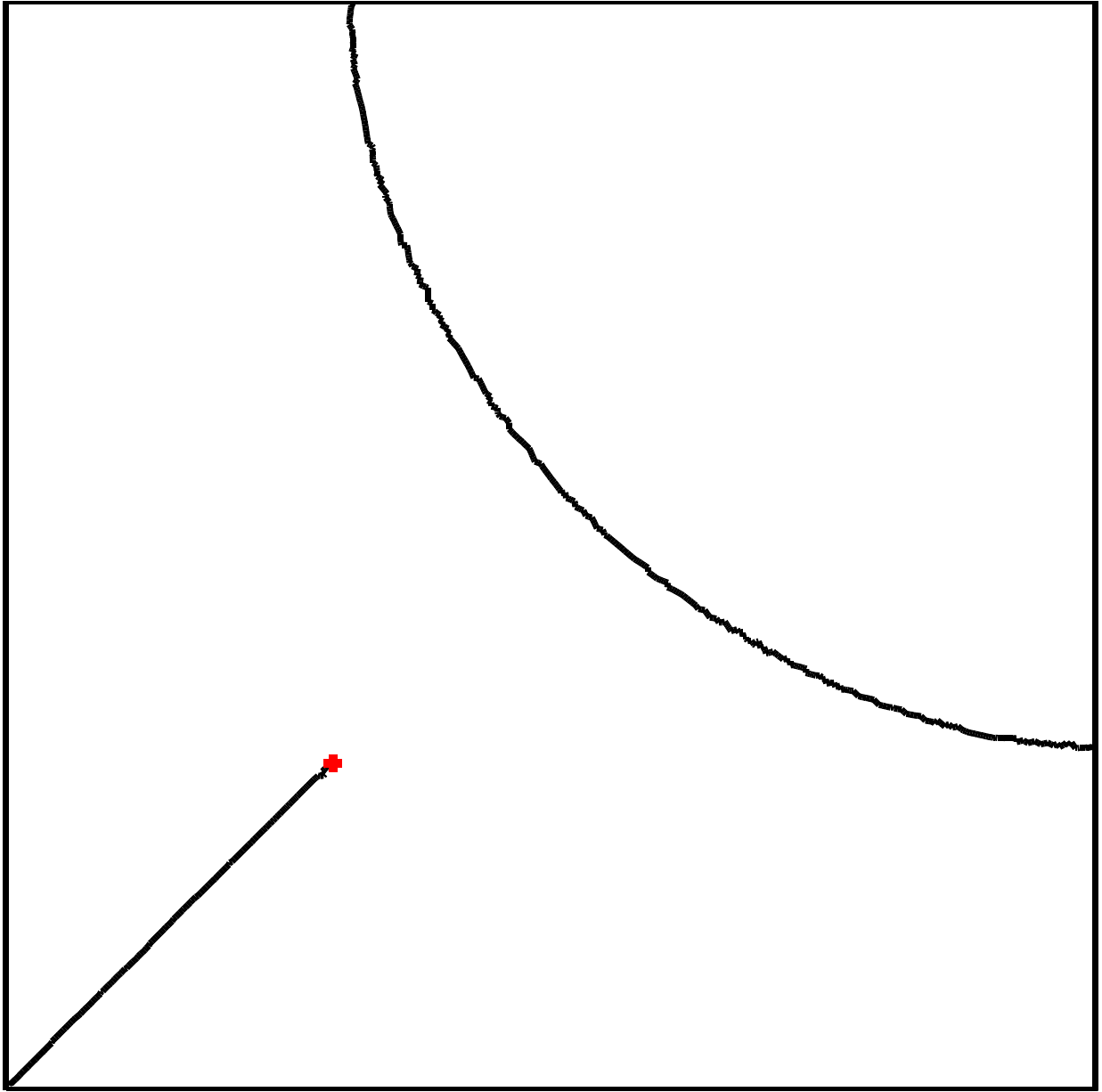}&
\includegraphics[height=1.5cm]{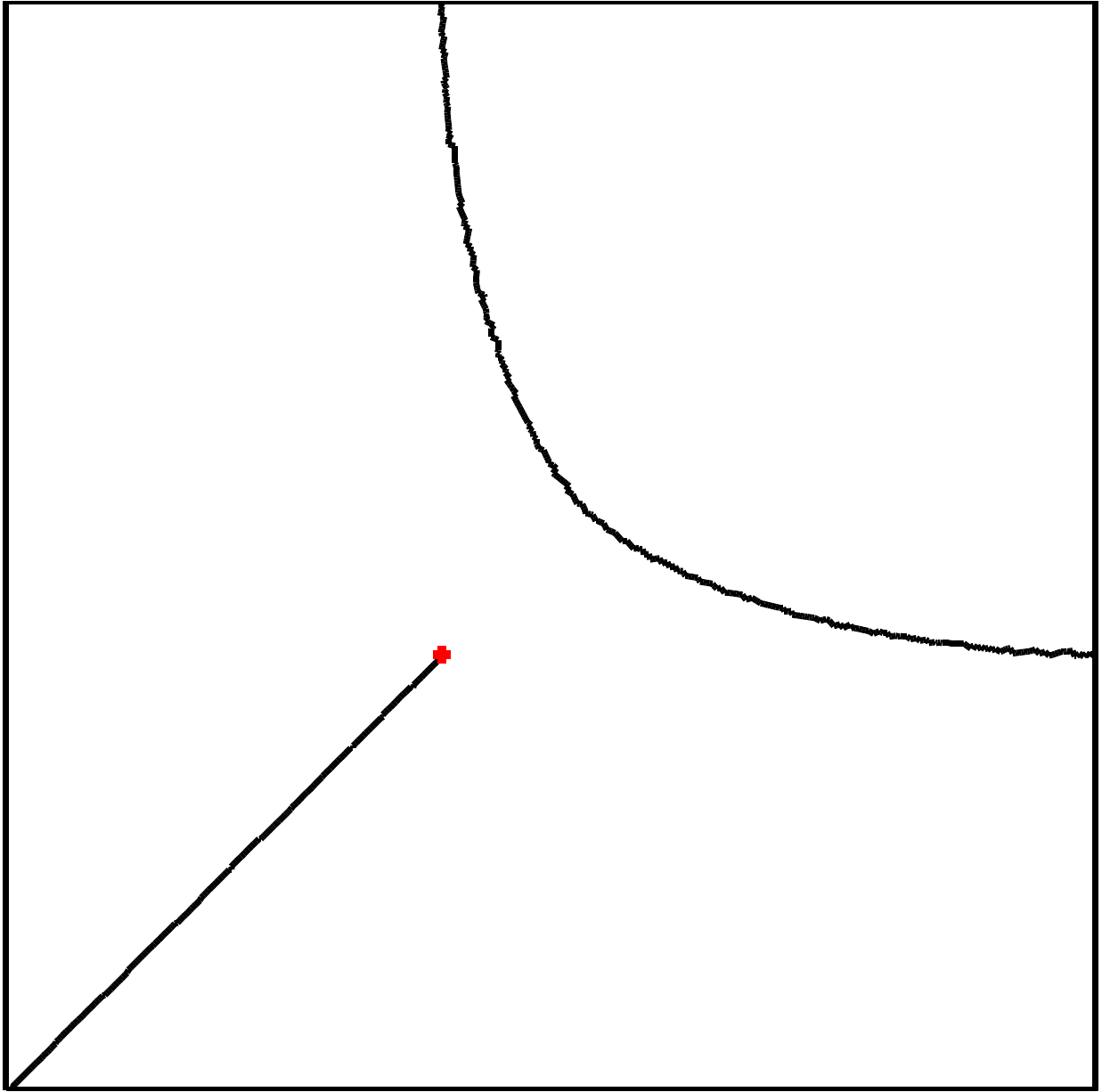}&
\includegraphics[height=1.5cm]{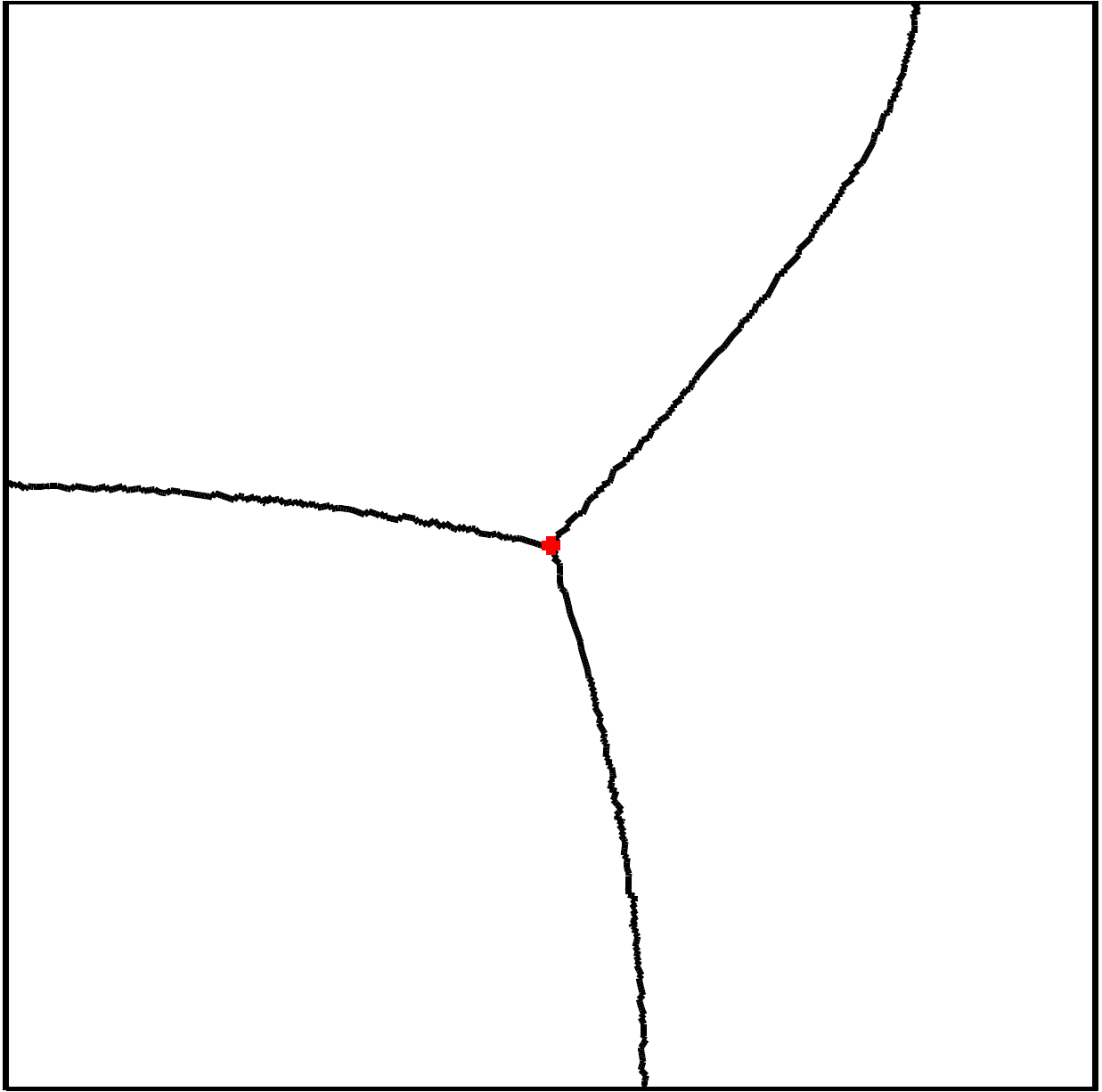}
\end{tabular}
\caption{Nodal lines for the third Aharonov-Bohm eigenfunction in function of ${\bf p}$ on the diagonal.\label{chapBH.fig.VP3AB} }
\end{center}
\end{figure}
Our guess for the punctured square (${\bf p}$ at the center) is that any nodal partition of a third $K_{{\bf p}}$-real eigenfunction gives a minimal $ 3$-partition.
Numerics shows that this is only true if the square is punctured at the center (see Figure~\ref{chapBH.fig.VP3AB} and \cite{MR2836255} for a systematic study). Moreover the third eigenvalue is maximal there and has multiplicity two (see Figure~\ref{chapBH.fig.ABsquare}).
\subsection{Minimal partitions and Aharonov-Bohm operators}
 Helffer--Hoffmann-Ostenhof prove a magnetic characterization of minimal $k$-partitions (see~\cite[Theorem 5.1]{MR3120736}):
\begin{theorem}\label{chapBH.thchar}
Let $\Omega$ be simply connected and $\mathcal D$ be a minimal $k$-partition of $\Omega$. Then $\mathcal D$ is the nodal partition\index{nodal domains} of some $k$-th $ K_{{\bf {\bf P}}}$-real eigenfunction of $H^{AB} ( \dot{\Omega}_{\bf P})$ with $\{{\bf p}_1,\ldots, {\bf p}_\ell\}= X^{\sf odd}(\partial \mathcal D )\,$.
\end{theorem}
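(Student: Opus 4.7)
The plan is to extend the bipartite-implies-nodal construction of Theorem~\ref{chapBH.partnod} to the non-bipartite case, using the half-integer Aharonov--Bohm fluxes at the odd-valence interior singular points as the precise device that restores bipartiteness. The heuristic is that a minimal partition fails to be nodal exactly when its adjacency graph has an odd cycle, and an odd cycle must enclose an odd number of points of $X^{\mathsf{odd}}(\partial\mathcal D)$; inserting flux $\tfrac12$ at each such point kills this obstruction.

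First I would replace $\mathcal D$ by a strong, regular representative (Theorem~\ref{chapBH.thstrreg}), set $\mathbf P=X^{\mathsf{odd}}(\partial\mathcal D)=\{\mathbf p_1,\dots,\mathbf p_\ell\}$ and $\boldsymbol\alpha=(\tfrac12,\dots,\tfrac12)$, and introduce the antilinear involution $K_{\mathbf P}$ built from $\Theta_{\mathbf P}$ in \eqref{chapBH.defTheta}. Then I would pass to the double cover $\pi\colon\hat\Omega\to\Omega$ branched exactly over $\mathbf P$, with deck involution $\sigma$. A parity count (consistent with the Euler relation of Proposition~\ref{chapBH.Euler}) shows that the lifted partition $\hat{\mathcal D}$ is bipartite: any loop in $\Omega\setminus\partial\mathcal D$ whose successive neighbor-crossings give an odd cycle in the adjacency graph must encircle an odd number of points of $\mathbf P$, hence lifts to a path swapping sheets in $\hat\Omega$, so no odd cycle survives upstairs.

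Next I would apply the construction of Theorem~\ref{chapBH.partnod} on $\hat\Omega$. The distinguished positive ground states $u_i$ provided by the pair compatibility condition (recalled right after Theorem~\ref{chapBH.thstrreg}) satisfy that $u_i-u_j$ is the second Dirichlet eigenfunction on $D_{ij}$ for every neighbor pair. Lifting the $u_i$ to both sheets and assigning signs via a $2$-coloring of $\hat{\mathcal D}$, one obtains $\hat\varphi\in H_0^1(\hat\Omega)$ with $\hat\varphi\circ\sigma=-\hat\varphi$ and $-\Delta\hat\varphi=\mathfrak L_k(\Omega)\hat\varphi$ weakly on $\hat\Omega$, the eigenvalue equation propagating across the regular part of the lifted nodal set by pair compatibility and across its singular set by capacity. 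The standard correspondence between $\sigma$-antisymmetric functions on $\hat\Omega$ and $K_{\mathbf P}$-real functions on $\dot\Omega_{\mathbf P}$ sends $\hat\varphi$ to a $K_{\mathbf P}$-real eigenfunction $\varphi$ of $H^{AB}(\dot\Omega_{\mathbf P})$ with the same eigenvalue and with nodal set equal to $\partial\mathcal D$.

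The last, and hardest, step is to verify that $\mathfrak L_k(\Omega)$ is the $k$-th eigenvalue of $H^{AB}(\dot\Omega_{\mathbf P})$. The easy direction $\lambda_k^{AB}\le\mathfrak L_k(\Omega)$ comes from the max--min principle applied to the test space $\bigoplus_{i=1}^k H_0^1(D_i)\hookrightarrow H_0^1(\dot\Omega_{\mathbf P})$: each $D_i$ is simply connected and pole-free, so locally $H^{AB}$ is gauge-equivalent to the Dirichlet Laplacian with ground state energy $\mathfrak L_k(\Omega)$. For the reverse inequality, I would assume $\lambda_k^{AB}<\mathfrak L_k(\Omega)$ and derive a contradiction by producing a $k$-partition of $\Omega$ with energy strictly less than $\mathfrak L_k(\Omega)$: this requires the Courant-type theorem for Aharonov--Bohm operators together with the regularity of nodal sets of $K_{\mathbf P}$-real eigenfunctions recalled above (at most odd-valence meetings at the ${\bf p}_j$'s, otherwise even), so that the nodal domains of an eigenfunction indexed $\le k$ genuinely give an admissible partition in $\mathfrak O_k(\Omega)$ whose energy controls the corresponding eigenvalue. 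I expect this labelling step to be the main obstacle, because it must simultaneously rule out that $\varphi$ sits at an index below $k$ (using strict monotonicity $\mathfrak L_k<\mathfrak L_{k+1}$) and that it sits at an index above $k$ (using the magnetic Courant bound), thereby pinning down the correct rank.
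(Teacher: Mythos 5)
Your construction is essentially the paper's: the paper builds the eigenfunction directly as $\sum_j \varepsilon_j \exp\big(\tfrac i2 \Theta_{{\bf P}}\big)\varphi_j$ with poles at $X^{\sf odd}(\partial \mathcal D)$, which is precisely the descent of your $\sigma$-antisymmetric lift from the branched double cover (a point of view the paper itself calls ``essentially equivalent'' to the Aharonov--Bohm formulation in the remark closing Section~4). Your extra step checking that $\mathfrak L_k(\Omega)$ occupies rank $k$ in the spectrum of $H^{AB}(\dot\Omega_{\bf P})$ is part of the full argument of \cite{MR3120736} that the paper's one-line sketch omits, and the ingredients you name (max--min over $\bigoplus_i H_0^1(D_i)$ for $\lambda_k^{AB}\le \mathfrak L_k(\Omega)$, magnetic Courant plus regularity of $K_{{\bf P}}$-real nodal sets for the converse) are the right ones.
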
{
\begin{proof}
We return to the proof that a bipartite minimal partition is nodal for the Laplacian. Using the $\varphi_j$ whose existence was recalled for minimal partitions, we can find a sequence $\varepsilon_j =\pm 1$ such that $\sum_j \varepsilon_j \exp ( \frac i 2 \Theta_{{\bf P}} ({\bf x}) )\, \varphi_j({\bf x})$ is an eigenfunction of $H^{AB} ( \dot{\Omega}_{\bf P})$, where $\Theta_{{\bf P}}$ was defined in \eqref{chapBH.defTheta}.
\end{proof}

\subsection{Continuity with respect to the poles}
In the case of a unique singular point, \cite{MR2815036}, \cite[Theorem 1.1]{MR3270167} establish the continuity with respect to the singular point up to the boundary.
\begin{theorem}\label{chapBH.thm.BNNT}
Let $\Omega$ be simply connected,  $\alpha\in [0,1)$, $\lambda_{k}^{AB}({\bf p},\alpha)$ be the $k$-th eigenvalue of $H^{AB}(\dot\Omega_{{\bf p}},\alpha)$. Then the function ${\bf p}\in\Omega\mapsto \lambda_{k}^{AB}({\bf p},\alpha)$ admits a continuous extension on $\overline\Omega$ and
\begin{equation}\label{chapBH.contpole}
\lim_{{\bf p}\to\partial\Omega}\lambda_{k}^{AB}({\bf p},\alpha) = \lambda_{k}(\Omega)\,,\qquad\forall k\geq1\,.
\end{equation}
\end{theorem}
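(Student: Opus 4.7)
\emph{Plan.} The proof splits into continuity of ${\bf p}\mapsto\lambda_k^{AB}({\bf p},\alpha)$ on the open set $\Omega$, and its extension to $\overline{\Omega}$ identifying the boundary value with $\lambda_k(\Omega)$. Two tools pervade: a gauge transformation to trade the AB operator for the ordinary Dirichlet Laplacian (possibly on a cut domain), and the uniform magnetic Hardy inequality (Laptev--Weidl)
\[\int\frac{|u|^2}{|{\bf x}-{\bf p}|^2}\,d{\bf x}\le\frac{1}{\min(\alpha,1-\alpha)^2}\int\left|(\nabla-i\alpha{\bf A}^{{\bf p}})u\right|^2\,d{\bf x},\]
valid for $u\in C_0^\infty(\dot\Omega_{{\bf p}})$ and $\alpha\in(0,1)$, with constant independent of ${\bf p}$.

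\textbf{Interior continuity.} For ${\bf p}_0\in\Omega$ and ${\bf p}_n\to{\bf p}_0$ in $\Omega$, the difference ${\bf A}^{{\bf p}_n}-{\bf A}^{{\bf p}_0}$ is closed with zero total flux around every loop in $\Omega$; on $\Omega$ minus a thin neighborhood $U_n$ of a short arc joining ${\bf p}_n$ and ${\bf p}_0$ it admits a single-valued primitive $\psi_n$, and multiplication by $e^{i\alpha\psi_n}$ unitarily intertwines $H^{AB}(\dot\Omega_{{\bf p}_n},\alpha)$ and $H^{AB}(\dot\Omega_{{\bf p}_0},\alpha)$ there. Cutoffs adapted to $U_n$, their commutators tamed by the magnetic Hardy inequality above, give norm-resolvent convergence and hence continuity of each $\lambda_k^{AB}(\cdot,\alpha)$.

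\textbf{Boundary limit: upper bound.} Given ${\bf p}_n\to{\bf p}_0\in\partial\Omega$, set $\lambda_n^j:=\lambda_j^{AB}({\bf p}_n,\alpha)$ and pick $L^2$-orthonormal Dirichlet eigenfunctions $\varphi_1,\dots,\varphi_k$ of $H(\Omega)$. On the simply connected cut domain $\Omega\setminus\gamma_n$, with $\gamma_n:=[{\bf p}_n,{\bf p}_0]$, one has ${\bf A}^{{\bf p}_n}=\nabla\phi_n$ for a smooth $\phi_n$. Let $\chi_n$ be a cutoff vanishing on an $\epsilon_n$-neighborhood of $\gamma_n$ ($\epsilon_n\to 0$) and set $u_n^j:=\chi_n\,e^{i\alpha\phi_n}\varphi_j\in H_0^1(\dot\Omega_{{\bf p}_n})$. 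A direct computation yields $(\nabla-i\alpha{\bf A}^{{\bf p}_n})u_n^j=e^{i\alpha\phi_n}\nabla(\chi_n\varphi_j)$, so the AB Dirichlet energy of $u_n^j$ equals the ordinary Dirichlet energy of $\chi_n\varphi_j$. Since $\varphi_j$ vanishes on $\partial\Omega$ and $\gamma_n$ lies near it, $|\varphi_j|=O(\epsilon_n+|{\bf p}_n-{\bf p}_0|)$ on the strip around $\gamma_n$, and the commutator $\int|\nabla\chi_n|^2|\varphi_j|^2$ tends to $0$ for $\epsilon_n$ chosen so that $(\epsilon_n+|{\bf p}_n-{\bf p}_0|)^3/\epsilon_n\to 0$ (e.g.\ $\epsilon_n=|{\bf p}_n-{\bf p}_0|$). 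Thus $u_n^j\to\varphi_j$ in $H_0^1(\Omega)$, and the min-max principle gives $\limsup_n\lambda_n^k\le\lambda_k(\Omega)$.

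\textbf{Boundary limit: lower bound and main obstacle.} Let $u_n^1,\dots,u_n^k$ be $L^2$-orthonormal AB eigenfunctions at ${\bf p}_n$. The magnetic Hardy inequality combined with the pointwise bound $|\nabla u|^2\le2|(\nabla-i\alpha{\bf A}^{{\bf p}_n})u|^2+2\alpha^2|{\bf A}^{{\bf p}_n}|^2|u|^2$ yields a uniform $H^1(\Omega)$-bound on $\{u_n^j\}_n$; extracting weak-$H^1$ and strong-$L^2$ limits by Rellich, one gets $u_n^j\rightharpoonup w^j\in H_0^1(\Omega)$, with $L^2$-orthonormality transferring to the limit so that $w^1,\dots,w^k$ are linearly independent. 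Since ${\bf A}^{{\bf p}_n}\to{\bf A}^{{\bf p}_0}$ in $C^\infty_{\rm loc}(\overline{\Omega}\setminus\{{\bf p}_0\})$, the equations pass to $-\Delta_{\alpha{\bf A}^{{\bf p}_0}}w^j=\lambda^j w^j$ in $\Omega$ weakly, with $\lambda^j:=\lim_n\lambda_n^j$. As $\Omega$ is simply connected and ${\bf p}_0\notin\Omega$, ${\bf A}^{{\bf p}_0}$ is exact on $\Omega$; a gauge transformation (in the spirit of Proposition~\ref{Prop2.6}) then converts the $w^j$ into $k$ orthonormal Dirichlet eigenfunctions of $H(\Omega)$ with eigenvalues $\lambda^j$, and the variational principle forces $\lambda^k\ge\lambda_k(\Omega)$, which combined with the upper bound proves \eqref{chapBH.contpole}. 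The delicate point is this uniform $H^1$-bound as the pole approaches $\partial\Omega$: it hinges on the magnetic Hardy constant being independent of ${\bf p}_n$, and on controlling the weak limit despite the singular potential collapsing onto a boundary point; a cut-gauge alternative, working with multi-valued gauges on $\Omega\setminus\gamma_n$, reduces the difficulty to showing that the jump condition across the shrinking cut $\gamma_n$ trivializes in the limit.
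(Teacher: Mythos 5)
The survey itself contains no proof of Theorem~\ref{chapBH.thm.BNNT}: it is quoted from Noris--Terracini \cite{MR2815036} and Bonnaillie-No\"el--Noris--Nys--Terracini \cite[Theorem 1.1]{MR3270167}, the text only adding the remark that for ${\bf p}\notin\Omega$ the operator is gauge-equivalent to $H(\Omega)$. So your proposal must be measured against those references, and it follows essentially the route they (and L\'ena \cite{MR3390812} for several poles) take: the pole-independent magnetic Hardy (Laptev--Weidl) inequality as the uniform a priori tool, cut-and-gauge trial functions built from Dirichlet eigenfunctions for the upper bound, and a compactness/weak-limit argument for the lower bound. Both halves of your boundary-limit argument are sound. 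For the upper bound, the only nontrivial error term is $\int|\nabla\chi_n|^2\varphi_j^2$, which is indeed $O\big((\epsilon_n+|{\bf p}_n-{\bf p}_0|)^3/\epsilon_n\big)$ with the choice $\epsilon_n=|{\bf p}_n-{\bf p}_0|$ (and can be controlled even at corner or crack points of $\partial\Omega$ via the classical Hardy inequality for $H_0^1(\Omega)$, so the piecewise-$C^{1,+}$ setting is not an obstruction); the Gram and energy matrices of the $u_n^j$ then converge to the identity and to $\mathrm{diag}(\lambda_1,\dots,\lambda_k)$, and min-max applies. For the lower bound, the uniform $H^1_0(\Omega)$-bound from Hardy plus Rellich plus locally uniform convergence of ${\bf A}^{{\bf p}_n}$ on compact subsets of $\Omega$ does produce $k$ $L^2$-orthonormal limit functions solving the limiting equation; since ${\bf p}_0\in\partial\Omega$, the potential ${\bf A}^{{\bf p}_0}$ is smooth and exact on the simply connected $\Omega$, and the gauge of Proposition~\ref{Prop2.6} converts them into genuine Dirichlet eigenfunctions, closing the argument.

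Two points deserve attention. First, the endpoint $\alpha=0$ is allowed in the statement but lies outside your Hardy inequality, whose constant $\min(\alpha,1-\alpha)^{-2}$ degenerates; that case must be dispatched separately (trivially, since a point has zero capacity in dimension $2$, so $H^{AB}(\dot\Omega_{\bf p},0)$ coincides with $H(\Omega)$ and the map is constant). Second, the interior-continuity step is where your write-up is thinnest: ``norm-resolvent convergence'' from cut-off gauges is asserted rather than derived, and this is precisely where the bulk of the work lies in \cite{MR2815036,MR3270167,MR3390812}. The strategy is nonetheless correct: the commutator terms $\int|\nabla\chi_n|^2|u|^2$ concentrated near the coalescing poles are controlled by the Hardy inequality exactly as you indicate (the integrand is dominated by $|u|^2/|{\bf x}-{\bf p}_0|^2$ on a shrinking set), but a complete proof should run the same two-sided semicontinuity scheme as at the boundary --- gauged, cut-off eigenfunctions of the limit operator for one inequality, whose vanishing rate at the pole again comes from Hardy, and the compactness argument for the other --- rather than appeal to resolvent convergence as a black box.
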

The theorem implies that the function ${\bf p}\mapsto \lambda_{k}^{AB}({\bf p},\alpha)$ has an extremal point in $\overline \Omega$. Note also that $\lambda_k^{AB}({\bf p},\alpha)$ is well defined for ${\bf p}\not\in \Omega$ and is equal to $\lambda_k(\Omega)$. One can indeed find a solution $\phi$ in $\Omega$ satisfying $d\phi ={\bf A}_{{\bf p}}\,$, and $u \mapsto \exp(i \alpha \phi) \, u$ defines the unitary transform intertwining $H(\Omega)$ and $H^{AB}(\dot \Omega_{{\bf p}}, \alpha)\,$.
\begin{figure}[h!bt]
\begin{center}
\subfloat[${\bf p}\mapsto \lambda^{AB}_{k}({\bf p})$, ${\bf p}\in\Omega$, $1\leq k\leq 5$\,.\label{fig7a}]{\begin{tabular}{cccccc}
$k=1$ & $k=2$ & $k=3$ & $k=4$ & $k=5$\\
\includegraphics[width=2.8cm]{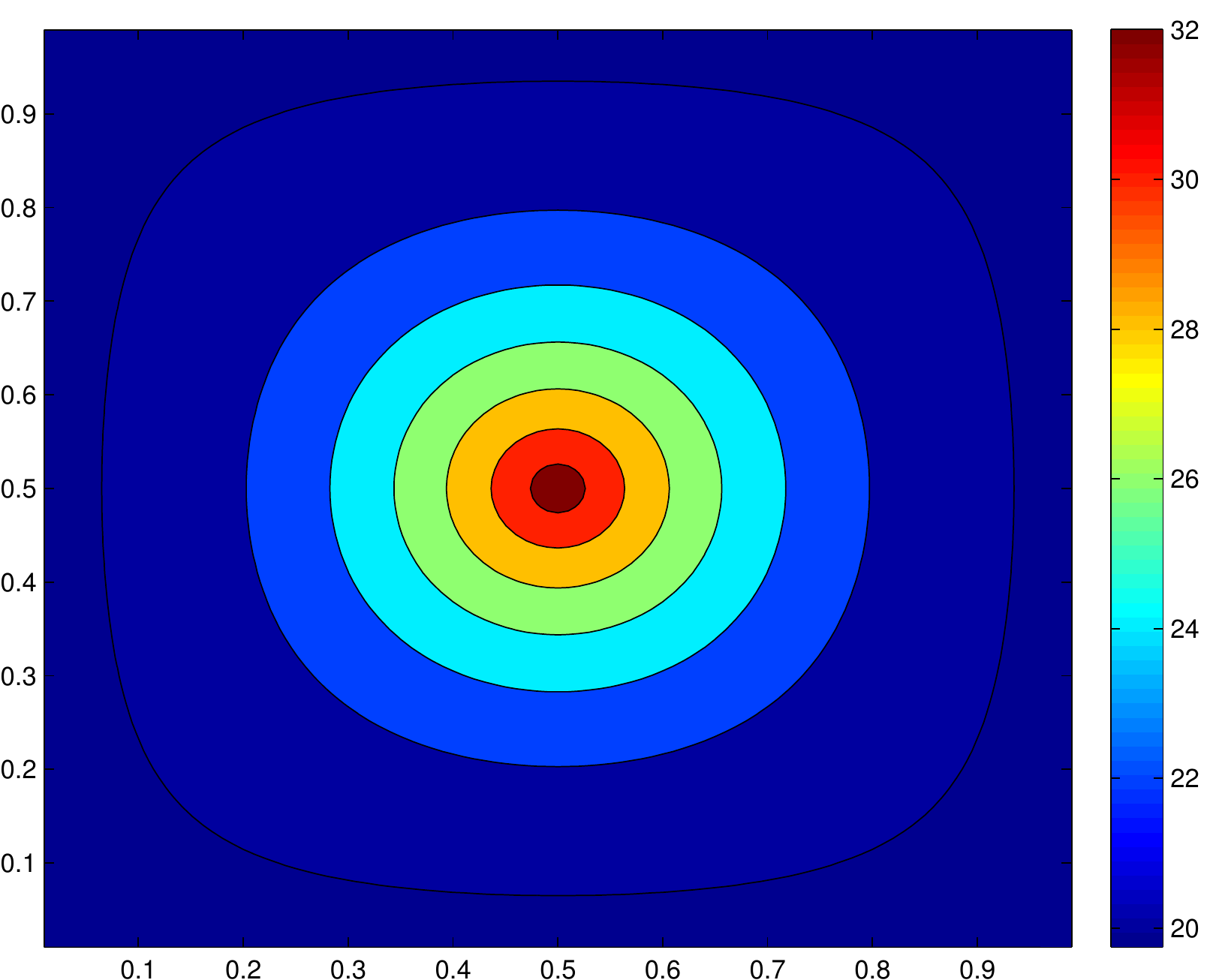}
&\includegraphics[width=2.8cm]{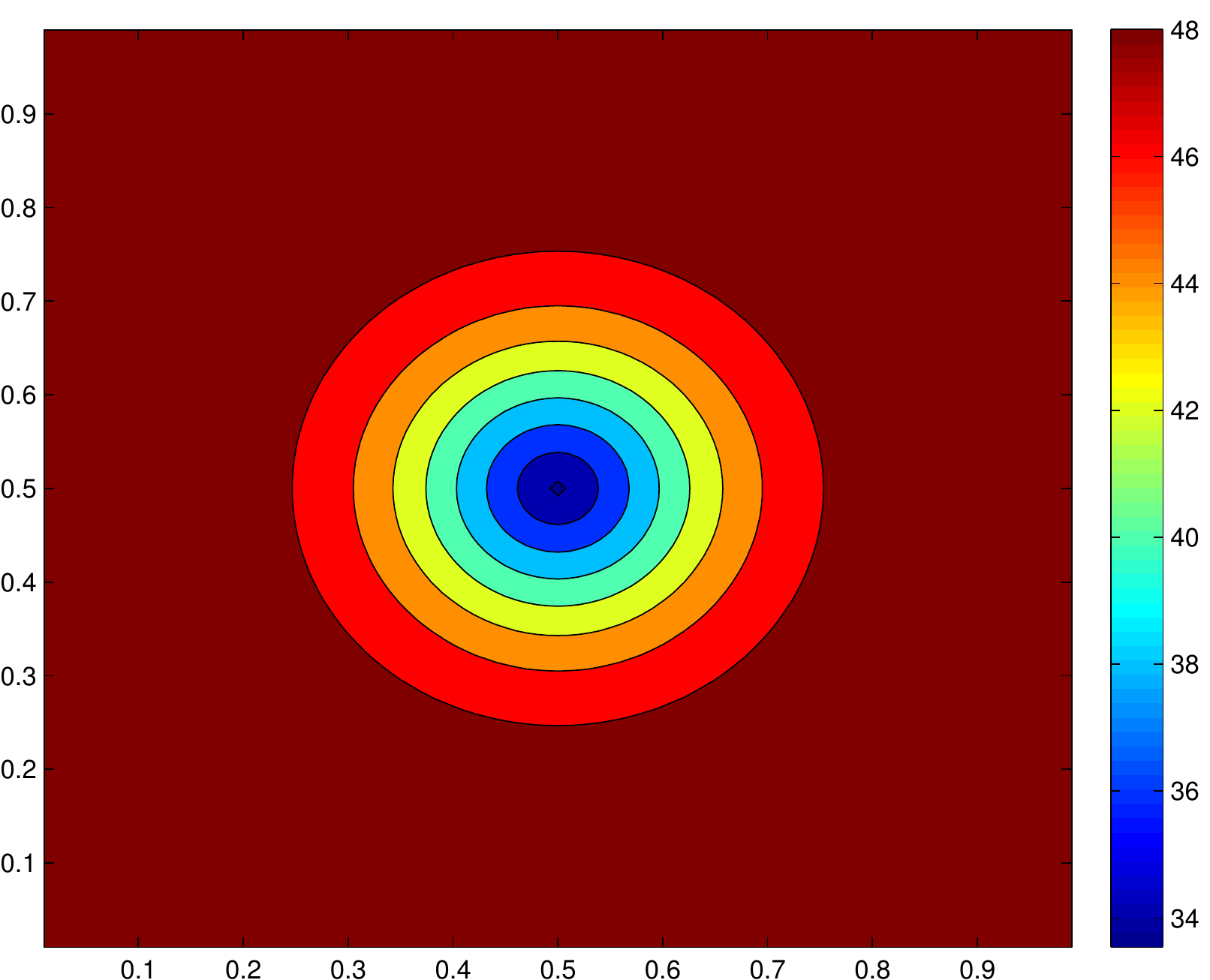}
&\includegraphics[width=2.8cm]{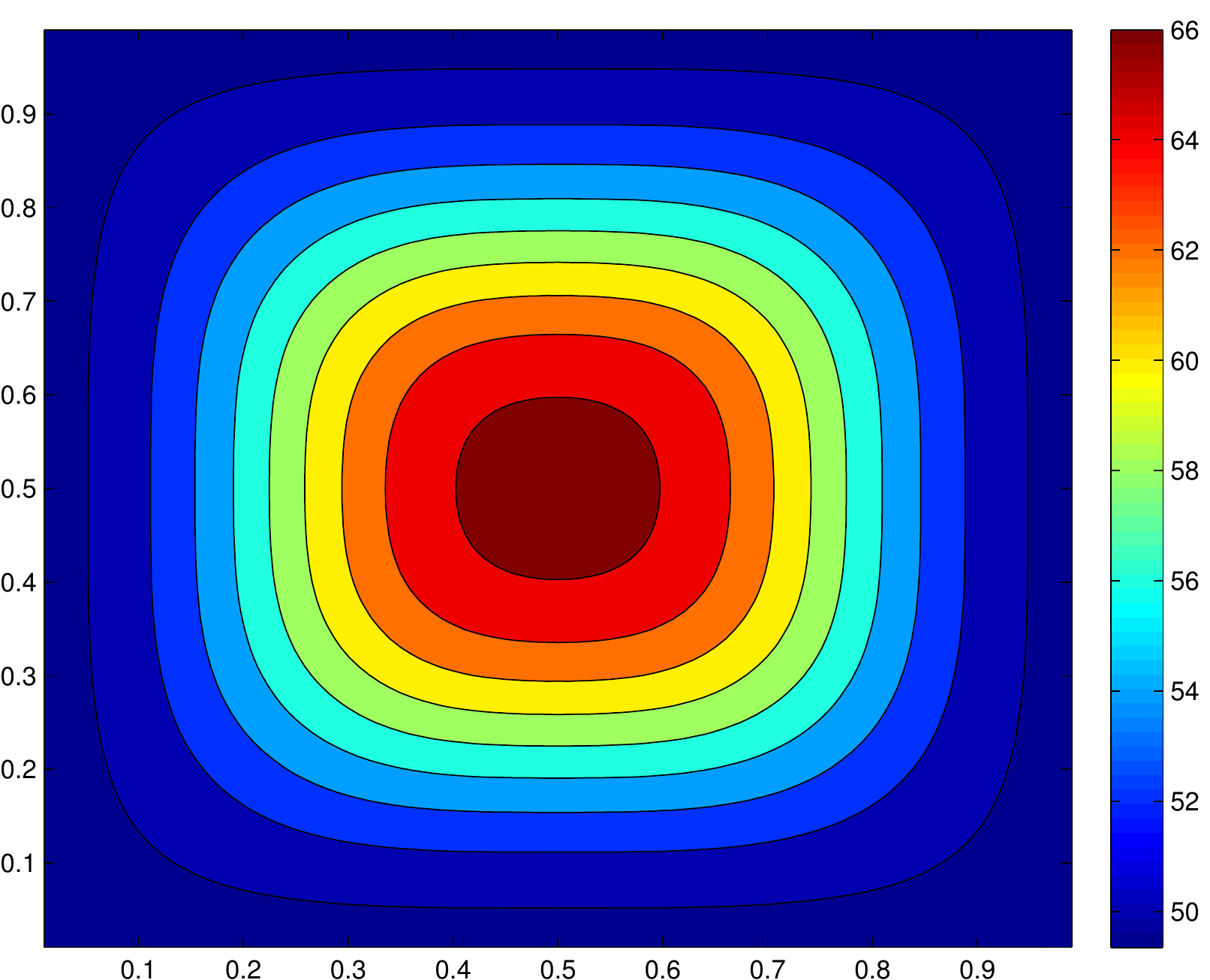}
&\includegraphics[width=2.8cm]{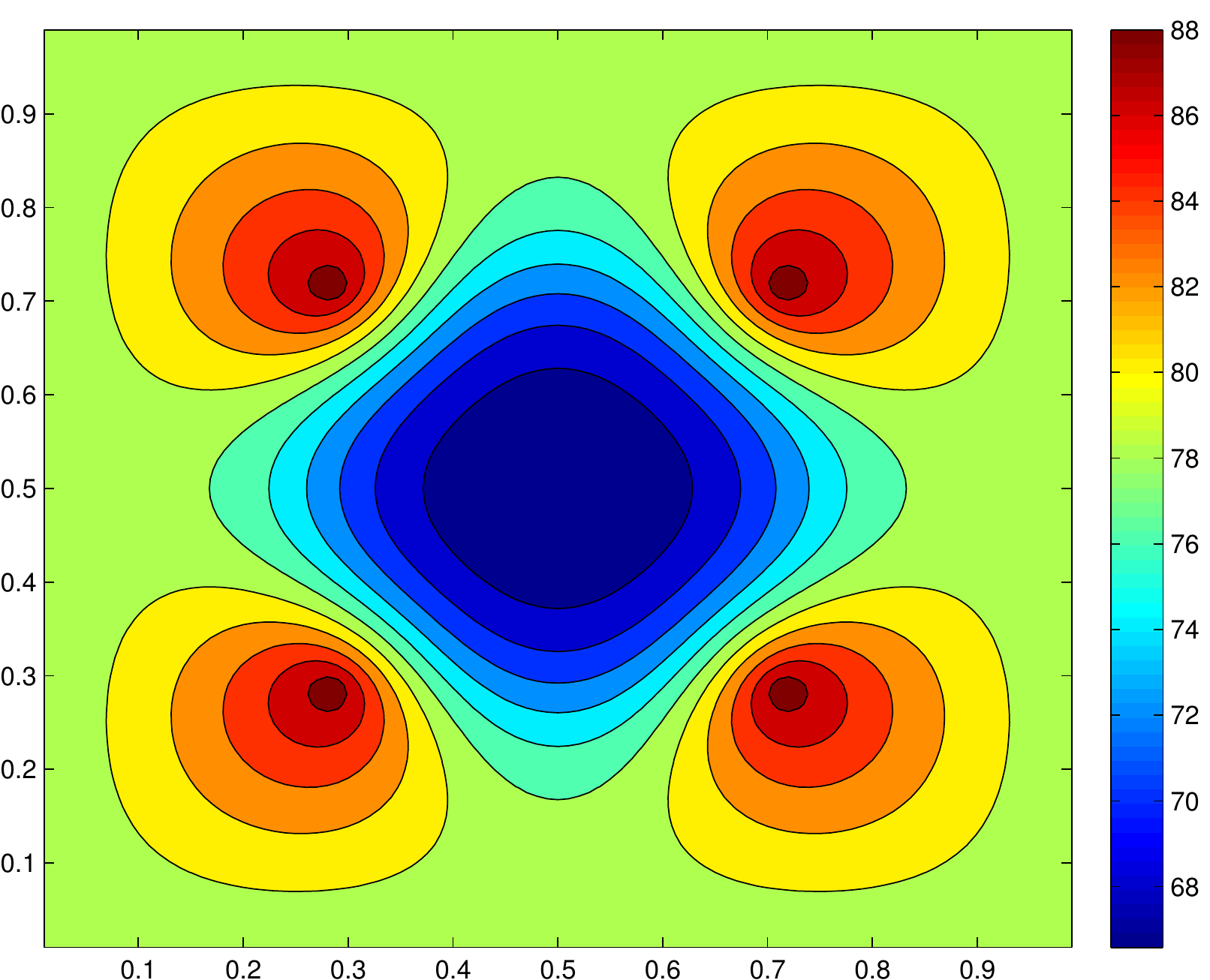}
&\includegraphics[width=2.8cm]{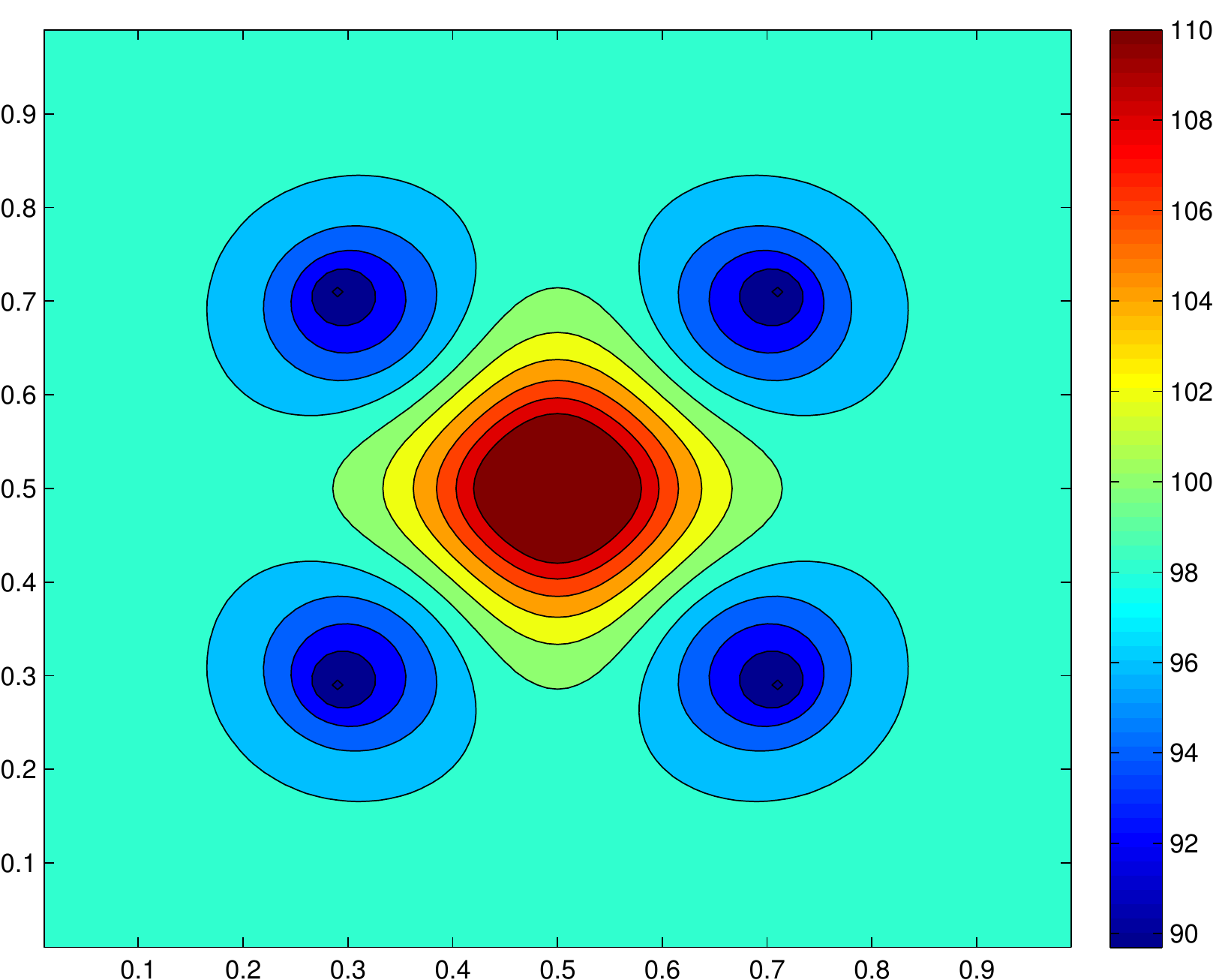}
\end{tabular}}\\
\subfloat[$p\mapsto \lambda^{AB}_{k}({\bf p})$ with ${\bf p}=(p,p)\,$, $1\leq k \leq 9$.\label{chapBH.fig.diagcarre}]{ \quad\quad\includegraphics[height=4cm]{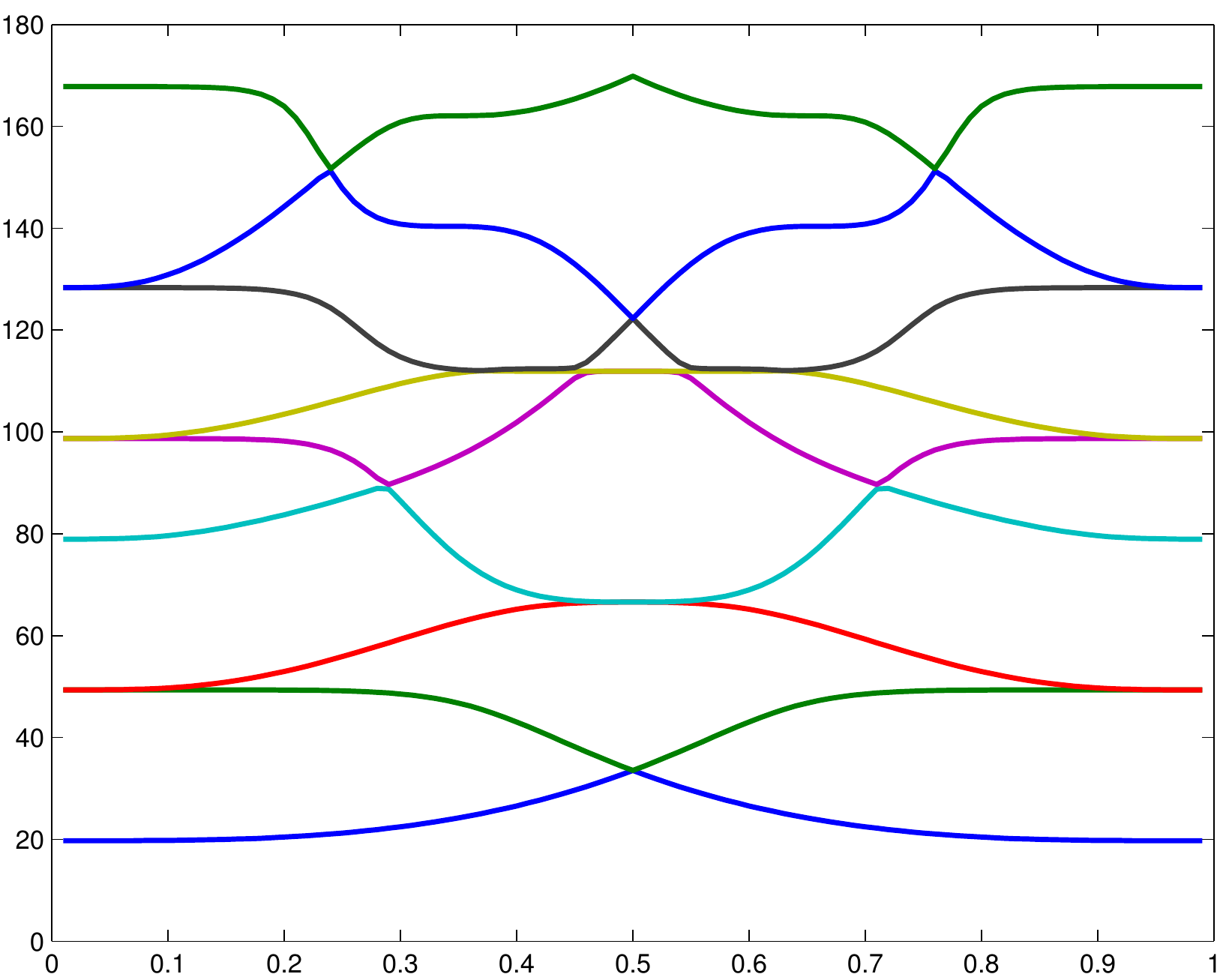}\quad\quad}
\subfloat[$p\mapsto \lambda^{AB}_{k}({\bf p})$ with ${\bf p}=(p,\frac12)\,$, $1\leq k \leq 9$.\label{chapBH.fig.medcarre}]{ \quad\quad\includegraphics[height=4cm]{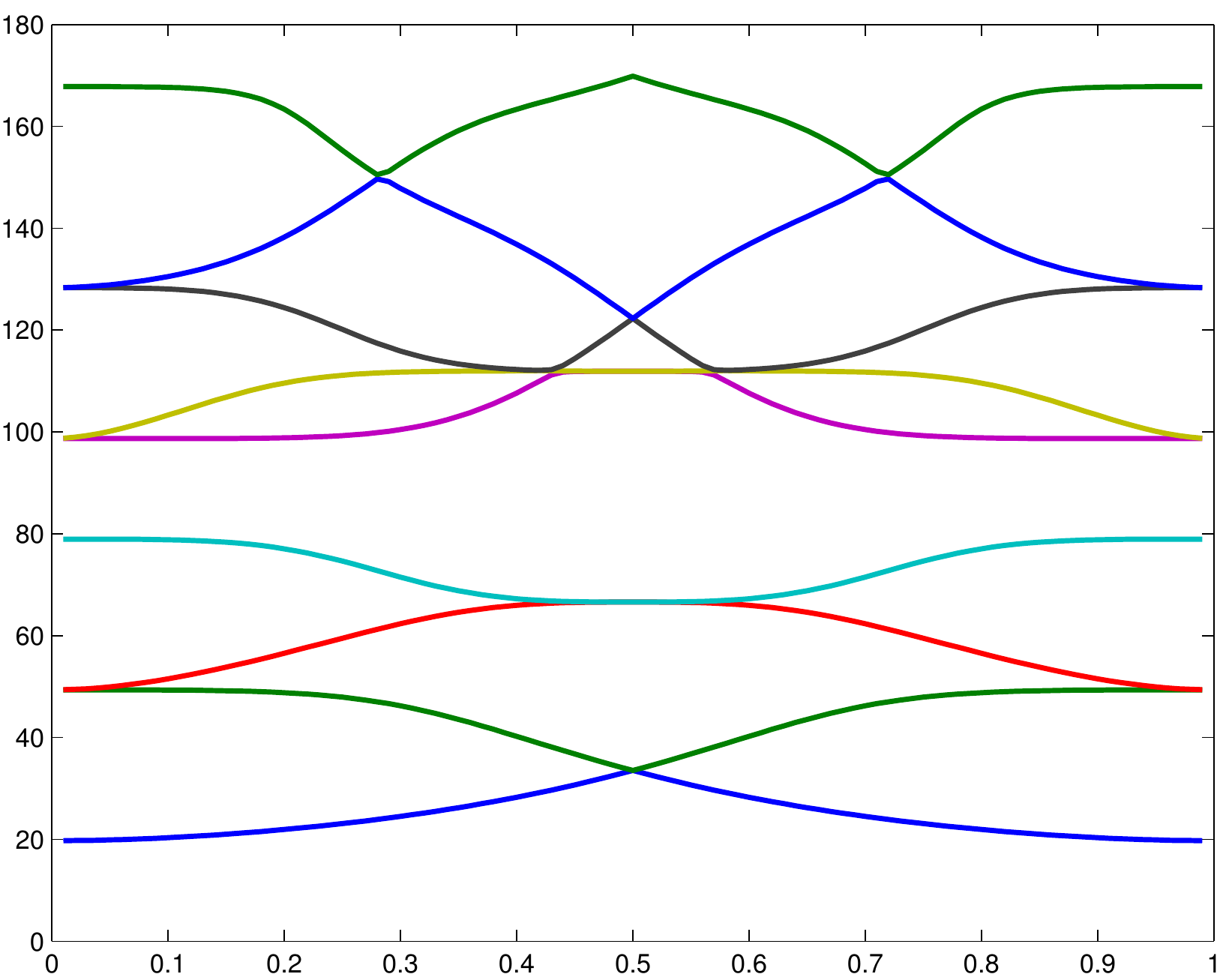}\quad\quad}
\caption{Aharonov-Bohm eigenvalues $\lambda^{AB}_{k}({\bf p})$ on the square as functions of the pole $\bf p$.\label{chapBH.fig.ABsquare}}
\end{center}
\end{figure}
Figure~\ref{fig7a} gives the first eigenvalues of $H^{AB}(\dot\Omega_{{\bf p}})$ in function of ${\bf p}$ in the square $\Omega=[0,1]^2$ and demonstrates \eqref{chapBH.contpole}. When ${\bf p} = (1/2,1/2)$, the eigenvalue is extremal and always double (see in particular Figures~\ref{chapBH.fig.diagcarre} and \ref{chapBH.fig.medcarre} which represent the first eigenvalues when the pole is either on a diagonal line or on a bisector line).

Let us analyze what can happen at an extremal point (see \cite[Theorem 1.1]{MR2815036}, \cite[Theorem 1.5]{MR3270167}).
\begin{theorem}\label{chapBH.thm.BNNT2}
Suppose $\alpha=1/2$. For any $k\geq1$ and ${\bf p}\in\Omega$, we denote by $\varphi_{k}^{AB,{\bf p}}$ an eigenfunction associated with $\lambda_{k}^{AB}({\bf p})\,$.
\begin{itemize}[label=--,itemsep=-2pt]
\item If $\varphi_{k}^{AB,{\bf p}}$ has a zero of order $1/2$ at ${\bf p}\in\Omega$, then either $\lambda_{k}^{AB}({\bf p})$ has multiplicity at least $2$, or ${\bf p}$ is not an extremal point of the map ${\bf x}\mapsto \lambda_{k}^{AB}({\bf x})$.
\item If ${\bf p}\in\Omega$ is an extremal point of ${\bf x}\mapsto \lambda_{k}^{AB}({\bf x})\,$, then either $\lambda_{k}^{AB}({\bf p})$ has multiplicity at least $2$, or $\varphi_{k}^{AB,{\bf p}}$ has a zero of order $m/2$ at ${\bf p}$, $m\geq3$ odd.
\end{itemize}
\end{theorem}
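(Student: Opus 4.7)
The two bullet points are in fact equivalent under the structure theorem for $K_{\bf p}$-real eigenfunctions: the order of vanishing at the pole must be $m/2$ with $m$ a positive odd integer, so "not of order $1/2$" means exactly "order $m/2$ with $m\geq 3$ odd." Thus it suffices to prove either one; I will argue the contrapositive of the first. The strategy is to establish a Hadamard-type variation formula expressing the gradient $\nabla_{\bf p}\lambda_k^{AB}({\bf p},1/2)$ in terms of the leading coefficient of the $r^{1/2}$ term of $\varphi_k^{AB,{\bf p}}$ at ${\bf p}$, and then to read off the claimed dichotomy.

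First I would recall the local structure result from \cite{MR1690957,MR1994668}: in polar coordinates $(r,\theta)$ centered at ${\bf p}$, a $K_{\bf p}$-real eigenfunction admits an asymptotic expansion
\begin{equation*}
\varphi_k^{AB,{\bf p}}({\bf x}) = r^{m/2}\bigl(a\cos(m\theta/2)+b\sin(m\theta/2)\bigr) + O(r^{m/2+1}),
\end{equation*}
with $m$ odd and $(a,b)\neq(0,0)$ the leading pair. A zero of order $1/2$ at ${\bf p}$ corresponds to $m=1$. In what follows I fix an $L^2$-normalization and assume $\lambda_k^{AB}({\bf p})$ is simple, so that ${\bf q}\mapsto \lambda_k^{AB}({\bf q},1/2)$ is a smooth function of ${\bf q}$ in a neighborhood of ${\bf p}$ in $\Omega$.

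Second, I would derive the Hadamard-type formula for $\nabla_{\bf p}\lambda_k^{AB}({\bf p})$. The technical difficulty is that the form domain of $H^{AB}(\dot\Omega_{\bf q},1/2)$ moves with ${\bf q}$, so one cannot differentiate under the integral sign naively. I would therefore pull everything back to the fixed punctured domain $\dot\Omega_{\bf p}$ by a local gauge transformation $u\mapsto e^{i\Phi_{{\bf p},{\bf q}}/2}u$, where $\Phi_{{\bf p},{\bf q}}$ is a smooth phase outside a small disk containing both ${\bf p}$ and ${\bf q}$, chosen so that the transformed operator differs from $H^{AB}(\dot\Omega_{\bf p},1/2)$ by a regular, smoothly ${\bf q}$-dependent perturbation away from the poles. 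Standard first-order perturbation theory then applies on the complement of a small disk $B_\rho({\bf p})$, and the Hellmann-Feynman calculation produces, in addition to regular bulk terms that cancel by the pointwise relation $\mathrm{curl}\,{\bf A}^{\bf p}=0$, a boundary integral on $\partial B_\rho({\bf p})$. Substituting the asymptotic expansion above and letting $\rho\to 0$, the boundary integral converges to an explicit quadratic expression in $(a,b)$ that is nonzero whenever $m=1$ and identically zero whenever $m\geq 3$; schematically
\begin{equation*}
\nabla_{\bf p}\lambda_k^{AB}({\bf p}) \;=\; C\,\bigl(a^2-b^2,\, 2ab\bigr)\,\mathbf{1}_{\{m=1\}},\qquad C\neq 0.
\end{equation*}
(The exact form — up to rotation of coordinates — is the one appearing in Noris-Terracini \cite{MR2815036} and \cite{MR3270167}.)

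Third, the conclusion is immediate. If $\varphi_k^{AB,{\bf p}}$ vanishes to order exactly $1/2$ at ${\bf p}$ (so $m=1$, $(a,b)\neq 0$) and $\lambda_k^{AB}({\bf p})$ is simple, then the formula above gives $\nabla_{\bf p}\lambda_k^{AB}({\bf p})\neq 0$, so ${\bf p}$ is not a critical point and a fortiori not an extremum; this is the first bullet. For the second bullet, if ${\bf p}\in\Omega$ is extremal and $\lambda_k^{AB}({\bf p})$ is simple, then $\nabla_{\bf p}\lambda_k^{AB}({\bf p})=0$, forcing $m\neq 1$, and since $m$ is a positive odd integer we obtain $m\geq 3$.

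The main obstacle is the rigorous derivation of the boundary-term formula in the second step. The eigenfunction is only in $H^1$ near ${\bf p}$ (and behaves like $r^{1/2}$ when $m=1$), so the gauge transformation and the limit $\rho\to 0$ must be handled with care: one has to check that the bulk contribution vanishes, that the gauge phase can be chosen smooth away from the poles, and that the inner-boundary integral on $\partial B_\rho({\bf p})$ converges to the claimed quadratic form in $(a,b)$, using precisely the asymptotic expansion of the normalized eigenfunction. This is exactly the analysis carried out in \cite{MR2815036,MR3270167}, and it is the single computation on which the whole theorem rests.
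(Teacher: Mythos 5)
The survey does not prove Theorem~\ref{chapBH.thm.BNNT2}; it only states it with pointers to \cite[Theorem 1.1]{MR2815036} and \cite[Theorem 1.5]{MR3270167}, so your proposal can only be measured against those references. Your overall architecture is sound: the reduction of the two bullets to one another via the structure theorem ($K_{\bf p}$-real eigenfunctions vanish like $r^{m/2}$ with $m$ odd), the identification of a first-order variation of $\lambda_k^{AB}$ with a quadratic expression in the leading coefficients $(a,b)$, and the observation that $(a^2-b^2,2ab)\neq(0,0)$ whenever $(a,b)\neq(0,0)$ are all correct and consistent with the sharp asymptotics later proved by Abatangelo--Felli, namely $\lambda_k^{AB}({\bf p})-\lambda_k^{AB}({\bf p}_0)\sim C\,|{\bf p}-{\bf p}_0|^{m}\cos\bigl(m(\alpha-\alpha_0)\bigr)$, whose $m=1$ case is exactly your gradient formula.

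The genuine gap is your standing assumption that simplicity of $\lambda_k^{AB}({\bf p})$ makes ${\bf q}\mapsto\lambda_k^{AB}({\bf q})$ smooth near ${\bf p}$, which you then feed into a Hellmann--Feynman computation. This is not a routine consequence of Kato perturbation theory: the family $H^{AB}(\dot\Omega_{\bf q},\tfrac12)$ is not an analytic family of type (A) in ${\bf q}$ because the singularity of the potential (and the form domain) moves with the pole, and the gauge transformation you invoke only identifies the operators \emph{outside} a disk containing both poles, leaving precisely the singular region where the eigenfunction is merely $H^1$ and behaves like $r^{1/2}$. Even continuity of ${\bf p}\mapsto\lambda_k^{AB}({\bf p})$ is the content of Theorem~\ref{chapBH.thm.BNNT} and required real work. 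The proofs in \cite{MR2815036} and \cite{MR3270167} avoid this circularity: they do not differentiate the eigenvalue at all, but move the pole along the single nodal arc emanating from ${\bf p}$ (which exists exactly when $m=1$) and compare Rayleigh quotients directly, using transplanted test functions to show that $\lambda_k^{AB}$ strictly increases for displacements on one side and strictly decreases on the other, so ${\bf p}$ is not extremal. If you want to keep your Hadamard-formula route, you must first establish the differentiability (this is essentially the content of the later Abatangelo--Felli analysis, which itself proceeds by blow-up and Almgren-frequency arguments rather than by assuming regularity); otherwise the cleaner path to the stated theorem is the direct Rayleigh-quotient comparison, which needs no a priori smoothness.
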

This theorem gives an interesting necessary condition for candidates to be minimal partitions. Indeed, knowing the behavior of the eigenvalues of Aharonov-Bohm operator, we can localize the position of the critical point for which the associated eigenfunction can produce a nice partition (with singular point where an odd number of lines end). We observe in Figure~\ref{chapBH.fig.ABsquare} that the eigenvalue is never simple at an extremal point.

When there are several poles, the continuity result of Theorem~\ref{chapBH.thm.BNNT} still holds. We will briefly address this result (see \cite{MR3390812} for the proof and more details). This is rather clear in $\Omega^\ell \setminus \mathcal C$, where $\mathcal C$ denotes the ${{\bf P}}$'s such that ${\bf p}_i \neq {\bf p}_j$ when $i\neq j$. It is then convenient to extend the function ${\bf P} \mapsto \lambda^{AB}_k({\bf P}, {\boldsymbol{\alpha}} )$ to $(\mathbb R^2)^\ell$. We define $\lambda^{AB}_k({\bf P}, {\boldsymbol{\alpha}})$ as the $k$-th eigenvalue of $H^{AB}(\dot\Omega_{\tilde{\bf P}}, \tilde{\boldsymbol{\alpha}})$, where the $m$-tuple $\tilde{\bf P}=(\tilde{\bf p}_1,\dots,\tilde{\bf p}_{m})$ contains once, and only once, each point appearing in ${\bf P}=({\bf p}_1,\dots,{\bf p}_{\ell})$ and where $\tilde{\boldsymbol{\alpha}}=(\tilde\alpha_1,\dots,\tilde\alpha_M)$ with
$\tilde\alpha_k=\sum_{j,\,{\bf p}_j=\tilde{\bf p}_k}\alpha_j,$ for $1\leq k\leq m\,.$
\begin{theorem} \label{chapBH.thm.EigCont}
If $k\ge 1$ and $\boldsymbol{\alpha} \in \mathbb R^\ell$, then the function ${\bf P}\mapsto \lambda^{AB}_k ({\bf P},{\boldsymbol{\alpha}} )$ is continuous in $\mathbb R^{2\ell}$.
\end{theorem}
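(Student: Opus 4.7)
My strategy is to work with the magnetic quadratic form
\begin{equation*}
q_{{\bf P},\boldsymbol{\alpha}}(u)=\int_\Omega \bigl|(-i\nabla-{\bf A}_{\boldsymbol{\alpha}}^{\bf P})u\bigr|^2\,d{\bf x}
\end{equation*}
on its natural form domain $\mathcal F_{{\bf P},\boldsymbol{\alpha}}$ (the closure of $C_0^\infty(\dot\Omega_{\tilde{\bf P}})$ for the norm associated with $q+\|\cdot\|_{L^2}^2$), and to deduce continuity of each $\lambda_k^{AB}({\bf P},\boldsymbol{\alpha})$ from a two-sided passage to the limit in the min-max principle. Fix ${\bf P}_0\in(\mathbb R^2)^\ell$ and a sequence ${\bf P}_n\to{\bf P}_0$. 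Let $\tilde{\bf P}_0=(\tilde{\bf p}_1,\ldots,\tilde{\bf p}_m)$ be the distinct limits, let $C_s(n)=\{j:{\bf p}_j^{(n)}\to\tilde{\bf p}_s\}$, and observe that $\tilde\alpha_{0,s}=\sum_{j\in C_s(n)}\alpha_j$ coincides with the flux prescribed by the extended definition at $\tilde{\bf p}_s$.

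\textbf{Gauge reduction.} Pick a small $\rho>0$, set $B_s=B(\tilde{\bf p}_s,\rho)$ and $U_\rho=\Omega\setminus\bigcup_s\overline{B_s}$. On $U_\rho$ the closed one-form
\begin{equation*}
\omega_n=\sum_{j=1}^\ell \alpha_j\,{\bf A}^{{\bf p}_j^{(n)}}-\sum_{s=1}^m \tilde\alpha_{0,s}\,{\bf A}^{\tilde{\bf p}_s}
\end{equation*}
has zero circulation around each hole of $U_\rho$ (a loop around $B_s$ encloses the whole cluster $C_s(n)$ together with $\tilde{\bf p}_s$, giving $\tilde\alpha_{0,s}-\tilde\alpha_{0,s}=0$; a loop that encloses none of them yields $0$ trivially; when $\Omega$ is multiply connected, the poles and their limits lie on the same side of each hole of $\Omega$ for $n$ large, so the corresponding circulations also cancel). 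Hence $\omega_n=d\phi_n$ for a smooth $\phi_n$ on $U_\rho$, normalized so that $\phi_n\to 0$ uniformly on compacts of $U_\rho$. Multiplication by $e^{i\phi_n}$ intertwines $H^{AB}(\dot\Omega_{{\bf P}_n},\boldsymbol{\alpha})$ and $H^{AB}(\dot\Omega_{\tilde{\bf P}_0},\tilde{\boldsymbol{\alpha}}_0)$ on $U_\rho$, reducing all the analysis outside the disks $B_s$ to a limit of smooth gauge transformations that tend to the identity.

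\textbf{Two-sided estimate and main obstacle.} For the upper bound, let $\psi_1,\ldots,\psi_k$ be $L^2$-orthonormal eigenfunctions of $H^{AB}(\dot\Omega_{\tilde{\bf P}_0},\tilde{\boldsymbol{\alpha}}_0)$ for its first $k$ eigenvalues, and let $\chi_\rho$ be a cut-off equal to $1$ on $U_\rho$ and vanishing on each $B(\tilde{\bf p}_s,\rho/2)$. The test functions $\tilde\psi_j^{(n)}=\chi_\rho\,e^{-i\phi_n}\,\psi_j$ lie in $\mathcal F_{{\bf P}_n,\boldsymbol{\alpha}}$ (they vanish near every pole of ${\bf P}_n$ for $n$ large) and an elementary computation yields
\begin{equation*}
q_{{\bf P}_n,\boldsymbol{\alpha}}(\tilde\psi_j^{(n)})=\int_\Omega\chi_\rho^{\,2}\bigl|(-i\nabla-{\bf A}_{\tilde{\boldsymbol{\alpha}}_0}^{\tilde{\bf P}_0})\psi_j\bigr|^2\,d{\bf x}+\sum_s\int_{B_s}|\nabla\chi_\rho|^2|\psi_j|^2\,d{\bf x}.
\end{equation*}
Sending $n\to\infty$ and then $\rho\to 0$, the boundary term vanishes by a Hardy-type inequality for $H^{AB}$ at $\tilde{\bf p}_s$ (which, when $\tilde\alpha_{0,s}\in\mathbb Z$, degenerates to the obvious statement that the operator is gauge-equivalent to $-\Delta$ near $\tilde{\bf p}_s$), whence $\limsup_n\lambda_k^{AB}({\bf P}_n,\boldsymbol{\alpha})\leq\lambda_k^{AB}({\bf P}_0,\boldsymbol{\alpha})$. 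The matching lower bound is obtained by taking a normalized family $u_1^{(n)},\ldots,u_k^{(n)}$ of first-$k$ eigenfunctions at ${\bf P}_n$, gauging by $e^{-i\phi_n}$, and extracting a subsequential limit: weak $H^1_{\mathrm{loc}}(\Omega\setminus\tilde{\bf P}_0)$ compactness combined with Rellich--Kondrachov on compacts and lower semicontinuity of the limit form delivers an $L^2(\Omega)$-orthonormal family in $\mathcal F_{{\bf P}_0,\boldsymbol{\alpha}}$ spanning a valid $k$-dimensional test space for $H^{AB}(\dot\Omega_{\tilde{\bf P}_0},\tilde{\boldsymbol{\alpha}}_0)$. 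The main obstacle is the collision regime: inside $B_s$ the potential ${\bf A}_{\boldsymbol{\alpha}}^{{\bf P}_n}$ exhibits a microstructure on scale $\mathrm{diam}\,C_s(n)\to 0$ that is \emph{not} uniformly close to $\tilde\alpha_{0,s}{\bf A}^{\tilde{\bf p}_s}$. Showing that the $L^2$-mass of the eigenfunctions on $B_s$ vanishes as $\rho\to 0$ uniformly in $n$ requires a multi-pole Hardy inequality whose constant depends only on $\tilde\alpha_{0,s}$ and not on the internal geometry of the cluster; this uniform estimate is the technical heart of the argument and is the contribution carried out in \cite{MR3390812}.
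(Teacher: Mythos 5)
First, a point of comparison: the paper does not actually prove Theorem~\ref{chapBH.thm.EigCont}; it only explains how to extend the definition of $\lambda_k^{AB}({\bf P},\boldsymbol{\alpha})$ to coalescing configurations and then refers to \cite{MR3390812} for the proof, noting that continuity is ``rather clear'' away from the coalescence set. Your outline follows essentially the strategy of that reference (min-max, gauge reduction away from the limit poles, cut-off test functions for the upper bound, compactness and lower semicontinuity for the lower bound), so there is no in-paper argument to measure it against. Judged on its own terms, however, it is not a proof: you state explicitly that the decisive step --- a uniform-in-$n$ estimate showing that no $L^2$-mass or energy of the eigenfunctions concentrates in the shrinking clusters $B_s$, with constants independent of the internal geometry of the colliding poles --- ``is the contribution carried out in \cite{MR3390812}.'' That uniform estimate \emph{is} the theorem at the collision set; deferring it to the very reference being reproved leaves the argument circular. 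The genuinely delicate case, e.g.\ two poles of flux $1/2$ merging into a single point of total flux $1\equiv 0$ so that the limit operator is the plain Dirichlet Laplacian, is exactly the one your sketch does not address.

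Two further technical slips. First, your identity for $q_{{\bf P}_n,\boldsymbol{\alpha}}(\chi_\rho e^{-i\phi_n}\psi_j)$ omits the cross term $2\operatorname{Re}\int\chi_\rho\,\overline{(-i\nabla-{\bf A})\psi_j}\cdot(-i\nabla\chi_\rho)\psi_j$; the correct route is the IMS localization formula combined with the eigenvalue equation, which gives $q(\chi_\rho\psi_j)=\lambda_j\|\chi_\rho\psi_j\|^2+\bigl\||\nabla\chi_\rho|\psi_j\bigr\|^2$. Second, the error term $\int_{B_s}|\nabla\chi_\rho|^2|\psi_j|^2$ does not tend to $0$ with a single-scale cut-off when $\tilde\alpha_{0,s}\in\mathbb Z$: there the limit eigenfunction is locally gauge-equivalent to an ordinary Dirichlet eigenfunction, need not vanish at $\tilde{\bf p}_s$, and $|\nabla\chi_\rho|^2\sim\rho^{-2}$ integrated over an annulus of area of order $\rho^2$ only gives $O(1)$. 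You need the standard logarithmic (zero-capacity) cut-off between radii $\rho^2$ and $\rho$ in that case; the Hardy inequality only rescues you when the cluster flux is non-integer. Both of these are repairable; the gap identified in the first paragraph is the one that matters.
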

This result generalizes Theorems~\ref{chapBH.thm.BNNT} and \ref{chapBH.thm.BNNT2}. It implies in particular continuity of the eigenvalues when one point tends to $\partial\Omega$, or in the case of coalescing points. For example, take $\ell=2$, $\alpha_1=\alpha_2 = 1/2$, ${\bf P}=({\bf p}_1,{\bf p}_2) $ and suppose that ${\bf p}_1$ and ${\bf p}_2$ tend to some ${\bf p}$ in $\Omega$. One obtains in this case that $\lambda^{AB}_k({\bf P}, {\boldsymbol{\alpha}} )$ tends to $\lambda_k(\Omega)$.

\begin{remark}
More results on the Aharonov-Bohm eigenvalues as function of the poles can be found in \cite{MR2836255,MR2815036,MR3270167, MR3426097, MR3390812,MR3641642,1612.01330,1706.05247,1611.06750,1707.04416}. We have only emphasized in this section on the results which have direct applications to the research of candidates for minimal partitions.\\
In many of the papers analyzing minimal partitions, the authors refer to a double covering argument. Although this point of view (which appears first in \cite{MR1690957} in the case of domains with holes) is essentially equivalent to the Aharonov approach, it has a more geometrical flavor. \end{remark}

\section{On the asymptotic\index{asymptotic} behavior of minimal $k$-partitions}\label{chapBH.sec.klarge}
The hexagon has fascinating properties and appears naturally in many contexts (for example the honeycomb).  Let $\hexagon$ be a regular hexagon of unit area. If we consider polygons generating a tiling, the ground state energy $\lambda(\hexagon)$ gives the smallest value (at least in comparison with the square, the rectangle and the equilateral triangle). In this section we analyze the asymptotic behavior of minimal $k$-partitions as $k \to +\infty\,$.

\subsection{The hexagonal conjecture}\label{chapBH.ss9.1}
\begin{conjecture}\label{chapBH.ConjAs1}
The limit of ${ {\mathfrak L_k(\Omega)}/{k}}$ as \index{hexagonal conjecture}
${ k\to +\infty}$ exists and $$
 |\Omega|\lim_{k\to +\infty} \frac{\mathfrak L_k(\Omega)}{k}=\lambda(\hexagon)\;.
$$
\end{conjecture}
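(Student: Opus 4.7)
The plan is to split the conjecture into an upper bound (tractable by an explicit hexagonal tiling construction) and a matching lower bound (the true geometric difficulty), from which the existence of the limit and its value follow simultaneously.

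First I would establish
\[
\limsup_{k\to\infty} \frac{|\Omega|\,\mathfrak L_k(\Omega)}{k} \le \lambda(\hexagon).
\]
Fix the reference hexagon $\hexagon$ of unit area and use the scaling $\lambda(\hexagon_s)=\lambda(\hexagon)/s$ for a hexagon of area $s$. Choose $s_k = (|\Omega|/k)(1-O(k^{-1/2}))$ so that a honeycomb packing of $\Omega$ by translates of $\hexagon_{s_k}$ contains at least $k$ cells entirely inside $\Omega$; the correction accounts for a boundary waste of order $|\partial\Omega|\sqrt{s_k}$. Select $k$ such cells and absorb the leftover strip $\Omega\setminus \bigcup_{i=1}^k \hexagon_i$ by merging each of its connected components with an adjacent cell. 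By domain monotonicity of Dirichlet eigenvalues, each (possibly enlarged) cell has ground state at most $\lambda(\hexagon)/s_k = (k/|\Omega|)\lambda(\hexagon)(1+o(1))$, which yields the stated upper bound upon taking the maximum over cells and sending $k\to\infty$.

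Second, I would attempt the matching lower bound
\[
\liminf_{k\to\infty} \frac{|\Omega|\,\mathfrak L_k(\Omega)}{k} \ge \lambda(\hexagon).
\]
The naive estimate \eqref{chapBH.fkv2} only gives $\mathfrak L_k(\Omega) \ge k\lambda(\Circle)/|\Omega|$, which is strictly weaker since $\lambda(\Circle)<\lambda(\hexagon)$. The gap is the ``tiling penalty'': disks optimize Faber-Krahn but cannot tile the plane, so most cells of a minimal partition must be quantitatively far from disks. To exploit this I would invoke a stability version of Faber-Krahn of the form
\[
\lambda(D)\,|D| \ge \lambda(\Circle)\bigl(1+c\,\mathcal{A}(D)^2\bigr),
\]
where $\mathcal{A}(D)$ is the Fraenkel asymmetry (Brasco--De~Philippis--Velichkov, Fusco--Maggi--Pratelli, etc.), and combine it with a tiling lower bound on $\sum_i |D_i|\,\mathcal{A}(D_i)^2$ in the spirit of Hales' honeycomb theorem to produce an effective constant matching the gap $\lambda(\hexagon)-\lambda(\Circle)$.

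The main obstacle is precisely this last matching. Hales' theorem concerns the \emph{perimeter} functional on tilings, while here the relevant functional is the Dirichlet eigenvalue; the map $D\mapsto \lambda(D)$ is nonlinear and the stability constant $c$ above is generic rather than shape-by-shape sharp at the hexagon. A complete proof seems to require either a specifically hexagonal Faber-Krahn inequality valid for tile-admissible sets (for instance convex polygons with at most six sides), or a new variational argument exploiting the pair compatibility condition $\lambda_2(D_{ij})=\mathfrak L_k(\Omega)$ stated after Theorem~\ref{chapBH.thstrreg} together with sharp asymptotic information on the shapes of cells deep inside $\Omega$. This is why Conjecture~\ref{chapBH.ConjAs1} remains open in full generality, despite the clean upper bound and extensive numerical evidence.
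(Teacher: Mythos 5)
The statement you are addressing is a \emph{conjecture}: the paper offers no proof of it, only the upper bound \eqref{chapBH.upperboundhexa} (obtained, as in your first step, by keeping the hexagons of a scaled honeycomb tiling that lie inside $\Omega$ and using monotonicity) and the strictly weaker Faber--Krahn lower bound \eqref{chapBH.lowerboundhexa}. Your upper-bound argument is essentially the paper's; one simplification is available to you: since a $k$-partition is merely a family of $k$ disjoint open subsets of $\Omega$ and need not exhaust $\Omega$, the step where you merge the leftover strip into adjacent cells is unnecessary (though harmless, by domain monotonicity). Your logical frame is also sound: matching $\limsup$ and $\liminf$ bounds would simultaneously give existence of the limit and its value.

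The genuine gap is the lower bound, and you correctly identify it as such --- but the specific route you sketch cannot close it with currently known tools, and it is worth saying precisely why. A quantitative Faber--Krahn inequality $\lambda(D)|D|\ge\lambda(\Circle)(1+c\,\mathcal A(D)^2)$ improves the constant $\lambda(\Circle)$ in \eqref{chapBH.lowerboundhexa} only by the small, non-sharp amount coming from the generic stability constant $c$ and from a packing argument quantifying how many cells must have asymmetry bounded below; this is exactly what Bourgain \cite{MR3340367} and Steinerberger \cite{MR3272823} carry out, and the resulting constants remain far below $\lambda(\hexagon)$. Hales' theorem \cite{MR1797293} concerns the perimeter functional and there is no known transfer of its rigidity to the first Dirichlet eigenvalue; even the analogue of a ``hexagonal Faber--Krahn inequality'' for convex polygons with at most six sides (the P\'olya-type conjecture that the regular $n$-gon minimizes $\lambda$ among $n$-gons of given area) is open for $n\ge 5$. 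So your proposal is an accurate diagnosis of the state of the art rather than a proof; note also that the paper records the easier implication that Conjecture~\ref{chapBH.ConjAs2} (the $p=1$ version, via $\mathfrak L_{k,1}\le\mathfrak L_k$ and \eqref{chapBH.upperboundhexa}) would imply Conjecture~\ref{chapBH.ConjAs1}, which is another possible line of attack you did not mention.
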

Similarly, one has
\begin{conjecture}\label{chapBH.ConjAs2}
The limit of ${ {\mathfrak L_{k,1}(\Omega)}/{k}}$ as
${ k\to +\infty}$ exists and
\begin{equation}
 |\Omega|\lim_{k\to +\infty} \frac{\mathfrak L_{k,1}(\Omega)}{k}=\lambda(\hexagon)\;.
\end{equation}
\end{conjecture}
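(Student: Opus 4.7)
The plan is to split the claim into separate upper and lower bounds, and to attack the existence of the limit either as a byproduct of the sandwich or through a parallel Fekete-type argument. The upper bound should be tractable and constructive; the lower bound is the honest obstacle.

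For the upper bound I would use a regular hexagonal tiling of the plane by cells of area $|\Omega|/k$ as a test partition. By the scaling $\lambda(t\hexagon)=t^{-2}\lambda(\hexagon)$, every cell fully contained in $\Omega$ contributes $(k/|\Omega|)\lambda(\hexagon)$. Since $\partial\Omega$ is piecewise $C^{1,+}$, only $O(\sqrt{k})$ cells meet the boundary; cutting them against $\partial\Omega$ or merging them with interior neighbors yields cells whose individual eigenvalues are $O(k)$, with a constant depending only on $\Omega$. Summing,
\begin{equation*}
\sum_{i} \lambda(D_i)\le \frac{\lambda(\hexagon)}{|\Omega|}\,k^{2}+O(k^{3/2}),
\end{equation*}
so dividing by $k$ yields $|\Omega|\,\mathfrak L_{k,1}(\Omega)/k\le \lambda(\hexagon)+O(k^{-1/2})$. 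The averaging feature of $p=1$ is forgiving here: the $O(\sqrt k)$ bad boundary cells are absorbed in the average, which is why this conjecture is expected to be more accessible than its $p=\infty$ counterpart (Conjecture~\ref{chapBH.ConjAs1}).

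For the matching lower bound, the obvious start is to apply Faber-Krahn \eqref{chapBH.eq.FK} cell by cell. Summing and using Cauchy-Schwarz with $\sum_i|D_i|\le|\Omega|$ gives only
\begin{equation*}
\frac{1}{k}\sum_{i=1}^{k}\lambda(D_i)\ \ge\ \frac{\lambda(\Circle)}{|\Omega|}\,k,
\end{equation*}
i.e.\ $\liminf_k |\Omega|\,\mathfrak L_{k,1}(\Omega)/k\ge \lambda(\Circle)$, a constant strictly smaller than $\lambda(\hexagon)$ by the strict Faber-Krahn inequality. Closing the gap is the main obstacle, and it is essentially an eigenvalue honeycomb conjecture. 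My plan would be to combine a quantitative Faber-Krahn deficit estimate, in the spirit of the Steinerberger-Bourgain-Donnelly refinements cited after \eqref{chapBH.eq.FK}, of the form $|D|\lambda(D)-\lambda(\Circle)\ge c\,\delta(D)^{2}$ where $\delta$ is a scale-invariant Fraenkel-type asymmetry, with a tiling obstruction forcing $\sum_i \delta(D_i)^2$ to be bounded below by the value attained when all cells are congruent regular hexagons. The tiling obstruction is the step I expect to resist: Hales's honeycomb theorem controls perimeter, not eigenvalue, and no sharp transfer between the two isoperimetric problems is known. Any honest completion here would constitute a genuinely new hexagonal isoperimetric result for the first Dirichlet eigenvalue, which is precisely why the statement is still a conjecture.

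Independently, one can attempt to establish existence of $\ell:=\lim_k |\Omega|\,\mathfrak L_{k,1}(\Omega)/k$ without identifying it. Given two disjoint subdomains $\Omega_1,\Omega_2$ with $|\Omega\setminus(\Omega_1\cup\Omega_2)|$ small, distributing the $k$ cells between them near-optimally as $k_1+k_2$ and optimizing over $(k_1,k_2)$ gives an almost-superadditivity relation; combined with a Fekete-type lemma and the scale invariance of $|\Omega|\,\mathfrak L_{k,1}(\Omega)/k$, this should yield existence of $\ell$ as a universal constant on nice planar domains, sandwiched by $\lambda(\Circle)\le\ell\le\lambda(\hexagon)$. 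Identifying the upper endpoint as the true value of $\ell$ remains the central obstacle discussed above, and is what I expect to be out of reach without genuinely new input.
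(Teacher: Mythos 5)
This statement is an open conjecture; the paper does not prove it, and you correctly refrain from claiming to. Your outline reproduces exactly what the paper records as known: the upper bound is \eqref{chapBH.upperboundhexa} (the paper obtains it for $\mathfrak L_k$ by keeping the hexagons of a tiling contained in $\Omega$ and invoking domain monotonicity, then passes to $\mathfrak L_{k,1}\leq \mathfrak L_k$; your variant, cutting or merging the $O(\sqrt{k})$ boundary cells and letting the $p=1$ average absorb them, is a fine alternative, modulo the bookkeeping needed to land on exactly $k$ cells), and the lower bound is \eqref{chapBH.lowerboundhexa}, obtained precisely as you say by summing Faber--Krahn and Cauchy--Schwarz, with the Bourgain--Steinerberger refinements improving the constant but not reaching $\lambda(\hexagon)$. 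Your diagnosis of the obstruction is also the accepted one: Hales's theorem controls perimeter, not the first Dirichlet eigenvalue, and no transfer between the two hexagonal isoperimetric problems is known. So there is nothing to correct in substance, only to calibrate: what you present is the state of the art, not a proof strategy with a realistic completion.

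One step deserves a concrete warning: the ``Fekete-type'' argument for the mere existence of the limit does not go through as sketched. Gluing near-optimal partitions of disjoint subdomains $\Omega_1,\Omega_2\subset\Omega$ gives
$(k_1+k_2)\,\mathfrak L_{k_1+k_2,1}(\Omega)\leq k_1\mathfrak L_{k_1,1}(\Omega_1)+k_2\mathfrak L_{k_2,1}(\Omega_2)$,
but to close a subadditivity loop you must compare $\mathfrak L_{k_i,1}(\Omega_i)$ back to $\mathfrak L_{\cdot,1}(\Omega)$, which requires the $\Omega_i$ to be rescaled copies of $\Omega$; a general bounded domain cannot be packed by scaled copies of itself up to negligible area. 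This is why even the existence of the limit, and its independence of $\Omega$, is part of the conjecture (attributed to Van den Berg and appearing in Caffarelli--Lin \cite{MR2304268}) rather than a separate, easier fact. The only unconditional implication the paper records between the two hexagonal conjectures is the one you also note, namely that Conjecture~\ref{chapBH.ConjAs2} implies Conjecture~\ref{chapBH.ConjAs1} via $\mathfrak L_{k,1}\leq\mathfrak L_k$ and \eqref{chapBH.upperboundhexa}.
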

These conjectures, that we learned from M. Van den Berg in 2006, are also mentioned in Caffarelli-Lin \cite{MR2304268} for $\mathfrak L_{k,1}$ and  imply  that the limit is independent of $\Omega\,$.
Of course the optimality of the regular hexagonal tiling appears in various contexts in physics.  By keeping the hexagons belonging to the intersection of $\Omega$ with the hexagonal tiling and using the monotonicity of $\mathfrak L_k$ for the inclusion, it is easy to show the upper bound in Conjecture \ref{chapBH.ConjAs1},
\begin{equation}\label{chapBH.upperboundhexa}
 |\Omega|\lim\sup_{k\to +\infty} \frac{\mathfrak L_k(\Omega)}{k} \leq \lambda(\hexagon)\;.
\end{equation}
We recall that the Faber-Krahn inequality\index{Faber-Krahn (inequality)} \eqref{chapBH.eq.FK} gives a weaker lower bound
\begin{equation}\label{chapBH.lowerboundhexa}
|\Omega| \frac{\mathfrak L_k(\Omega)}{k} \geq |\Omega| \frac{\mathfrak L_{k,1}(\Omega)}{k} \geq \lambda(\Circle) \,.
\end{equation}
Note that Bourgain \cite{MR3340367} and Steinerberger \cite{MR3272823} have recently improved the lower bound by using an improved Faber-Krahn inequality together with considerations on packing property by disks.\\
The inequality $\mathfrak L_{k,1}(\Omega) \leq \mathfrak L_k(\Omega)$ together with the upper bound \eqref{chapBH.upperboundhexa} shows that the second conjecture implies the first one. 
Conjecture \ref{chapBH.ConjAs1} has been explored in \cite{MR2598097} by checking numerically non trivial consequences of this conjecture. Other recent numerical computations devoted to $\lim_{k\to +\infty} \frac{1}{k} \mathfrak L_{k,1}(\Omega)$ and to the asymptotic structure of the minimal partitions are given by Bourdin-Bucur-Oudet \cite{MR2566585}.

The hexagonal conjecture leads to a natural corresponding hexagonal conjecture for the length of the boundary set, namely
\begin{conjecture}\label{chapBH.ConjAs3}
Let $\ell (\hexagon)= 2 \sqrt{2\sqrt{3}} $ be the length of the boundary of \hexagon\,. Then
\begin{equation}\label{chapBH.hcl}
\lim_{k\to +\infty} \frac{ |\partial\mathcal{D}_k|}{\sqrt{k}} = \frac{1}{2} \ell (\hexagon) \sqrt{ |\Omega|}\,.
\end{equation}
\end{conjecture}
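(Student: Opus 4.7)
The plan is to derive Conjecture~\ref{chapBH.ConjAs3} from Conjecture~\ref{chapBH.ConjAs1} by combining the conjectured hexagonal energy asymptotics with Hales' planar honeycomb theorem, which states that the regular hexagonal tiling asymptotically minimizes perimeter among partitions of a region into cells of prescribed area. The proof naturally splits into matching lower and upper bounds, both requiring the transfer of information between spectral and geometric quantities.

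\emph{Lower bound.} Let $\mathcal D_k=\{D_i\}_{1\le i\le k}$ be a minimal $k$-partition. The equipartition property recalled in Section~\ref{chapBH.ss4.4} gives $\lambda(D_i)=\mathfrak L_k(\Omega)$ for every $i$, and the Faber-Krahn inequality \eqref{chapBH.eq.FK} then forces $|D_i|\ge \lambda(\Circle)/\mathfrak L_k(\Omega)$, so every cell has area of order $|\Omega|/k$. Rescaling by the factor $\sqrt{k/|\Omega|}$ produces a partition $\{\tilde D_i\}$ of a dilated domain of area $k$ whose cells have uniformly bounded area from below. Hales' theorem, in a finite-region form with boundary corrections of order $\sqrt{k}$, then yields $\sum_i P(\tilde D_i) \ge k\,\ell(\hexagon) + o(k)$. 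Dividing by $2$ to account for shared edges and undoing the rescaling, one obtains $|\partial \mathcal D_k|\ge \tfrac12\, \ell(\hexagon)\sqrt{k|\Omega|} + o(\sqrt{k})$.

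\emph{Upper bound.} This direction is subtle because one must control the perimeter of the minimizer itself, not that of a hand-picked competitor. The natural strategy is a rigidity argument: show that any $k$-partition whose energy is close to $\lambda(\hexagon)k/|\Omega|$ must be geometrically close (in, say, $L^1$ or Hausdorff distance) to a hexagonal tiling with cells of area $|\Omega|/k$, so that its perimeter is bounded above by the hexagonal perimeter up to lower-order terms. Equivalently, one argues by contradiction: were $\limsup |\partial \mathcal D_k|/\sqrt{k}$ to exceed $\tfrac12\ell(\hexagon)\sqrt{|\Omega|}$, a blow-up along a subsequence at a point of bulk density would produce a planar partition simultaneously achieving optimal energy density and strictly super-hexagonal perimeter density, contradicting Hales.

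\emph{Main obstacle.} The crux is the quantitative rigidity step linking energy minimality to geometric proximity to the hexagonal tiling. Since Conjecture~\ref{chapBH.ConjAs1} is itself still open, promoting it into a quantitative stability statement — a tiling analog of the recent refinements of Faber-Krahn, but with hexagons rather than disks as the optimal shape — appears to be the essential difficulty, and explains why Conjecture~\ref{chapBH.ConjAs3} has resisted a direct attack.
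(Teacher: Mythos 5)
The statement you were asked to prove is presented in the paper as an open \emph{conjecture}: the authors give no proof, only the remark that it is discussed in \cite{MR3151084} in connection with Hales' honeycomb theorem \cite{MR1797293}. So there is no proof in the paper to compare against. What you have written is a strategy rather than a proof, and to your credit you say so explicitly. Your heuristic --- rescale the cells to unit area, invoke Hales for the lower bound, and hope for a quantitative rigidity statement to control the perimeter of the minimizer from above --- is exactly the circle of ideas the authors have in mind, and your identification of the rigidity step as the essential obstruction is accurate. The answer to the conjecture is also correctly normalized: $k$ hexagons of area $|\Omega|/k$ have total perimeter $\ell(\hexagon)\sqrt{k|\Omega|}$, and halving for shared edges gives \eqref{chapBH.hcl}.

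For the record, here is where the argument is incomplete even as a \emph{conditional} proof assuming Conjecture~\ref{chapBH.ConjAs1}. (a) In the lower bound, Faber--Krahn together with the equipartition property gives $|D_i|\ge \lambda(\Circle)/\mathfrak L_k(\Omega)\sim (\lambda(\Circle)/\lambda(\hexagon))\,|\Omega|/k$, and since $\lambda(\Circle)<\lambda(\hexagon)$ this only bounds the areas below by a constant strictly less than $1$ times the mean area $|\Omega|/k$. Hales' theorem concerns partitions into cells of \emph{equal} unit area; passing from ``each area at least $c<1$, areas averaging to $1$'' to the hexagonal perimeter bound is not automatic (undersized cells are precisely the regime in which the honeycomb bound can degrade), and the $O(\sqrt k)$ boundary correction for a general bounded $\Omega$ also needs justification. (b) The upper bound is worse than ``subtle'': $\mathcal D_k$ minimizes a spectral functional, not a perimeter functional, so nothing a priori excludes $|\partial\mathcal D_k|\gg\sqrt k$; even the one-sided estimate $|\partial\mathcal D_k|=O(\sqrt k)$ is open. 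Moreover, your blow-up argument presupposes compactness of the rescaled partitions, which would itself require a uniform perimeter or density bound --- exactly what is to be proved --- so as stated it is circular. In short, the proposal is a faithful account of the expected mechanism and of why it does not yet yield a proof; it should not be mistaken for one.
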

This point is discussed in \cite{MR3151084} in connection with the celebrated theorem of Hales \cite{MR1797293} proving the honeycomb conjecture.
\subsection{Lower bound for the number of singular points}
It has been established since 1925 (A. Stern, H. Lewy, J. Leydold, B\'erard-Helffer, see \cite{BH3} and references therein), that there are domains for which there exist an  infinite sequence of eigenvalues of the Laplacian  for which the corresponding eigenvalues have a fixed number of nodal domains and critical points in the zeroset. The next result (see \cite{MR3381015},  \cite{MR3639652}) shows that the situation is quite different for minimal partitions.
 \begin{theorem}
For any sequence  $(\mathcal D_k)_{k\in \mathbb N} $ of regular minimal $k$-partitions, we have
\begin{equation}\label{chapBH.eq.liminfk}
\liminf_{k\to \infty}\frac{\sharp X^{\sf odd}(\partial \mathcal D_k )}k >0\,.
\end{equation}
 \end{theorem}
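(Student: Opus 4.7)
The plan is to argue by contradiction: suppose that along some subsequence one has $\ell_k := \sharp X^{\sf odd}(\partial\mathcal D_k) = o(k)$ as $k\to\infty$.

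By the magnetic characterization (Theorem~\ref{chapBH.thchar}), each $\mathcal D_k$ is realized as the nodal partition of a $K_{{\bf P}_k}$-real eigenfunction $\psi_k$ of the Aharonov-Bohm Laplacian $H^{AB}(\dot\Omega_{{\bf P}_k})$, with eigenvalue $\mathfrak L_k(\Omega)$ and pole set ${\bf P}_k := X^{\sf odd}(\partial\mathcal D_k)$ of cardinality $\ell_k$; in particular $\mu(\psi_k)=k$. I would call a nodal cell $D_i$ of $\psi_k$ \emph{regular} if its closure is disjoint from ${\bf P}_k$, and \emph{polar} otherwise. Each odd singular point $p\in{\bf P}_k$ is touched by $\nu(p)\geq 3$ nodal cells, so there are at most $C\ell_k$ polar cells for some universal $C$; hence at least $k-C\ell_k$ of the nodal cells are regular.

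On each regular cell $D_i$ the magnetic potential ${\bf A}^{{\bf P}_k}$ is exact on a simply connected neighborhood, so a local gauge transformation identifies $\psi_k|_{D_i}$ with a sign-definite real Dirichlet eigenfunction of the Laplacian of eigenvalue $\mathfrak L_k(\Omega)$; it must therefore be the ground state, forcing $\lambda(D_i)=\mathfrak L_k(\Omega)$. Summing the Faber--Krahn inequality $|D_i|\lambda(D_i)\geq\lambda(\Circle)$ over the regular cells (and using $\sum_i |D_i|\leq|\Omega|$) yields the key bound
\[
(k-C\ell_k)\,\lambda(\Circle) \;\leq\; |\Omega|\,\mathfrak L_k(\Omega).
\]
To conclude one combines this with an asymptotic upper estimate of the form $|\Omega|\mathfrak L_k(\Omega) \leq (\lambda(\Circle)-\delta)\,k+o(k)$ for some $\delta>0$; this immediately gives $\ell_k\geq c\,k$ for a positive constant $c$, contradicting $\ell_k=o(k)$.

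I expect the main obstacle to be the production of that sharpened upper bound. The direct hexagonal tiling estimate \eqref{chapBH.upperboundhexa} only gives $|\Omega|\mathfrak L_k/k\leq\lambda(\hexagon)+o(1)$, and unfortunately $\lambda(\hexagon)>\lambda(\Circle)$ by Faber--Krahn, so this is not sufficient. The way forward is to sharpen Faber--Krahn on the regular cells themselves: a packing of $\Omega$ by $k-C\ell_k$ essentially disjoint cells cannot consist of near-disks, so most cells must carry a deficit inequality $|D_i|\lambda(D_i)\geq\lambda(\Circle)+\delta$ in the spirit of the quantitative Faber--Krahn improvements of Bourgain \cite{MR3340367}, Steinerberger \cite{MR3272823}, or Donnelly \cite{MR3205801}. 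Turning these deficit estimates into a uniform gain for the cells of a minimal partition---while keeping track of the geometric constraints imposed by the pair compatibility condition and the regularity properties of Theorem~\ref{chapBH.thstrreg}---is the delicate technical step that completes the proof.
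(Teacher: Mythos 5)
Your reduction to the inequality $(k - C\ell_k)\,\lambda(\Circle) \leq |\Omega|\,\mathfrak L_k(\Omega)$ is not wrong, but it is weaker than what is already available: a minimal $k$-partition is a spectral equipartition ($\lambda(D_i)=\mathfrak L_k(\Omega)$ for every $i$, see Subsection~\ref{chapBH.ss4.4}), so Faber--Krahn summed over \emph{all} cells gives $k\,\lambda(\Circle)\leq|\Omega|\,\mathfrak L_k(\Omega)$ with no gauge argument and no loss of $C\ell_k$. The fatal problem is your concluding step: the upper bound $|\Omega|\,\mathfrak L_k(\Omega)\leq(\lambda(\Circle)-\delta)k+o(k)$ that you need cannot hold for any $\delta>0$, precisely because of the Faber--Krahn lower bound you have just invoked. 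Your proposed repair --- quantitative Faber--Krahn deficits on the cells --- only pushes the \emph{lower} bound on $|\Omega|\mathfrak L_k(\Omega)/k$ upward and so makes the required upper bound even more unattainable; moreover nothing in that scheme couples the deficit to $\ell_k$, so even granting the hexagonal conjecture you would extract no information on $\sharp X^{\sf odd}(\partial\mathcal D_k)$. (A smaller issue: ``at most $C\ell_k$ polar cells for a universal $C$'' is unjustified, since $\nu({\bf p})$ is not universally bounded; but this point is moot.)

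The idea you are missing is that the comparison must be made with the \emph{spectral counting function of the Aharonov--Bohm operator}, not with an upper bound on $\mathfrak L_k(\Omega)$. By Theorem~\ref{chapBH.thchar}, $\mathfrak L_k(\Omega)$ is the $k$-th eigenvalue of $H^{AB}(\dot\Omega_{{\bf P}_k})$ with ${\bf P}_k=X^{\sf odd}(\partial\mathcal D_k)$, so $k$ exceeds the number $N^{AB}\bigl(\mathfrak L_k(\Omega)\bigr)$ of AB eigenvalues below $\mathfrak L_k(\Omega)$. The paper's proof (following \cite{MR3381015,MR3639652}) establishes a lower bound of Weyl type, $N^{AB}(\lambda)\geq\frac{|\Omega|}{4\pi}\lambda-R(\lambda,\ell)$, with a remainder controlled by the number $\ell$ of poles uniformly in their position. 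Combining $k\gtrsim\frac{|\Omega|}{4\pi}\mathfrak L_k(\Omega)-C\ell_k$ with Faber--Krahn $|\Omega|\mathfrak L_k(\Omega)\geq k\,\lambda(\Circle)$ gives $C\ell_k\geq k\bigl(\frac{\lambda(\Circle)}{4\pi}-1\bigr)+o(k)$, and the strict Pleijel inequality $\lambda(\Circle)>4\pi$ then delivers the linear lower bound. So the positive constant comes from the Pleijel gap, and $\ell_k$ enters through the remainder in the AB Weyl law --- neither mechanism appears in your argument.
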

Although inspired by the proof of Pleijel's theorem, this proof
includes (for any $k$) a lower bound in the Weyl's formula for the eigenvalue $\mathfrak L_k$ of the Aharonov-Bohm operator $ H^{AB}(\dot{\Omega}_{\bf P}) $ associated with the odd singular points of $\mathcal D_k$. The proof gives an explicit but very small lower bound in \eqref{chapBH.eq.liminfk} which is independent of the sequence.
This is to compare with the upper bound \eqref{chapBH.eulergrandk} which gives
$$\limsup_{k\to\infty}\frac{ \sharp X^{\sf odd}(\partial \mathcal D_k )}k \leq 2\,.$$
\begin{remark}
The hexagonal conjecture in the case of a compact Riemannian manifold is the same. We refer to \cite{MR3151084} for the details, the idea being that, for $k$ large,  the local structure of the manifold plays the main role, like for Pleijel's formula (see \cite{MR690651}).
In \cite{MR3421911} the authors analyze numerically the validity of the hexagonal conjecture in the case of the sphere for $\mathfrak L_{k,1}$. Using Euler's formula, one can conjecture that there are $(k-12)$ hexagons and $12$ pentagons for $k$ large enough. In the case of a planar domain one expects curvilinear hexagons inside $\Omega$, around $\sqrt{k}$ pentagons close to the boundary (see \cite{MR2566585}) and a few number of other polygons. Let us mention also the recent related results of D.~Bucur and collaborators \cite{1707.00605,1703.05383}.
\end{remark}
\section{Open problems}
We finish this survey by recalling other  open problems.

\subsection{Problems related to Pleijel's theorem}
Let us come back to the strong version of Pleijel's theorem (see Subsection \ref{chapBH.ss2.5}). 
The relation of the number of nodal domains and the properties of the Pleijel constant 
$$Pl(\Omega):=\lim\sup_{n\to \infty}\frac{\mu(\varphi_n)}{n},$$ 
appearing in \eqref{chapBH.bourgain3} is still very mysterious. Here $\Omega\subset {\mathbb R}^d$ is a bounded domain and we consider a Dirichlet Laplacian. But, as observed in Remark \ref{RemPl}  one could  also consider the analog problems for Schr\"odinger operators. We mention some natural questions, but some of them might seem rather ridiculous, we just  want to demonstrate how little is understood. 
\begin{enumerate}
\item Find some bounded domains $\Omega\subset {\mathbb R}^d$ so that one can work out the Pleijel constant $Pl(\Omega)$  explicitly. For rectangles $\mathcal R(a,b)$ with sidelengths $a, b$ with $\frac{a^2}{b^2}$ irrational it is known that \break  $Pl(\Omega)=\frac{2}{\pi}$, see {\it e.g.} \cite{MR3381015}. Based on numerical work \cite{PhysRevLett.88.114101}, Polterovich \cite{MR2457442} moreover conjectured that $Pl(\Omega)\le \frac{2}{\pi}$. For the harmonic oscillator $ H^{\sf osc}=-\Delta+\sum_{j=1}^d a_j^2x_j^2$ on ${\mathbb R}^d$ with $a_1,\ldots,a_{d}$ rationally independent, Charron \cite{1512.07880} showed that $\lim\sup_{n\to \infty}\frac{\mu(\varphi_n)}{n}=\frac{d!}{d^d}$.

\item It is not known whether $Pl(\Omega) >0$ always holds and even whether $\lim \sup_{n\to \infty} \mu(\varphi_n)=+\infty $ in general. Take a bounded domain $\Omega\subset {\mathbb R}^2$. If we consider an example where we can work out the eigenvalues $\lambda_n$ and the associated eigenfunctions $\varphi_n$ explicitely then we have always that $Pl(\Omega)>0$. In general almost nothing is known. There are some very subtle families of manifolds for which it has been shown that $\lim\sup_{n\to \infty}\mu(\varphi_n)=+\infty\,$, see \cite{MR3102912,1510.02963,MR3466853,MR3584194}. But for membranes 
this question is wide open. \\
Here is a simple problem: \\
{\it Prove or disprove that there exists an integer $K$ such that,  for all bounded (perhaps simply connected) domains,  there exists an eigenfunction $\varphi_{k}$ associated with $\lambda_{k}$ with $k\leq K$ and $\mu(\varphi_k)\ge 3$. }\\
A related problem is the following:\\
\centerline{\it  Find $\Omega\subset {\mathbb R}^2$ so that $\mu(\varphi_k)=2$ for $1<k \le 5\,$.}\\
 It is not at all clear that such a membrane exists. There are examples where $\lambda_2$ has multiplicity 3, in the case of the sphere $\mathbb S^2$ for instance, but also  for not simply connected domains  in $\mathbb R^2$ (see the paper by M. Hoffmann-Ostenhof, T. Hoffmann-Ostenhof and N. Nadirashvili \cite{MR1605269} on the nodal line conjecture).
 
For the higher dimensional case, Colin de Verdi\`ere \cite{MR932800} has  shown that one can have arbitrarily high multiplicity of the second eigenvalue for certain Riemannian eigenvalue problems. 
\item Take any $\Omega$ (with Dirichlet or Neumann boundary condition) and consider 
\begin{equation}
\mathfrak \mathcal \mathfrak N(\Omega)=\{k\in {\mathbb N} \;:\; \exists \text{ an eigenfunction } \varphi \text{ with }\mu(\varphi)=k\}.
\end{equation} 
For problems where one can work out the eigenvalues and the corresponding eigenfunctions explicitly,  we have usually $\mathfrak N(\Omega)={\mathbb N}$. For the circle $\mathbb S^1$ we have $\mathfrak N=\{1\}\cup 2{\mathbb N}$. This is also the case for the torus $T(a,b)$ where $T(a,b)$ is the rectangle $\mathcal R(a,b)$ with periodic boundary condition and with $(a/b)^2$ irrational. If this assumption $a,b$ does not hold then  there are examples of $\mu(\varphi)=3$, see \cite{1504.03944}.
Here comes a problem:\\
\centerline{\it  Prove or disprove that for bounded   $\Omega\subset {\mathbb R}^2$  either $\mathfrak N ={\mathbb N}$ or $\mathfrak N =\{1\}\cup 2 {\mathbb N}\,$. }\\
For instance can it happen that there is a domain so that no eigenfunction has four nodal domains? Analog questions can be also asked for the $d$-dimensional case, Schr\"odinger operators and for Laplace Beltrami operators on bounded manifolds.
\end{enumerate}

\subsection{Open problems on minimal  partitions} 
We do not come back to the Mercedes star conjecture for the disk, which was discussed in Subsection~\ref{chapBH.sexamples} but there are related problems for which we have natural guesses for minimal $3$-partitions and, more generally minimal $k$-partitions. Consider the equilateral triangle and the regular hexagon. In both cases the boundary set of the $3$-partition should consist of three straight segments which start in the middle of the side and meet at the center with the angle $2\pi/3$. For the regular hexagon it is similar, the three straight segments start from the middle of three sides which do not neighbor each other and meet in the center of the hexagon.  
A related natural guess is available for the non-nodal minimal  spectral $5$-partition for the disk. There one expects that the minimal partition created by five segments which start form the origin and meet there  with angle $2\pi/5$ (see Figure~\ref{chapBH.fig.MS}). For all those examples there is strong numerical evidence, see \cite{1612.07296,1702.01567}. 
We now mention most of  the cases of non-nodal minimal spectral partitions for which the topology of minimal  $k$-partition  
is known. The simplest case is $\mathbb S^1$. Here we know everything: all minimal $2k$-partitions are nodal and the
minimal $(2k+1)$-partitions just are given by $(0,\frac{2\pi}{2k+1})$ and its rotations by multiples of $\frac{2\pi}{2k+1}$. 
To see this just go to the double covering and look at $(0,4\pi)$ with periodic boundary conditions, or consider  $(D_{x} - \frac 12)^2$ on $(0,2\pi)$ with periodic condition.\\
We consider the Laplacian operator on the rectangle 
$\mathcal R(1,b)=(0,1)\times (0,b)$. 
Two cases will be mentioned.\\ 
First we take periodic boundary conditions  on the interval (0,1) by identifying $x_{1}=0$ with $x_{1}=1$. For $x_{2}=0$ and $x_{2}=b$ we
take Neumann boundary conditions. The spectrum of this operator $H^{{\sf per},N}$ is 
\begin{equation*} 
\sigma(H^{{\sf per},N}) =\Big\{\pi^2(4m^2+\frac{n^2}{b^2}), {(m,n)\in \mathbb Z \times \mathbb N }\Big\}. 
\end{equation*}
Since $\lambda_1=0$ and $\lambda_2=\lambda_3$ for $b<1/2$, $\mathfrak L_3$ is associated to a non-nodal partition. If we let $b>0$ sufficiently small we are "near" the case of $\mathbb S^1$. One then can go to the double covering as for the circle and consider the eigenvalues.  For $b\le (2\sqrt 5)^{-1}$, the minimal 3-partition is then given by $D_1=(0,1/3)\times (0,b),\; D_2=(1/3,2/3)\times (0,b), D_3=(2/3,1)\times (0,b)$, see \cite{MR3050161}. It was only possible with Neumann boundary conditions. For Dirichlet one would expect also such a minimal partition. For larger odd $k$ one gets similar results (requiring smaller $b$). Hence the problem is: \\
\centerline{\it Prove such a result with Dirichlet boundary conditions and improve the conditions on the $b's$.}\\

Next we consider the torus \cite{MR3232810, MR3684573}. Equivalently we consider periodic boundary conditions on the intervals $(0,1)$ and $(0,b)$. Again one can expect, for small $b$, a situation "near" to the circle. Things are more involved but again one can reduce the problem to spectral problems on suitable coverings. There are many nice problems coming up here  which are discussed in  \cite{MR3684573}.
\\

Finally we come back to $\mathbb S^2$ (see Section \ref{chapBH.ss6.3}). Since only $\lambda_1(\mathbb S^2)$ and $\lambda_2(\mathbb S^2)$ are Courant sharp, any $\mathfrak L_k(\mathbb S^2)$ corresponds to a non-nodal minimal partition for $k>2$. In \cite{MR2664708} it was shown (see Figure~\ref{chapBH.fig.3partsphere}) that, up to rotation, the minimal $3$-partition is unique and  that  its  boundary  set consists of three half great circles which connect the north-pole and the 
south-pole and meet each other with angles $2\pi/3$. 
\begin{figure}[h!t]
\begin{center}
\includegraphics[width=2.5cm]{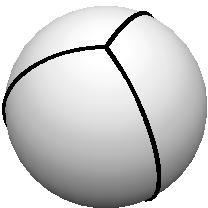}
\caption{Minimal $3$-partition of the sphere.\label{chapBH.fig.3partsphere}}
\end{center}
\end{figure}
There is an open question in harmonic analysis known as the Bishop conjecture (solved for $1$-minimal $2$-partitions) \cite{MR1155854}:\\
\centerline{\it Show that the minimal  $3$-partition  of $\mathbb S^2$ is a $1$-minimal $3$-partition.}\\

In \cite{MR2664708} possible candidates for minimal $4$-partitions were discussed and one natural guess is the regular tetrahedron. More generally, one might ask  the question whether platonic solids  (corners  on the sphere connected by segments of great circles) are possible candidates. First we note that the octahedron would lead to a bipartite partition hence the minimal $8$-partition must be something else. But the tetrahedron, the cube, the dodecahedron and the icosahedron would be possible candidates. There is a related isoperimetric problem whose relation to spectral minimal partitions might be interesting. The problem  is to find the least perimeter partition of $\mathbb S^2$ into $k$ regions of equal area. Up to now the cases $k=3,4,12$, \cite{MR1460978, MR2679060, Hales}, have been solved. For $k=3$ the perimeter partition is also the spectral minimal 3-partition. We  can then propose the following problem: \\
{\it Prove or disprove that for $k=4,6,12,20$ (each case would be interesting) that the minimal k-partitions correspond to the platonic solids.}

\begin{remark}  
 Let us also  mention that if the nodal line conjecture holds (see \cite{MR1605269} and references therein), say for simply connected domains, this would imply a lot on the geometry of minimal spectral partitions.    
\end{remark} 

Finally,  we want to mention some very interesting recent results by Smilansky and collaborators, \cite{MR3000497} for the membrane case in $\Omega\subset \mathbb R^d$ and \cite{MR2909765} for the discrete case. We just indicate their results. They consider the generic case, simple eigenvalues and eigenfunctions which have no higher order zeros, and investigate for the $k$-th eigenfunction $\varphi_k$ the {\em nodal deficiency} $\;d_k =k-\mu(\varphi_k)$. They define a functional, somehow a bit in the spirit of Equation \eqref{chapBH.eq.Lkdef} and show that the critical point of this functional corresponds to a nodal partition. Moreover the Morse index of the critical point turns out to be the deficiency index. It would be very interesting to investigate whether this approach can be extended to the non-generic case.

\bigskip {\noindent\bf Acknowledgements.}
{\small We would like to thank particularly our  collaborators  P. B\'erard, B. Bogosel, P. Charron, C.~L\'ena, B. Noris, M. Nys, M. Persson Sundqvist, S. Terracini, and G. Vial. 
We would also like to thank D. Bucur, T. Deheuvels, E. Harrell, A. Henrot, D.~Jakobson,  J. Lamboley, J. Leydold,  \'E. Oudet, M. Pierre, I. Polterovich, T. Ranner for their interest, their help and for useful discussions. \\
}

\bibliographystyle{abbrv_links}

\def\cprime{$'$} \def\cprime{$'$}

\end{document}